\newtheorem{thm}{Theorem}[section]
\newtheorem{lem}[thm]{Lemma}
\newtheorem{prop}[thm]{Proposition}
\theoremstyle{definition}
\newtheorem{defn}[thm]{Definition}%[chapter]
\theoremstyle{remark}
\newtheorem{rem}{Remark}%[chapter]
\numberwithin{equation}{section}
\font\nt=cmr7
\def\note#1
\def\be{\begin{equation}}
\def\ee{\end{equation}}
\newcommand{\ra}{\rightarrow}
\newcommand{\dist}{\operatorname{dist}}
\renewcommand{\AA}{{\cal A}}
\newcommand{\MM}{{\cal M}}
\newcommand{\PP}{{\cal P}}
\renewcommand{\SS}{{\cal S}}
\newcommand{\C}{{\mathbb C}}
\newcommand{\N}{{\mathbb N}}
\newcommand{\R}{{\mathbb R}}
\newcommand{\Z}{{\mathbb Z}}
\def\B0{{\mathbf{0}}}
\renewcommand{\Im}{\operatorname{Im}}
\newcommand{\ov}{\overline}
\renewcommand{\ra}{\rightarrow}
\newcommand{\ctzero}{{\tilde{c_0}}}
\newcommand{\czero}{{c_0}}
\newcommand{\Suno}{{\mathbb{S}^1}}
\def\Empty{}
\newcommand\oplabel[1]{
  \def\OpArg{#1} \ifx \OpArg\Empty {} \else
  	\label{#1}
  \fi}
\newcommand{\comm}[1]{}
\newcommand{\comment}[1]{}
\newcommand{\Wij}{\widehat{W}_{i,j}}
\begin{document} 

 \title{Triviality of fibers for Misiurewicz parameters in the exponential family}
  \author{Anna Miriam Benini}
%\bigskip\bigskip
\maketitle

\begin{abstract}  
We consider the family of holomorphic maps $e^z+c$ and show that fibers of postsingularly finite parameters are trivial. This can be considered as the first and simplest class of non-escaping parameters for which we can obtain results about triviality of fibers in the exponential family.
\end{abstract} 

\section{Introduction}
Among transcendental  functions, the complex exponential family $e^z+c$ is one of the most widely studied examples, because there is only one singular value at $c$ and because in some respects it can be seen as combinatorial limit of unicritical polynomials of degree $d$ (See e.g. \cite{D}).

As usual in one-dimensional complex dynamics, if $f:\C\ra\C$ is a holomorphic function,  investigation of the dynamics of $f$ starts with the definition of the Fatou set
\[F(f):=\left\{z\in\C, \{f^n\}\text{ is a normal family in a neighborhood of }z\right\}\]
and of the Julia set $J(f)$ as the complement of the Fatou set.
 
%We will assume the reader to be familiar with the topological dynamics of unicritical polynomials.

The foundational work in the study of the exponential family has been set by Baker and Rippon \cite{BR}  and by Eremenko and Lyubich \cite{EL}; among other properties, they prove (for exponential maps and for more general entire functions respectively) that there are no wandering Fatou components for $e^z+c$. As a corollary, if there are no attracting orbits, parabolic orbits or Siegel disks, the Julia set is equal to $\C$. 

We will make use of the combinatorial structure of dynamical and parameter plane  worked out by Schleicher, Zimmer and Rempe; however, many other aspects have been investigated, among others, by Karpinska, Urbanski, Devaney and coauthors. For a review of exponential dynamics and a more complete set of references see for example \cite{R0}.

As a parameter analogy to the Fatou set, given a one-dimensional family of holomorphic maps $\{g_c\}_{c\in\C}$  we can define the \emph{set of structurally stable parameters}

\begin{small}
\[ S= \{c_0\in\C, g_{c_0} \text{ is topologically conjugate to }g_c \text{ $\forall c$ in a neighborhood of }c_0\}.\]
\end{small}
By a theorem of Eremenko and Lyubich (\cite{EL}, Theorem 10) $S$ is open and dense in $\C$ for the family $e^z+c$.

The complement of $S$ is called the \emph{bifurcation locus}.
 
One of the central problems for one-parameter families in one dimensional complex dynamics is to establish whether  each   function $g_c$ which is \emph{structurally stable} is also \emph{hyperbolic}, i.e. has an attracting cycle (see Section \ref{Fibers and Rigidity} for a wider discussion).
Components of $S$ which do not contain hyperbolic parameters are called \emph{non-hyperbolic}.

The families of functions for which we have the most results about this conjecture are the families of unicritical polynomials of degree $D$ usually parametrized as $P_c^D(z)=z^D+c$. 
 Most proofs of this kind of results involve a construction called Yoccoz puzzle and estimates on the modulus of the annuli between puzzle pieces; in the  exponential family most dynamically arising objects including puzzle pieces are unbounded, so that the corresponding annuli are degenerate at infinity breaking down the general strategy of the proofs.
 
A different way of approaching this problem is studying combinatorial properties (see Section \ref{Combinatorics and Ray Portraits}) of maps which are topologically conjugate. In particular the structure of periodic dynamic rays landing together (see Section \ref{Dynamic and parameter rays} for definition of rays, and Section \ref{Combinatorics and Ray Portraits} for a discussion about rays landing together) is a topological invariant. Dynamic rays are labeled by sequences in $\Z^\N$ called \emph{addresses} (see again Section \ref{Dynamic and parameter rays}), which are the analog of angles for polynomials dynamic rays.
 
In this setting, it is possible to state that for some specific parameters the structure of periodic dynamic rays landing together (the \emph{combinatorics}) characterizes them uniquely, so that they cannot be topologically conjugate to any other parameter nor be on the boundary of a component of $S$ which is not hyperbolic; we usually refer to this kind of results as \emph{rigidity} results. This brings to the definition of \emph{parameter fibers} as set of parameters which all have the same combinatorics (see Section \ref{Fibers and Rigidity}).  By the theory of parabolic bifurcations from \cite{RS2}, for two parameters having the same combinatorics is equivalent to not being separated by any pair of periodic  parameter rays landing together.
 
 This paper investigates the properties of parameter fibers of parameters which are \textsl{postsingularly finite},  i.e those parameters $c$ for which the set of forward images of the singular value $c$ is finite under iteration of $f_c(z)= e^z+c$. These parameters are commonly called \emph{Misiurewicz parameters} in holomorphic dynamics.
 
 Our main result is stated as follows:
\begin{thm}\label{Triviality of fibers}
 Parameter fibers of Misiurewicz parameters  are trivial, i.e. given any Misiurewicz parameter $c_0$, for any other parameter $c$ which does not belong to one of the finitely many parameter rays landing at $c_0$ there is a pair of parameter rays with periodic addresses, landing together at a parabolic parameter, which separate $c$ from $c_0$.
 \end{thm}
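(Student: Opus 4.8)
The plan is to translate everything into the dynamical plane of $f_{c_0}$ and then transport separations back to parameter space through the parabolic-bifurcation correspondence of \cite{RS2}. First I would record the basic dynamical picture: since $c_0$ is Misiurewicz, the orbit of the singular value $c_0$ is finite and, having used up the only singular value, must land on a \emph{repelling} cycle; write $z_\ast=f_{c_0}^{\,l}(c_0)$ for a point of that cycle. Repelling periodic points are landing points of finitely many periodic dynamic rays, so pulling these rays back along the orbit segment $c_0\mapsto z_\ast$ exhibits $c_0$ as the landing point of finitely many \emph{preperiodic} dynamic rays. Their addresses are precisely the combinatorial data attached to $c_0$, and the parameter rays carrying the same addresses are exactly the finitely many parameter rays landing at $c_0$ in the statement.

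Next I would reformulate the notion of fiber in terms of wakes. To a periodic ray pair $\Gamma$ --- two periodic dynamic rays of addresses $\underline{s},\underline{t}$ landing together at a common repelling periodic point --- the theory of \cite{RS2} attaches a parabolic parameter at which the parameter rays of addresses $\underline{s},\underline{t}$ land together, cutting off a parameter \emph{wake} $W_\Gamma$. A parameter $c$ lies in $W_\Gamma$ exactly when, in the dynamical plane of $f_c$, the two rays of $\Gamma$ still land together and the singular value $c$ lies in the dynamical sector $S_\Gamma$ they bound. Since two parameters have the same combinatorics iff they are not separated by any periodic parameter ray pair, the fiber of $c_0$ is the intersection of the closed wakes $\overline{W_\Gamma}$ containing $c_0$; and $c_0\in W_\Gamma$ iff, in the plane of $f_{c_0}$, the pair $\Gamma$ lands together and its sector $S_\Gamma$ contains the singular value $c_0$. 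Thus proving triviality reduces to the dynamical statement that the sectors $S_\Gamma$ whose closure contains the singular value shrink to the single point $c_0$.

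The heart of the argument is then a shrinking/expansion step carried out entirely in the dynamical plane. Because the postsingular set is finite and the cycle at $z_\ast$ is repelling, $f_{c_0}$ is expanding relative to its postsingular set, and $J(f_{c_0})=\C$. I would use density of repelling periodic points together with the landing of periodic rays to produce, for each $n$, a periodic ray pair $\Gamma_n$ landing at a periodic point near the orbit of $c_0$ and bounding a sector $S_{\Gamma_n}\ni c_0$ of controlled combinatorial depth; feeding these through the expanding structure near $z_\ast$ (and pulling back along the finite orbit to $c_0$) I expect to obtain a nested sequence of such sectors with $\diam S_{\Gamma_n}\to0$, hence $\bigcap_n \overline{S_{\Gamma_n}}=\{c_0\}$. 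Transporting this nesting to parameter space via the correspondence of the previous paragraph gives $\bigcap_n \overline{W_{\Gamma_n}}=\{c_0\}$ up to the rays landing at $c_0$; then for any $c\neq c_0$ not on those rays, $c\notin\overline{W_{\Gamma_N}}$ for some $N$, and the parameter ray pair of $\Gamma_N$ --- periodic, landing at a parabolic parameter --- separates $c$ from $c_0$, which is the assertion of the theorem.

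The main obstacle will be the unboundedness that is intrinsic to the exponential family: dynamic and parameter rays are unbounded curves escaping to $\infty$, the sectors $S_\Gamma$ are unbounded, and there is no global expanding metric on all of $\C$ --- expansion is available only relative to the postsingular set, and the essential singularity at $\infty$ means infinitely many rays accumulate there. Making the phrase \lq the sectors shrink to $c_0$\rq\ precise enough to exclude a fixed competitor $c\neq c_0$, while simultaneously controlling the behaviour of the ray pairs near infinity and their stability under perturbation, is exactly where the polynomial Yoccoz-type machinery breaks down and where the real work of the proof must lie; establishing the dynamic-to-parameter correspondence in this non-compact setting, with uniform landing of the relevant periodic rays, is its technical core.
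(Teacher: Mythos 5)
Your overall skeleton is aligned with the paper's: the wake/sector correspondence you invoke is its Proposition~\ref{Correspondence of bifurcations}, and the contraction near the repelling postsingular cycle is exactly how it proves Propositions~\ref{Triviality in dynamical plane} and \ref{Persistence of dynamical triviality} (inverse branch $\psi$ of $f^{-m}$ fixing the cycle, iterated on a linearizing neighborhood). But the engine of your argument has a genuine gap: you need, for each $n$, a \emph{ray pair} $\Gamma_n$ --- two periodic rays with prescribed addresses that actually land at the \emph{same} point --- bounding a sector containing $c_0$, and this does not follow from density of repelling periodic points plus the landing theorem. Those facts tell you that every periodic ray lands and that periodic points are dense, but not \emph{which} rays co-land; the co-landing pattern is precisely the combinatorial content at stake, and no Yoccoz-type or expansion argument in the proposal produces it. The paper fills this hole by a route you don't have: it transfers Schleicher's combinatorial approximation lemma for unicritical polynomials (Lemma~\ref{Combinatorial approximation polys}, from \cite{S1}) to the exponential family via the embedding of $\SS_D$ into $\SS$, using the itinerary criterion of Lemma~\ref{Significance of dynamical partition for Misiurewicz parameters} (rays land together at a Misiurewicz parameter iff they share itineraries with respect to the dynamical partition, a criterion identical for exponentials and for polynomials of high degree) together with Theorem~\ref{polyexp Misiurewicz}. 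Without some such input, your sectors $S_{\Gamma_n}$ may simply fail to exist.

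The second gap is in the shrinking-and-transfer step, which you yourself flag as \lq\lq where the real work must lie\rq\rq. As stated, $\diam S_{\Gamma_n}\to 0$ is false --- the sectors are unbounded --- and the paper proves only a local version: inside a linearizing neighborhood $L$, the region cut off by the sector boundary, $\partial L$, and an approximating ray pair shrinks to the periodic point under $\psi^n$. More seriously, $\bigcap_n \overline{W_{\Gamma_n}}=\{c_0\}$ cannot be obtained by simply \lq\lq transporting the nesting\rq\rq: a generic parameter $c$ in the bifurcation locus carries no parameter ray, so its position relative to $\overline{W_{\Gamma_N}}$ cannot be read off from dynamical data at $c_0$ alone. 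In the proof of Theorem~\ref{Triviality of Misiurewicz fibers} the paper needs two further devices absent from your plan: for the internal sectors it works in the dynamical plane of $c$ itself (not of $c_0$), separating $c$ from the analytic continuation $\tilde z(c)$ of the postsingular orbit by a ray pair that persists over a neighborhood of $c$; and it then uses density of escaping parameters in the bifurcation locus, together with the vertical order of rays at infinity, to conclude that a fixed parameter ray pair with addresses between the approximating ones and the $s_i$ separates $c$ from $c_0$. So the proposal is a reasonable outline whose two decisive ingredients --- the existence of combinatorially approximating co-landing pairs, and the dynamical-to-parameter transfer for parameters not on rays --- are missing, and both are supplied in the paper by mechanisms (polynomial combinatorics via the correspondence, and escaping-parameter density) that your expansion-based plan does not provide.
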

  We will devote the first section to a collection of relevant results about existence and landing properties of dynamic and parameter rays for the exponential family. The second section will introduce Misiurewicz parameters and their combinatorial properties, followed by a section on orbit portraits where we will prove some explicit theorems about the correspondence of orbit portraits between exponentials and polynomials. After that we will give a short introduction to fibers and rigidity, and in the last section we will present the statement and the proof of Theorem \ref{Triviality of fibers}.
  
For the exponential family, we will refer as $\Pi_P$ to the parameter plane and as $\Pi_c$ to the dynamical plane for the parameter $c$.

We will indicate by $P_D$ the family of unicritical polynomials of degree $D$, call $\Pi_P^D$ their parameter plane and $\Pi_c^D$ the dynamical plane with the dynamics given by $P^D_c(z)=z^D+c$.

Many thanks are due to Mikhail Lyubich and Dierk Schleicher for suggesting this problem, and to Lasse Rempe, Dierk Schleicher and especially Mikhail Lyubich for helpful discussions on the subject. 
 The pictures have been drawn with the program \emph{It} written by Christian Mannes.

%%%%%%%%%%%%%%%%%%%%%%%%%%%%%%%%%%%%%%%%%%%%%%%%%%%%%%%%%%%%%%%%%%%%%%%%%%%
\section{Dynamic rays and parameter rays}\label{Dynamic and parameter rays}
This section has the purpose of recollecting some of the relevant results about rays and their landing properties.
We will briefly introduce dynamical rays (originally called external rays) and parameter rays for unicritical polynomials first and for exponential maps then; subsequently we will present some of the properties that we need. 
We will use the same notation for exponentials and polynomials, in order to make it easier to state some theorems in parallel.

Dynamical and parameter rays for polynomials are a very classical topic in complex dynamics; excellent references are \cite{M}, Chapter 18, for dynamic rays;  \cite{CG}, Chapter 8 and \cite{PR} for parameter rays. 

Let  $P^D_c(z)=z^D+c$ be a unicritical polynomial. This can be seen as a map from $\hat{\C}$ to $\hat{\C}$ with  infinity being a superattracting fixed point. By B\"ottcher's theorem, there is a holomorphic function $B_c$, tangent to the identity at infinity, conjugating the dynamics of $P^D_c(z)$ in a neighborhood of $\infty$ containing the critical  value $c$ to the dynamics of $P^D_0$ (also in a neighborhood of infinity). 

For $s\in \Suno$, call $R_s:=\{z\in \C, z=r e^{i s}, r>1\}$ the straight ray of angle $s$, and  let $s_c\in\Suno$ be the angle such that $B_c(c)\in R_{s_c}$.  If $s$ is written in $D$-iadic  expansion, and $\sigma$ denotes  the shift map, the dynamics of $P^D_0$ carries $R_s$ to $R_{\sigma s}$.

Using the functional equation, $B_c$ can be extended to a conjugacy $\widetilde{B}_c$ defined on $\C$ minus the preimages under iterates of $P^D_c$ of the set $B^{-1}_c( R_{s_c})$.
Given a straight ray $R_s$ such that $\sigma^k(s)\neq s_c$ for any $k\in\N$, the set \[g^c_s:=\widetilde{B}^{-1}_c (R_s)\] is well defined curve and is called the \emph{dynamic ray}  of angle $s$ (for the polynomial $P^D_c$). It can be parameterized by a parameter $t$ called \emph{potential} so that 
\[P^D_c(g^c_s)(t)=g^c_{\sigma s}(D t).\]

For $s\in\Suno$, the set $G_s:=\{c\in\C, c\in g^c_s \}$ can be shown to be a simple curve and is called the \emph{parameter ray} of angle $s$.

Now let us define dynamic and parameter rays for the exponential family.
We will use throughout the paper the concept of itinerary with respect to a partition:
\begin{defn}
Let $f:\C\ra\C$ be a function, and $\MM= \{ M_{a_i} \}_{a_i\in \AA}$ be a countable collection of pairwise disconnected domains of $\C$ such that each domain is labeled uniquely by a letter in a countable alphabet $\AA$. 
If $f^j(z)\in\underset{a_i\in\AA}\cup M_{a_i}$ for all $j\in\N\cup\{0\}$, then  we say that the \emph{itinerary} of $z$ (with respect to $\MM$) is  the sequence $a=a_1...a_n...$ defined by  $f^j(z)\in M_{a_j}$.
%
%Then  whenever the iterates $f^j(z)$ are contained in $\underset{a\in\AA}\cup M_a$ for each $j$ we will say that the \emph{itinerary} of $z$ is  the sequence $a=a_1...a_n...$ of symbols of $\AA$ such that $f^j(z)\in M_{a_j}$. %
%í
\end{defn}
Dynamic rays for the exponential family (\cite{DK},\cite{SZ3}) have been introduced in analogy with the polynomial case in order to construct symbolic dynamics on the set of \emph{escaping points} 
\[I(f_c):=\{z\in\C, |f_c^n(z)| \rightarrow \infty \text{ as } n\ra\infty\}\subset \Pi_c.\]

Let
\[ S_j:=\{z\in \C, (2j -1)\pi<\Im z<(2j +1)\pi\},\]
 and consider itineraries of points with respect to this partition, i.e. \[\text{itin}(z)=s_1 s_2\dots\text{ if and only if }f^j(z)\in S_{s_j}\] for points whose iterates never belong to the boundaries of the partition. 
 
% For points whose $j$th iterate belongs to the boundary separating two strips $S_{s_j}$ and $S_{s_{j+1}}$ the corresponding  entry in the itinerary will be defined as the boundary symbol $\binom{s_j}{s_{j+1}}$.
% For a point $z$ whose itinerary with respect to this partition contains no boundary symbols 
% 
 
For any such point $z$  we will refer to its itinerary with respect to the strips $S_j$ as the \emph{ address}\footnote{This is often referred to as \emph{external address} to distinguish it from other kinds of addresses. However, this is  the only kind of address we deal with in this paper, so the term \emph{address} will not create ambiguity.} of $z$.

%It would be tempting to define the ray of address $s=s_1 s_2\dots$ as the set of points whose itinerary is $s$, but this would introduce the unnatural condition that rays cannot cross the boundaries of the partition creating disconnected sets. The actual construction uses this guideline to first define rays for large real part and then use inverse dynamics to extend them
%to their maximal length choosing branches of $f^{-1}$ in order to preserve continuity of the curve.

Using the described construction itineraries of points cannot have entries  growing faster than iterates of the  exponential function. This  leads to the following notion.
\begin{defn}
A sequence $s=s_1 s_2\dots$ is called \emph{exponentially bounded}  if there exists $ x\in \R$ such that $ | 2\pi s_j|<F^j(x)$
for all $j\in \N$,  where $F:\R \mapsto \R$ is the growth function  $F:t\mapsto e^t-1$.
\end{defn} 
This growth condition turns out to be not only necessary but also sufficient \cite{SZ2}, so that  all sequences $s$ contained in the set 
 \[\SS:=\{s\in \Z^{\N}, s \text{ is exponentially bounded}  \}\]
 are realized as itineraries of some point $z$.
 
 We will say that an address is \emph{periodic} if it is a periodic sequence,\emph{ preperiodic} if it is a strictly preperiodic sequence and \emph{(pre)periodic} if it is either periodic or strictly preperiodic. 
 
 The set $\SS$ has a natural order induced by the usual order relation on the space of sequences over an ordered set. 
 
 If $s=s_1 s_2\dots,$ we will say that $|s|=\underset{i}\sup |s_i|$.

 Given a an external address $s=s_1 s_2...$ we will define its \emph{minimal potential} 
\[t_s:=\inf\left\{t>0,\lim\underset{k\geq1}\sup \frac{|s_k|}{F^{k}(t)}=0  \right\}.\]

Definition, existence and properties of \emph{dynamic rays} for the exponential family are summarized in  the following theorem (\cite{SZ2}, Proposition 3.2 and Theorem 4.2):
\begin{thm}{\bf Existence of dynamic rays.}\label{Existence of dynamic rays}
Let $c$ be a parameter such that $|f^n_c(c)|$ does not tend to infinity as $n\ra \infty$; then 
for any $s\in \SS$ there exists a unique injective curve $g^c_s: (t_s,\infty) \rightarrow \C$ consisting of escaping points such that 
\begin{itemize}
\item
$g^c_s (t)$ has  address $s$ for sufficiently large $t$;
\item
$f_c (g^c_s(t))=g^c_{\sigma s}(F(t)) $;
\item
We have the asymptotics 
$g^c_s(t)=2\pi i s_1 + t+ o(e^{-t})$ as $t\ra \infty$. 
\end{itemize}
\end{thm}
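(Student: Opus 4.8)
The plan is to realize each dynamic ray as a fixed point of a contraction built from the inverse branches of $f_c$, constructing first a well-controlled \emph{tail} at large potentials and then extending it downward by the functional equation. For $k\in\Z$ let $L_k(w):=\log(w-c)+2\pi i k$ denote the branch of $f_c^{-1}$ mapping into the strip $S_k$, with $\log$ the principal logarithm; this is well defined wherever $w\neq c$. The conjugacy relation $f_c(g^c_s(t))=g^c_{\sigma s}(F(t))$ that we must produce is equivalent, after applying the appropriate inverse branch, to the pull-back identity $g^c_s(t)=L_{s_1}\bigl(g^c_{\sigma s}(F(t))\bigr)$, and the whole construction is organized around this identity.

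First I would build the tail. Fix a large threshold and consider the space of families $\{z_s\}_{s\in\SS}$ of curves defined for potentials above that threshold, equipped with a weighted supremum metric adapted to the growth function $F$. On this space define the operator $T$ by $(Tz)_s(t):=L_{s_1}\bigl(z_{\sigma s}(F(t))\bigr)$, taking the naive horizontal lines $z_s(t)=2\pi i s_1+t$ as the initial guess. The crucial estimate is that $|L_k'(w)|=|w-c|^{-1}$ is small when $\Re w$ is large, while $F(t)=e^t-1$ is enormous compared with $t$; combining these shows that $T$ contracts distances by a factor decaying like $e^{-t}$. The exponential boundedness of $s$ is exactly what keeps $2\pi i s_1$ negligible against the real potential and guarantees that the images stay in the region where these estimates hold. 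The Banach fixed point theorem then yields existence and uniqueness of the tail, and tracking the contraction constant produces the asymptotics $g^c_s(t)=2\pi i s_1+t+o(e^{-t})$ as $t\ra\infty$.

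Next I would extend each ray down to its minimal potential. Since $F(t)>t$ for all $t>0$, the identity $g^c_s(t)=L_{s_1}\bigl(g^c_{\sigma s}(F(t))\bigr)$ expresses the ray at potential $t$ in terms of values at the strictly larger potential $F(t)$, so one extends downward by repeatedly pulling back, defining $g^c_s(t)$ through the already-constructed values at the potentials $F^k(t)$. This succeeds precisely as long as $F^k(t)$ grows fast enough to dominate $|s_k|$, and the borderline at which it fails is exactly the minimal potential $t_s$ singled out by the condition $\limsup_k |s_k|/F^k(t)=0$. Along the resulting curve the potentials $F^n(t)\to\infty$, so $\Re f_c^n(g^c_s(t))\to\infty$ and every point of the ray is escaping; injectivity follows because the asymptotics force $\Re g^c_s(t)$ to be strictly increasing on the tail while the functional equation, together with injectivity of $F$, propagates this downward. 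Uniqueness of the full ray reduces to uniqueness of the tail, since any two curves satisfying all three properties agree for large $t$ and hence, by the pull-back identity, everywhere on $(t_s,\infty)$.

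The main obstacle is the interplay between the unboundedness of the strips $S_k$ and the possibly unbounded growth of the address entries $s_k$: in contrast with the polynomial case, where angles range over a compact circle, here the contraction estimates must be weighted against $F$, and the definition of $t_s$ is calibrated to the exact threshold of convergence of the pull-back process. A second point requiring the hypothesis on $c$ is that the branches $L_{s_1}(w)=\log(w-c)+2\pi i s_1$ are singular at $w=c$; the assumption that $|f_c^n(c)|$ does not tend to infinity means $c$ is not an escaping point, so the rays, which consist entirely of escaping points, never meet $c$, and the inverse branches remain well defined and contracting along the whole of each ray.
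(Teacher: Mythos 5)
This theorem is not proved in the paper at all: it is quoted verbatim from Schleicher--Zimmer (\cite{SZ2}, Proposition 3.2 and Theorem 4.2), and your proposal reconstructs essentially the argument of that source. The pull-back identity $g^c_s(t)=L_{s_1}\bigl(g^c_{\sigma s}(F(t))\bigr)$, the contraction of the inverse branches at large real parts, the fixed-point construction of the tail with asymptotics $2\pi i s_1+t+o(e^{-t})$, the downward extension calibrated by the minimal potential $t_s$, and the observation that the non-escaping hypothesis on $c$ keeps the branches well defined along rays of escaping points are exactly the ingredients of the proof in \cite{SZ2}, so your sketch is correct and follows the same route.
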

Given  $s\in\SS$, we will call the unique curve $g_s^c$ given by Theorem \ref{Existence of dynamic rays} the \emph{dynamic ray of address $s$}.

Like for polynomials, a  dynamic ray $g_s$ is called \textsl{periodic} if $f^m_c(g_s)\subset g_s$ for some $m\in \N$, and is called \textsl{preperiodic} if $f^{m+k}_c(g_s)\subset f_c^k(g_s)$ for $m,k>1$. Also, a dynamic ray is (pre)periodic if and only if its address is (pre)periodic.

The question whether periodic dynamic rays land for the exponential family remained open for some time, and was finally solved by Rempe using the previously known fact that periodic rays land for hyperbolic parameters and an argument about persistence of landing inside wakes. This led to the following Theorem (\cite{R1}, Theorem 1):

\begin{thm}{\bf Landing theorem for periodic dynamic rays.}\label{Landing theorem for periodic dynamic rays}
Let $c$ be such that $|f^n_c(c)|$ does not tend to infinity as $n\ra \infty$.
Then every periodic dynamic ray $g^c_s$ lands at a repelling or parabolic periodic point; also every preperiodic dynamic ray lands at a preperiodic point.
\end{thm}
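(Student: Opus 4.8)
The plan is to reduce everything to the periodic case and then run an open--closed (persistence) argument in parameter space, using the two facts cited above. Fix a periodic address $s\in\SS$ of period $n$, so $\sigma^n s=s$; since $s$ is bounded one checks $t_s=0$, so $g^c_s$ is defined on $(0,\infty)$. Let $\Lambda:=\bigcap_{\tau>0}\overline{g^c_s((0,\tau])}\subset\hat{\C}$ be the accumulation set at the finite end. Because $F(t)>t$ for $t>0$ while $F(t)\to0$ as $t\to0^+$, the functional equation $f_c(g^c_s(t))=g^c_{\sigma s}(F(t))$ from \thmref{Existence of dynamic rays} gives $f_c^n(g^c_s(t))=g^c_s(F^n(t))$ with $F^n(t)\to0^+$ as $t\to0^+$; hence $\Lambda$ is a nonempty, compact, connected set that is forward invariant under $f_c^n$. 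The theorem amounts to showing $\Lambda$ is a single finite point, which is then automatically a periodic point of period dividing $n$; by the Snail Lemma applied to the invariant ray it must be repelling or parabolic. For a strictly preperiodic address $s$, $\sigma^m s$ is periodic for some minimal $m\geq1$ and $f_c^m(g^c_s)\subseteq g^c_{\sigma^m s}$; since $f_c$ has no critical points and the escaping rays avoid the non-escaping singular orbit, landing of the periodic ray $g^c_{\sigma^m s}$ pulls back through $f_c^m$, showing that $g^c_s$ lands at a preperiodic point. So it suffices to treat periodic $s$.

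For the base case I would take $c$ hyperbolic, i.e. possessing an attracting cycle; that periodic rays land in this case is the classical input referred to above. The mechanism is that $f_c$ carries an expanding conformal metric near its Julia set, so the standard hyperbolic contraction forces the invariant compact set $\Lambda$ to collapse to a single repelling periodic point. The decisive point that survives from the polynomial theory is that here $\Lambda$ is \emph{bounded}, i.e. $\infty\notin\Lambda$, the ray approaching the Cantor-bouquet Julia set transversally rather than escaping to infinity along its finite end.

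To reach a general non-escaping parameter $c_0$ I would locate it inside a wake $W$ of $\Pi_P$ --- the region bounded by two parameter rays with periodic addresses landing together at a parabolic parameter --- chosen so that $W$ meets a hyperbolic component in which the landing of $g^c_s$ is already known, and more generally realize $c_0$ as lying in a nested family of such wakes. Let $A$ be the set of non-escaping parameters $c\in W$ for which $g^c_s$ lands at a repelling or parabolic periodic point. Openness of $A$ follows from the holomorphic motion of the repelling landing point (implicit function theorem, its multiplier staying off the unit circle) together with continuous dependence of $g^c_s$ on $c$ up to its endpoint, the two motions remaining matched. Closedness is precisely the \emph{persistence of landing inside wakes}: as long as $c$ stays within the single wake $W$ the combinatorics --- which periodic rays land together --- is constant, so $\Lambda$ cannot spontaneously acquire new accumulation points, and any boundary parameter still has $g^c_s$ landing, with the landing point degenerating to parabolic exactly when $c$ reaches a parabolic boundary parameter. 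Hence $A$ exhausts the non-escaping parameters of $W$, and in particular $g^{c_0}_s$ lands.

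The main obstacle is the closedness step, and behind it the failure of compactness caused by the unboundedness of exponential rays. For polynomials the analogous argument is almost immediate because rays live in a compactification in which $\infty$ is a superattracting fixed point; here $\infty$ is an essential singularity, so one must rule out, uniformly in $c$, that $g^c_s$ begins to accumulate at infinity along its finite end, and must upgrade continuous dependence of rays from compact potential intervals to dependence up to the landing point. Controlling the geometry of the ray near $t=0$ uniformly across the wake --- so that the holomorphic motion of the landing point genuinely drags the ray's endpoint with it --- is where the real work, and Rempe's persistence argument, is concentrated.
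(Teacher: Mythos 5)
You should know at the outset that the paper contains no proof of this statement to compare against: it is imported verbatim from Rempe \cite{R1} (Theorem 1), and the sentence preceding it describes the method in one line --- landing of periodic rays for attracting/parabolic parameters (due to Schleicher--Zimmer \cite{SZ3}) propagated to all non-escaping parameters by persistence of landing inside wakes. Your outline reproduces exactly that architecture (hyperbolic base case, open--closed continuation through wakes, holomorphic motion of the repelling landing point, Snail Lemma for the repelling/parabolic dichotomy), so as a reconstruction of the cited proof's strategy it is faithful. But as a proof it has a genuine gap precisely where you locate it: the closedness step is named rather than argued. ``The combinatorics is constant, so $\Lambda$ cannot spontaneously acquire new accumulation points'' is a restatement of the desired conclusion, not a mechanism --- constancy of which rays land together does not by itself prevent the accumulation set of a fixed ray from degenerating to $\{\infty\}$ at a limiting parameter, and ruling this out uniformly over the wake (via the structure theory of \cite{RS2} and a holomorphic-motion/$\lambda$-lemma argument with expansion along the repelling cycle) is the actual content of \cite{R1}. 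You also need the base case to include parabolic parameters (they lie on wake boundaries and are covered by \cite{SZ3}, not by hyperbolicity), and you need every non-escaping parameter to be reachable by your nested-wake scheme for an \emph{arbitrary} periodic address $s$, which again is the combinatorial input from \cite{RS2} that you assume rather than establish.

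There is additionally a concrete flaw in your preperiodic reduction. The observation that ``the escaping rays avoid the non-escaping singular orbit'' guarantees that the open ray lifts under the covering $f_c:\C\ra\C\setminus\{c\}$, but the obstruction sits at the endpoint: an intermediate pulled-back ray may land at the omitted value $c$ itself. This genuinely happens --- at a Misiurewicz parameter, preperiodic rays land at the singular value (Theorem~\ref{Correspondence between dynamical and parameter plane at Misiurewicz points}) --- and then the next pullback tends to infinity along its finite end (real parts going to $-\infty$) instead of landing, since $c$ has no preimage. The first pullback from the periodic cycle is safe (a periodic point has a preimage in its own orbit, whereas $c$ has none, so the periodic landing point is never $c$), but subsequent pullbacks are not, and so the preperiodic statement requires excluding the (pre)images of rays landing at the singular value. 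To be fair, the paper's quotation of the theorem glosses over the same caveat; but in your argument, as written, this step fails.
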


%Note that as preperiodic points are preimages of periodic points, by continuity of the inverse map in a neighborhood of any periodic point Theorem \ref{Landing theorem for periodic dynamic rays} implies that preperiodic rays also land unless they are preimages of a ray containindfg the singular value $c$.  

The construction of parameter rays is also done keeping in mind the fundamental property that  parameter rays have for polynomials: a point $c$ belongs to some parameter ray $G_s$ in $\Pi_P$ if and only if $c$ belongs to the dynamic ray $g^c_s$ in $\Pi_c$. It is carried out by 
F\"orster and Schleicher and is summarized in the following theorem about existence of parameter rays (\cite{FS},Theorem 3.7): 

\begin{thm}{\bf Existence of parameter rays.}\label{Existence of parameter rays}
Let $s\in\SS$. Then there is a unique injective curve $G_s: (t_s,\infty)\ra\C$, such that, for
all $t > t_s$ , $c=G_s(t)$ if and only if $c=g^c_s(t)$. 

The map $G_s: (t_s,\infty)\rightarrow \C$ is continuous, and $|G_s(t)-(t+2\pi i s_1)|\rightarrow 0$ as $t\rightarrow \infty.$ 
\end{thm}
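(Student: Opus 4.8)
The plan is to realize $G_s(t)$, for each fixed potential $t>t_s$, as the unique solution $c$ of the fixed-point equation
\[
c \;=\; g^c_s(t),
\]
i.e. as the parameter for which the singular value $c$ lands on the dynamic ray of address $s$ at potential $t$. This is the exact analog of the Douady--Hubbard parameter--dynamical correspondence, and the required biconditional ``$c=G_s(t)\iff c=g^c_s(t)$'' is precisely the statement that this equation has a unique root for every $t>t_s$. The two ingredients I would first secure are: (i) that $(c,t)\mapsto g^c_s(t)$ is continuous, and holomorphic in $c$, wherever it is defined; and (ii) the \emph{uniform} version of the asymptotics in \thmref{Existence of dynamic rays}, namely $g^c_s(t)=t+2\pi i s_1+o(e^{-t})$ with the error controlled independently of $c$ on the relevant range. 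Both follow from the construction of dynamic rays: for large potential the orbit $f^n_c(g^c_s(t))$ lies deep inside the strips $S_{s_{n+1}}$, where $f_c$ expands strongly and depends tamely on $c$, so the ray is a small, $c$-analytic perturbation of the straight horizontal ray $t\mapsto t+2\pi i s_1$.

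For large $t$ the fixed-point equation is then solved by a contraction argument. Since $g^c_s(t)=t+2\pi i s_1+o(e^{-t})$ uniformly, the map $c\mapsto g^c_s(t)$ is, for $t$ large, an almost-constant holomorphic self-map of a disk centered at $t+2\pi i s_1$, with $\partial_c g^c_s(t)\to 0$ as $t\to\infty$; the Banach fixed-point theorem (or the Schwarz lemma) gives a unique root $G_s(t)$ in that disk, depending analytically on $t$. Global uniqueness in $c$ — needed for the ``only if'' direction — comes from the argument principle: writing $h(c)=g^c_s(t)-c$, the asymptotics force $h(c)\sim -c$ as $|c|\to\infty$, so $h$ has a single zero, which must be the one already found. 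This also immediately yields the asymptotic $|G_s(t)-(t+2\pi i s_1)|\to 0$ as $t\to\infty$.

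The substantive part is to continue $G_s$ from large potentials down to all $t>t_s$ while preserving existence, uniqueness, and continuity. As $t\downarrow t_s$ the contraction estimate degrades and the ray approaches the edge of its domain of definition, so the perturbative picture breaks down. I would track the root by the implicit function theorem, which continues $G_s(t)$ as a single analytic branch as long as $\partial_c\!\big(g^c_s(t)-c\big)\neq 0$; the crux is to prevent the root count from changing, i.e. to exclude both the disappearance of the solution and the appearance of a second one. Here I would use the functional equation $f_c\big(g^c_s(t)\big)=g^c_{\si s}(F(t))$ to transport a potential $t$ near $t_s$ to the large potential $F^n(t)$ after $n$ iterations, where the count is already controlled, combined with the properness of the correspondence $c\mapsto(\text{address},\text{potential})$ on the set of escaping parameters; this reduces the behavior at small potentials to the already-understood behavior at large potentials and shows that exactly one escaping parameter carries the singular value at each admissible $(s,t)$.

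Finally, injectivity of $G_s:(t_s,\infty)\to\C$ is immediate: if $G_s(t_1)=G_s(t_2)=c$ then $c=g^c_s(t_1)=g^c_s(t_2)$, and since $t\mapsto g^c_s(t)$ is injective by \thmref{Existence of dynamic rays}, we get $t_1=t_2$; continuity of $G_s$ is part of the analytic continuation of the previous step. I expect the main obstacle to be entirely in that third paragraph: controlling the fixed-point equation down to the minimal potential $t_s$, where neither the contraction nor the naive argument-principle bound is available, and where one must invoke the global structure of escaping parameters to guarantee that the root persists and stays unique.
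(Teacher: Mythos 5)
You should first note that the paper contains no proof of this statement: it is quoted verbatim from F\"orster--Schleicher (\cite{FS}, Theorem 3.7), so the comparison is with their proof. Your reading of the theorem is correct (for each $t>t_s$ the equation $c=g^c_s(t)$ must have exactly one solution), and your large-$t$ contraction step is sound and close in spirit to the actual argument. But two of your steps have genuine gaps. First, the global uniqueness via the argument principle fails: $h(c)=g^c_s(t)-c$ is not an entire function of $c$. The map $c\mapsto g^c_s(t)$ is only defined where the dynamic ray of address $s$ exists at potential $t$, and the existence theorem for dynamic rays as stated requires the singular orbit \emph{not} to escape --- whereas every solution of $c=g^c_s(t)$ is an escaping parameter. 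For escaping parameters, rays of some addresses are broken (not defined down to $t_s$), because pullbacks run into the omitted singular value; this definability problem sits exactly on the set where your roots live. Moreover, the asymptotics $g^c_s(t)=t+2\pi i s_1+o(e^{-t})$ is an asymptotic in $t$ with error depending on $c$; it gives no control of $g^c_s(t)$ for \emph{fixed} $t$ as $|c|\ra\infty$, so the claim $h(c)\sim -c$, and with it the global root count, is unjustified. (Your injectivity remark has the same scope issue: it uses injectivity of $t\mapsto g^c_s(t)$ at an escaping parameter $c$.)

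Second, the continuation down to $t_s$ does not work as proposed. The transported equation $f^n_c(c)=g^c_{\si^n s}(F^n(t))$ does not inherit the large-potential uniqueness: it places no constraint on the first $n$ steps of the singular orbit, so it has (roughly) one solution for each admissible choice of the first $n$ itinerary entries, and cutting that count back to one is precisely the original problem. The \lq\lq properness of the correspondence $c\mapsto(\text{address},\text{potential})$ on escaping parameters\rq\rq{} that you invoke to rescue this is essentially the classification of escaping parameters, which in the literature (\cite{FS}, \cite{FRS1}) is \emph{deduced from} the construction of parameter rays --- so this step is circular. F\"orster--Schleicher avoid both obstructions by not treating $g^c_s$ as a black box in the variable $c$: they solve simultaneously for the parameter and the entire singular orbit, i.e.\ for a sequence $(z_k)$ with $z_0=c$, $z_{k+1}=e^{z_k}+c$ and $z_k$ asymptotic to $F^k(t)+2\pi i s_{k+1}$, as the fixed point of a contraction on a suitable sequence space, in the style of the construction of dynamic rays in \cite{SZ2}. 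The expansion of $F$ makes this contraction work uniformly for all $t>t_s$ at once, yielding existence, uniqueness, continuity and the asymptotics of $G_s$ in one stroke, with no continuation argument and no need for a prior theory of dynamic rays at escaping parameters. Your third paragraph correctly identifies where the difficulty lies, but the tools you propose there do not close it.
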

Given an address $s\in\SS$, we will call the unique curve $G_s$ given by Theorem \ref{Existence of parameter rays} the \emph{parameter ray of address $s$}.
%
%We will say that a parameter ray is called \emph{periodic} if its address is periodic, \emph{preperiodic} if its address is strictly preperiodic and \emph{(pre)periodic} if it is either periodic or strictly preperiodic.

Not much is known about the landing properties of parameter rays. However it is known (\cite{S3}) that parameter rays of periodic and preperiodic addresses land at parabolic and Misiurewicz parameters respectively.

\subsubsection*{Combinatorial spaces and cyclic order}\label{Combinatorial spaces and cyclic order}%{\bf Combinatorial spaces and cyclic order}

Theorems \ref{Existence of dynamic rays} and \ref{Existence of parameter rays} establish a correspondence between the set of dynamic/parameter rays and the set of exponentially bounded addresses $\SS$ for the exponential family. Moreover, the equation  $f_c (g^c_s(t))=g^c_{\sigma s}(F(t))$ in Theorem \ref{Existence of dynamic rays} tells that the dynamics of an exponential function $f_c$ on its set of escaping points $I$ is conjugate to the dynamics of the left-sided shift map $\sigma$ on $\SS$.

 The asymptotic estimates in Theorems \ref{Existence of dynamic rays} and \ref{Existence of parameter rays} show that dynamic and parameter rays have a well defined vertical order at infinity and that this order coincides with the order of their addresses in $\SS$. This is the analog of the cyclic order at infinity for polynomials rays.

For all these reasons we will refer to $\SS$ as the \emph{combinatorial space} for the family $e^z+c$.

For the family of unicritical polynomials $P_D$, the dynamic/parameter rays are in correspondence with the sequences over $D$ symbols (angles in D-adic expansion), that we can represent as 
\begin{small}
\begin{align*}
\SS_D&=\left\{ \frac{-D+1}{2},\dots,0,\dots,\frac{D-1}{2} \right\}^{ \N} &\text{for $D$ odd}\\
\SS_D&=\left\{\frac{-D+2}{2},\dots,0,\dots,\frac{D}{2} \right\}^{\N} &\text{for $D$ even}
\end{align*}
\end{small}

As for the exponential family, the dynamics of a unicritical polynomial of degree $D$ on the set of dynamic rays is conjugate to the the dynamics of the shift map $\sigma$ on $\SS_D$; also, dynamic and parameter rays have a cyclic order at infinity which corresponds to the cyclic order on $\SS_D$ if we identify the sequences modulo $D$.

 If $l,s\in \Z^\N$ are two sequences, $l=l_1 l_2\dots$ and $s=s_1 s_2 \dots$, we define the distance 
\[\text{dist}(l, s)=\sum_{s_k \neq l_k} \frac{1}{2^k},\]
 which turns $\SS$, $\SS_D$ into metric spaces.
  
The space $\SS_D$ embeds naturally in $\SS$ via the identity map; similarly, if $A\subset\SS$ is such that $|s|<N$ for each $s\in\SS$, then $A$ embeds in $\SS_D$ via the identity map for each $D>2N+2$.   

We will refer to $\SS_D$ as the \emph{combinatorial space} for the family $P_D$.

We will refer to  this description as \emph{combinatorial correspondence} between the exponential family and unicritical polynomials of sufficiently high degree $D$.

%Now that we have recollected the essential general results about rays and their landing properties we will move to the specific class of parameters that we are interested in: Misiurewicz parameters.

%%%%%%%%%%%%%%%%%%%%%%%%%%%%%%%%%%%%%%%%%%%%%%%%%%%
%%%%%%%%%%%%%%%%%%%%%%%%%%%%%%%%%%%%%%%%%%%%%%%%%%%%%%%%%%%%%%%%%%%%%%%%%%%%
\section{Misiurewicz parameters}\label{Misiurewicz parameters}
Given the exponential family $f_c=e^z+c$, or a family of degree $D$ unicritical polynomials  $f_c=z^D+c$,
we call a  parameter $c_0$ \emph{Misiurewicz}  (or  \emph{postsingularly finite}) if the orbit of the singular/critical value is preperiodic.

In the exponential family,  the singular value is an omitted value and hence the postsingular orbit cannot be periodic; note also that such an orbit has to be repelling, otherwise the unique singular/critical  value would belong to the immediate attracting basin by a classical theorem of Fatou contradicting the fact that it is preperiodic.

 So being postsingularly finite is equivalent to say that the singular value $c_0$ lands a repelling orbit $\{z_i\}$ of period $m$ after $k$ iterations, for some integers $k,m$. We will refer to $\{z_i\}$ as the \emph{postsingular periodic orbit}.
 
From the definition  above and the discreteness of solutions of the equation $f_c^{k+m}(c)=f_c^k (c)$ it follows immediately that Misiurewicz parameters belong to the bifurcation locus. 

We will say that an exponential or polynomial map $f_{c_0}$ is \emph{Misiurewicz} (or \emph{postsingularly finite}) if $c_0$ is a Misiurewicz parameter.

There cannot be hyperbolic or parabolic Fatou components because $c$ belongs to the Julia set, nor Siegel disks because the orbit of $c$ accumulates on a finite set, so  for an exponential Misiurewicz map the Julia set is equal to $\C$.

There are  a few reasons why proving triviality of fibers (see section \ref{Fibers and Rigidity} for definition of fibers and a discussion on rigidity) for Misiurewicz parameters is easier than the other cases. Among them there is a correspondence between dynamical and parameter plane at Misiurewicz parameters. For polynomials, this is a well known result (see e.g. \cite{DH}, Chapter III, Theorem 2, for the quadratic case).

\begin{thm}\label{Dyn par correspondence polys}
Let $P_c^D$ be a unicritical polynomial with $c$ Misiurewicz. Then there are finitely many rays landing at $c$ in $\Pi^D_c$, whose angles are preperiodic. The parameter rays with the same angles land at $c$ in the parameter plane.
\end{thm}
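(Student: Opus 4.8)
The plan is to establish the dynamical-plane landing first and then transfer the information to parameter space via a surgery/perturbation argument. \textbf{Step 1 (dynamical plane).} Since $c$ is Misiurewicz, the critical value $c$ is strictly preperiodic, landing after $k$ steps on a repelling periodic orbit of period $m$. Each forward image $P_c^j(c)$ is a repelling preperiodic point, and by the landing theorem for preperiodic rays (the polynomial analog of \thmref{Landing theorem for periodic dynamic rays}) at least one preperiodic dynamic ray lands at each such point. I would argue that only finitely many rays land at $c$: the set of rays landing at a given repelling preperiodic point is finite (a standard local-linearization / normal-families argument near a repelling cycle), and since the postcritical set is finite, only finitely many angles are involved. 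All such angles are preperiodic because $P_c^m$ maps the cycle of rays at the periodic endpoint to itself, forcing periodicity downstream and hence preperiodicity of the original angles.

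\textbf{Step 2 (transfer to parameter space).} This is the heart of the matter and the step I expect to be the main obstacle. The goal is to show that a preperiodic angle $s$ whose dynamic ray $g^c_s$ lands at $c$ in $\Pi_c^D$ also has its parameter ray $G_s$ landing at $c$ in $\Pi_P^D$. The standard route (following \cite{DH} in the quadratic case) is to use the fact that a Misiurewicz parameter is \emph{transversal}: the map $c\mapsto P_c^j(c)$ and the implicit equation defining the repelling periodic point $z(c)$ meet transversally, so that near $c_0$ the difference $P_c^{k}(c)-z(c)$ (with $z(c)$ the analytically continued periodic point) has a simple zero. Equivalently, the holonomy/landing of the dynamic ray $g^{c}_s(t)$ depends holomorphically on $c$ for $c$ near $c_0$, and the map sending $c$ to the landing point of $g^c_s$ is, up to the transversal correction, a local biholomorphism. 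The parameter ray $G_s$ is by definition $\{c : c=g^c_s(t)\}$, so tracing the dynamic ray landing as $c$ varies and invoking transversality shows $G_s(t)\to c_0$ as $t\to t_s$.

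\textbf{Step 3 (matching the angles).} To conclude that the \emph{same} preperiodic angles appear in both planes, I would combine Steps 1 and 2 with a counting/injectivity argument: the transversality at $c_0$ shows the correspondence between dynamic rays landing at $c$ in $\Pi_c^D$ and parameter rays landing at $c$ in $\Pi_P^D$ is a bijection preserving the cyclic order at infinity. The order-preservation (from the asymptotic estimates that give dynamic and parameter rays a common cyclic order at infinity) rules out any angle mismatch, so the finite set of preperiodic angles landing at $c$ dynamically is exactly the set landing at $c$ in the parameter plane.

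\textbf{Main obstacle.} The genuine difficulty is making the perturbation argument of Step 2 rigorous: one must control the landing of $g^c_s$ uniformly as both $c\to c_0$ and $t\to t_s$, and verify transversality (nonvanishing of the relevant derivative) at the Misiurewicz parameter. For unicritical polynomials this is classical and can be cited essentially verbatim from \cite{DH} (quadratic case) together with the holomorphic dependence of dynamic rays on parameters; since the theorem as stated concerns $P_c^D$ rather than the exponential family, I would present this as a recollection of the polynomial theory, with the transversality of Misiurewicz parameters as the key analytic input and the order-preservation of rays at infinity as the combinatorial input that pins down the angles.
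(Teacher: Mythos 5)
The first thing to note is that the paper does not prove this theorem at all: it is stated as a classical fact, with a pointer to \cite{DH} (Chapter III, Theorem 2) for the quadratic case, and the landing of preperiodic parameter rays is later invoked via \cite{PR}. So your proposal is measured against the classical literature rather than an in-paper argument. With that understood, your Step 1 is correct in substance but cites its key input in the wrong direction: the landing theorem (the polynomial analog of \thmref{Landing theorem for periodic dynamic rays}) goes from rays to points, whereas what you need is the converse --- that every repelling periodic point of a polynomial with connected Julia set is the landing point of at least one, and then finitely many, periodic rays (Douady; see \cite{M}, Theorem 18.11, and \cite{Mi}) --- followed by a pullback of this finite ray cycle along the orbit $c\mapsto P_c^D(c)\mapsto\cdots$, which is unobstructed because no forward image of $c$ can be the critical point $0$ (otherwise $c$ would be periodic rather than strictly preperiodic). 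That yields finiteness and preperiodicity of the angles exactly as you claim.

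Step 2 is where the genuine gap sits, and you have located it yourself; two remarks on how you propose to fill it. First, transversality of Misiurewicz parameters (simplicity of the zero of $c\mapsto P_c^{k}(c)-z(c)$) is true but is itself a deep theorem --- in the Douady--Hubbard framework it comes out of Hubbard-tree/Thurston-type rigidity --- so routing the landing statement through it inverts the usual logical order; and even granting it, the inference \lq\lq transversality plus holomorphic dependence of the landing point, hence $G_s(t)\to c_0$\rq\rq{} is not a proof until one controls $g^c_s(t)$ uniformly in $(c,t)$ as $t\downarrow t_s$, which is precisely the hard analytic content you flag. Second, the classical proof avoids transversality entirely: one shows the accumulation set of the parameter ray $G_s$ is a compact connected subset of the bifurcation locus consisting of parameters for which the dynamic ray of angle $s$ lands at the critical value; every such parameter satisfies the algebraic equation $P_c^{k+m}(c)=P_c^{k}(c)$, whose solution set is finite, so the accumulation set is a single point, which is then identified with $c$ by rigidity of Misiurewicz parameters (the angle determines the parameter --- the polynomial counterpart of \thmref{Classification of Misiurewicz exponential maps}); this is the shape of the arguments in \cite{DH} and \cite{PR}. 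Since you ultimately propose to quote \cite{DH} essentially verbatim for Step 2, your proposal collapses to the same citation the paper itself makes --- acceptable as a recollection, but not an independent proof; and if you adopt the discreteness-plus-connectivity argument instead, your Step 3 becomes unnecessary, since each angle is handled individually and no cyclic-order counting is needed to match the angle sets (note the statement only asserts one direction: the dynamically landing angles also land in parameter space).
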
,   

The analog of  Theorem \ref{Dyn par correspondence polys} for the exponential family is proven in \cite{SZ2}:

\begin{thm}{\bf Correspondence between dynamical and parameter plane.}\label{Correspondence between dynamical and parameter plane at Misiurewicz points}
An exponential  Misiurewicz parameter $c_0$ is the landing point of finitely many parameter rays  $G_{s_1},..,G_{s_q}$ whose addresses $s_1 <...<s_q$ are preperiodic of period $mq$ and preperiod $k$; moreover, the  dynamic rays $g_{s_1},...,g_{s_q}$ with the corresponding addresses land at $c_0$ in $\Pi_{c_0}$.
\end{thm}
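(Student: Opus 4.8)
The plan is to prove the two halves of the statement separately: first the dynamical landing (the rays $g_{s_1},\dots,g_{s_q}$ land at $c_0$ in $\Pi_{c_0}$), and then the parameter-plane correspondence (the rays $G_{s_1},\dots,G_{s_q}$ land at $c_0$ in $\Pi_P$), using the first to bootstrap the second. The dynamical half is essentially a consequence of \thmref{Landing theorem for periodic dynamic rays}: since $c_0$ is Misiurewicz, the singular value $c_0$ is a preperiodic point of $f_{c_0}$, landing after $k$ steps on a repelling cycle of period $m$. I would first establish that $c_0$ is the landing point of \emph{at least one} preperiodic dynamic ray. The natural approach is to show that $c_0$ lies in the Julia set and is accessible through escaping points with a (pre)periodic address; more robustly, one can argue that the repelling periodic point $f_{c_0}^k(c_0)$ on the postsingular cycle has a periodic dynamic ray landing at it (such points always have periodic rays landing, by the landing theorem combined with density of such points), and then pull this ray back $k$ times under $f_{c_0}$ to produce a preperiodic ray landing at $c_0$.

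Next I would show the set of rays landing at $c_0$ is \emph{finite} and determine their common period and preperiod. Finiteness follows from the fact that $c_0$ is an omitted value together with the local injectivity of $f_{c_0}$ away from the singular value: the rays landing at the repelling periodic point are permuted cyclically by $f_{c_0}^m$, and there are only finitely many of them (a repelling periodic point of an exponential map admits only finitely many landing periodic rays, since the local dynamics is a finite-to-one covering of a punctured neighborhood). Let $q$ be the number of rays landing at $c_0$; pulling back the cyclic permutation of the $q\cdot(\text{cycle length})$ rays on the orbit and tracking how the shift $\sigma$ acts on addresses gives that the $q$ addresses at $c_0$ are preperiodic with preperiod $k$ and that $\sigma^{mq}$ fixes each of them, i.e. they have period $mq$. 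The ordering $s_1<\dots<s_q$ comes from the vertical order at infinity discussed after \thmref{Existence of parameter rays}.

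For the parameter-plane half, the key is the defining property of parameter rays recorded in \thmref{Existence of parameter rays}: $c \in G_s$ if and only if $c \in g^c_s$. Thus to show $G_{s_i}$ lands at $c_0$ one wants to show that as $t\to t_{s_i}^+$ the parameter-ray point $G_{s_i}(t)$ converges to $c_0$. Here I would exploit stability: for parameters $c$ near $c_0$, the dynamic rays $g^c_{s_i}$ move holomorphically (the addresses $s_i$ are preperiodic and land at preperiodic points, which persist and move holomorphically as long as no parabolic bifurcation occurs, by \thmref{Landing theorem for periodic dynamic rays} applied across a neighborhood). A standard transversality / implicit-function argument — identifying $c_0$ as a simple solution of $f_c^{k+m}(c)=f_c^k(c)$, using the discreteness already noted in \secref{Misiurewicz parameters} — should show that the parameter rays accumulate precisely at $c_0$ and not elsewhere, matching the dynamical landing of $g^{c_0}_{s_i}$ at $c_0$.

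\textbf{The hard part} will be this last step: transferring the dynamical landing to a genuine landing statement in parameter space. The obstacle is that \thmref{Existence of parameter rays} only controls $G_s(t)$ for $t>t_s$ and gives no a priori landing as $t\to t_s^+$; one must rule out that $G_{s_i}(t)$ fails to converge or converges to a wrong point, and one must control the holomorphic motion of the preperiodic dynamic rays uniformly near $c_0$ despite the rays being unbounded curves escaping to infinity. The cleanest route is likely to invoke the already-cited result (\cite{S3}) that preperiodic parameter rays land at Misiurewicz parameters, and then identify the landing point with $c_0$ by matching addresses via the dynamical correspondence established in the first two paragraphs; making that address-matching rigorous, rather than merely plausible, is where the real work lies.
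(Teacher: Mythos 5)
First, note that the paper does not prove this theorem at all: it is quoted verbatim as a known result from Schleicher--Zimmer \cite{SZ2} (with \cite{S3} supplying the landing of preperiodic parameter rays), so your proposal must be judged against the literature rather than against an in-paper argument. Measured that way, your dynamical half contains a genuine gap: you invoke \thmref{Landing theorem for periodic dynamic rays} in the wrong direction. Rempe's theorem says that every periodic ray \emph{lands at} a repelling or parabolic point; it does not say that every repelling periodic point \emph{is a landing point} of a periodic ray, and \lq\lq density of such points\rq\rq\ does not bridge this -- the converse is a separate and substantially harder theorem, proved for postsingularly finite exponential maps in \cite{SZ3} precisely via the dynamical-partition/itinerary machinery reproduced in \lemref{Significance of dynamical partition for Misiurewicz parameters} (one realizes the itinerary of the postsingular periodic point by a periodic address and uses hyperbolic contraction to show the ray with that address lands exactly there). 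Your finiteness argument also fails in this family: unlike degree $D$, there are infinitely many addresses of any given period (entries range over all of $\Z$), and the exponential is locally injective everywhere, so \lq\lq finite-to-one covering of a punctured neighborhood\rq\rq\ bounds nothing; finiteness of the set of rays at the periodic point is the content of the orbit-portrait lemma (\cite{RS2}, Lemma 3.2, quoted in Section 4 of this paper), after which local injectivity of $f^k$ transports it to $c_0$. Your pullback construction of the preperiodic rays and the period/preperiod bookkeeping are fine once these inputs are supplied.

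In the parameter half the transversality/implicit-function sketch cannot by itself produce landing, as you concede; but the fallback you propose -- cite \cite{S3} that each $G_{s_i}$ lands at \emph{some} Misiurewicz parameter and then \lq\lq match addresses\rq\rq\ -- defers exactly the hard core. Knowing that $G_{s_i}$ lands at a Misiurewicz parameter $c_1$ in whose dynamical plane $g_{s_i}$ lands at the singular value, you must still exclude that $c_1\neq c_0$, i.e. that two distinct postsingularly finite parameters both have the dynamic ray of address $s_i$ landing at the singular value. That uniqueness is Thurston rigidity for exponential maps (\cite{LSV}, resting on the exponential Thurston theory of \cite{HSS}), which is essentially \thmref{Classification of Misiurewicz exponential maps} -- a statement this paper \emph{derives from} the theorem you are proving, so appealing to it here would be circular within the paper's own logical structure, though legitimate as an external citation. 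In short: your decomposition (dynamical landing first, then transfer to parameter space) matches the shape of the literature proof, but as written it rests on two misquoted inputs (the converse landing theorem and finiteness of portraits) and leaves the only genuinely difficult step -- the rigidity/uniqueness identification of the landing point -- unproved.
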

%\begin{thm}\label{Correspondence between dynamical and parameter plane at Misiurewicz points}{\bf Dynamical-parameter plane correspondence at Misiurewicz points} A preperiodic parameter ray lands at some Misiurewicz parameter $c_0$ iff the dynamic ray with the same address lands at  $c_0$ in $\Pi_{c_0}$.
%\end{thm} 

%Together with the generalization of Thurston's rigidity theorem for exponentials (\cite{HSS}) and a subsequent work (\cite{LSV}),  it gives  a combinatorial classification of postsingularly finite exponential maps in the following Theorem (\cite{LSV}, Theorem 2.6):

Together with the fact that parameter rays with preperiodic address land (\cite{S3}), Theorem \ref{Correspondence between dynamical and parameter plane at Misiurewicz points} gives  a combinatorial classification of postsingularly finite exponential maps; see (\cite{LSV}, Theorem 2.6).
      
\begin{thm} {\bf Classification of Misiurewicz exponential maps.}\label{Classification of Misiurewicz exponential maps}
For every preperiodic  address $s$, there is a unique postsingularly finite exponential map such that the dynamic ray at address $s$ lands at the singular value. Every postsingularly finite exponential map is associated in this way to a positive finite number of preperiodic addresses.
\end{thm}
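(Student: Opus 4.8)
The plan is to read the statement as an assertion about the map that sends a preperiodic address $s$ to the landing point of its parameter ray $G_s$, and to show that this map is a well-defined surjection onto the set of postsingularly finite parameters with finite, nonempty fibers. The two ingredients are the fact (\cite{S3}) that every parameter ray with preperiodic address lands at a Misiurewicz parameter, and \thmref{Correspondence between dynamical and parameter plane at Misiurewicz points}, which matches the parameter rays landing at a Misiurewicz parameter $c_0$ with the dynamic rays landing at the singular value in $\Pi_{c_0}$.

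First I would establish existence. Given a preperiodic address $s$, the parameter ray $G_s$ (which exists by \thmref{Existence of parameter rays}) lands, by \cite{S3}, at some Misiurewicz parameter $c_0$. Applying \thmref{Correspondence between dynamical and parameter plane at Misiurewicz points} at $c_0$, the address $s$ occurs among the addresses $s_1,\dots,s_q$ of the parameter rays landing at $c_0$, and the theorem then guarantees that the corresponding dynamic ray $g^{c_0}_s$ lands at the singular value $c_0$ in $\Pi_{c_0}$. Thus $c_0$ is a postsingularly finite map for which the dynamic ray at address $s$ lands at the singular value, which gives the existence part of the first sentence.

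Next I would prove the second sentence. Let $c_0$ be an arbitrary postsingularly finite map; then $c_0$ is Misiurewicz, so \thmref{Correspondence between dynamical and parameter plane at Misiurewicz points} presents it as the landing point of finitely many parameter rays $G_{s_1},\dots,G_{s_q}$ with $q\ge 1$, whose dynamic rays all land at the singular value. Hence $c_0$ is associated to the nonempty set $\{s_1,\dots,s_q\}$ of preperiodic addresses. Finiteness of the full set of addresses associated to $c_0$ follows because only finitely many dynamic rays can land at a single point, here the singular value, a standard consequence of the landing theory (\thmref{Landing theorem for periodic dynamic rays} and \cite{SZ2}); positivity is exactly $q\ge 1$.

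The delicate point, and the one I expect to be the main obstacle, is the uniqueness asserted in the first sentence: that the postsingularly finite map with $g_s$ landing at the singular value is unique. The clean way to obtain this is to read \thmref{Correspondence between dynamical and parameter plane at Misiurewicz points} as a \emph{tight} correspondence, namely that the addresses of the dynamic rays landing at the singular value of a Misiurewicz parameter $c_0$ are exactly the addresses of the parameter rays landing at $c_0$. Granting this, suppose $g^{c_0}_s$ lands at the singular value $c_0$; then $s$ is one of the parameter-ray addresses at $c_0$, so $G_s$ lands at $c_0$. Since a parameter ray lands at a unique point, $c_0$ is determined by $s$, giving uniqueness. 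Establishing this tightness — that no extra dynamic ray lands at the singular value beyond those produced by the correspondence — is the real content to verify; it should follow from the period/preperiod bookkeeping ($s_1,\dots,s_q$ of period $mq$ and preperiod $k$) in \thmref{Correspondence between dynamical and parameter plane at Misiurewicz points} together with the injectivity of the ray parametrizations, but it is where the argument must be made carefully rather than quoted.
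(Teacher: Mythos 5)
The paper does not actually prove this statement: it is imported verbatim from (\cite{LSV}, Theorem 2.6), with the surrounding text merely remarking that it fits together with the landing of preperiodic parameter rays (\cite{S3}) and Theorem~\ref{Correspondence between dynamical and parameter plane at Misiurewicz points}. Measured against that, the uncontroversial parts of your proposal are fine: existence follows exactly as you say (land $G_s$ by \cite{S3}, then apply the correspondence theorem at the landing point), and the second sentence also follows, with the finiteness point best closed by pushing the rays landing at the singular value forward to the postsingular \emph{periodic} orbit, where the orbit portrait lemma gives finiteness, rather than by appealing to the landing theorem.

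The genuine gap is exactly where you flagged it, but it is much deeper than \lq\lq period/preperiod bookkeeping plus injectivity of the ray parametrizations.\rq\rq\ Your reduction of uniqueness to the tight converse of Theorem~\ref{Correspondence between dynamical and parameter plane at Misiurewicz points} is structurally correct, but nothing quoted in this paper can deliver that converse: it is a rigidity statement --- the single combinatorial datum $s$ determines the parameter --- of the same nature as the paper's main theorem (Theorem~\ref{Triviality of fibers}), which in fact \emph{uses} the present classification as an ingredient rather than the other way around, so any attempt to rederive it from the surrounding results risks circularity. Concretely, no within-one-dynamical-plane argument (itineraries with respect to the dynamical partition, Lemma~\ref{Significance of dynamical partition for Misiurewicz parameters}, counting of addresses sharing an itinerary) rules out two distinct Misiurewicz parameters $c_0\neq c_1$ such that $g^{c_0}_s$ and $g^{c_1}_s$ each land at the respective singular value, since the combinatorics visible in the two planes would be identical; note also that Lemma~\ref{Significance of dynamical partition for Misiurewicz parameters} explicitly excludes rays on the backward orbit of the ray landing at the singular value, so it cannot even certify that the $q$ rays of the correspondence theorem exhaust the rays landing at the singular value. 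In \cite{LSV} both uniqueness and (independently of parameter rays) existence are obtained from the Thurston-type rigidity theorem for exponential maps of \cite{HSS}, via a spider/Teichm\"uller-space iteration argument. So your write-up is acceptable only if the first sentence's uniqueness is imported from \cite{LSV} as the paper itself does, not proved along the lines you sketch.
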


For unicritical polynomials it is well known that  parameter rays with preperiodic angles land at Misiurewicz parameters (\cite{PR}), so Theorem \ref{Classification of Misiurewicz exponential maps} offers a natural correspondence between exponential Misiurewicz parameters and polynomial Misiurewicz parameters through the angles/addresses of the parameter rays landing at them.
\vspace{1.5cm}
\begin{figure}[h]
\begin{center}$
\begin{array}{cc}
\includegraphics[width=6 cm]{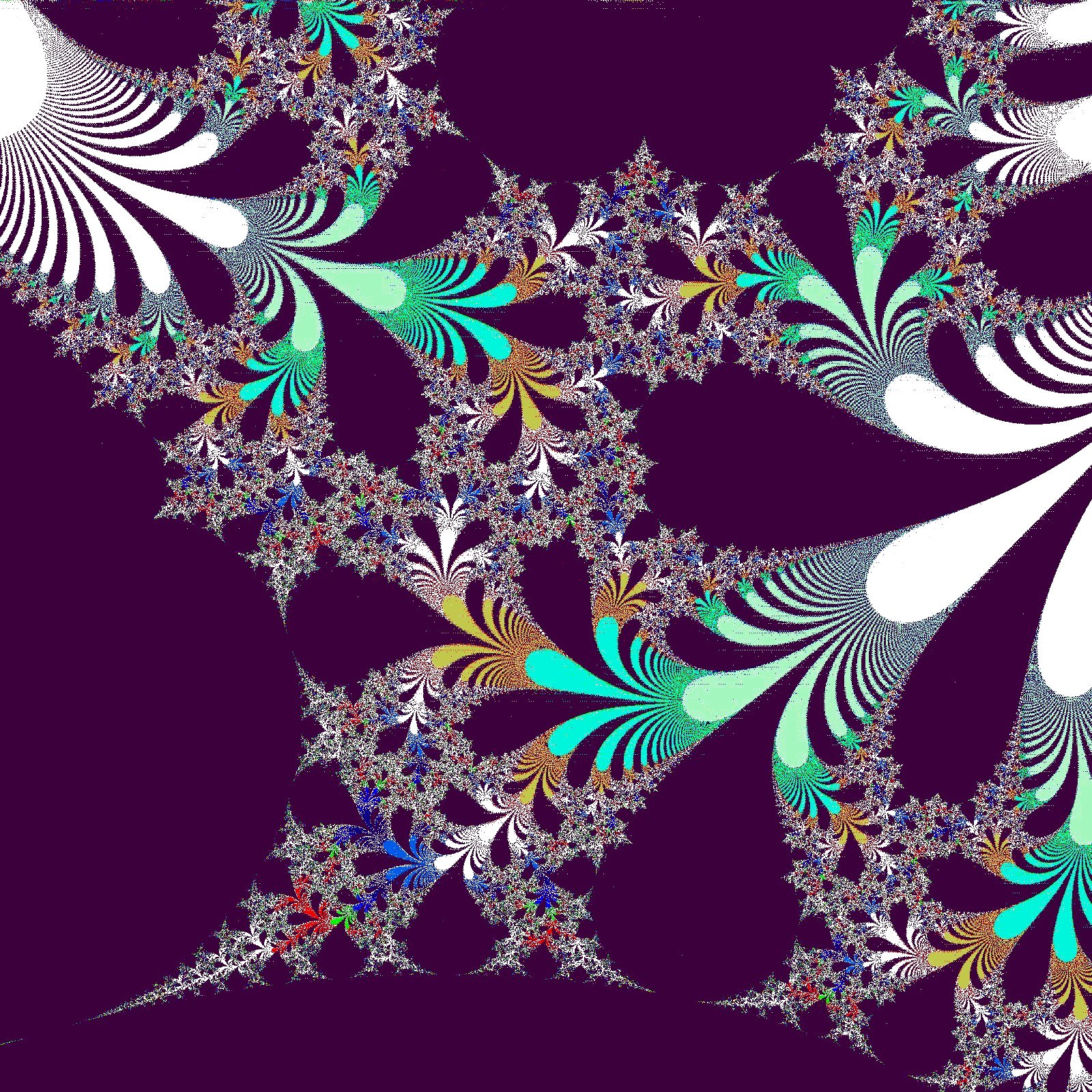} &
\includegraphics[width=6 cm]{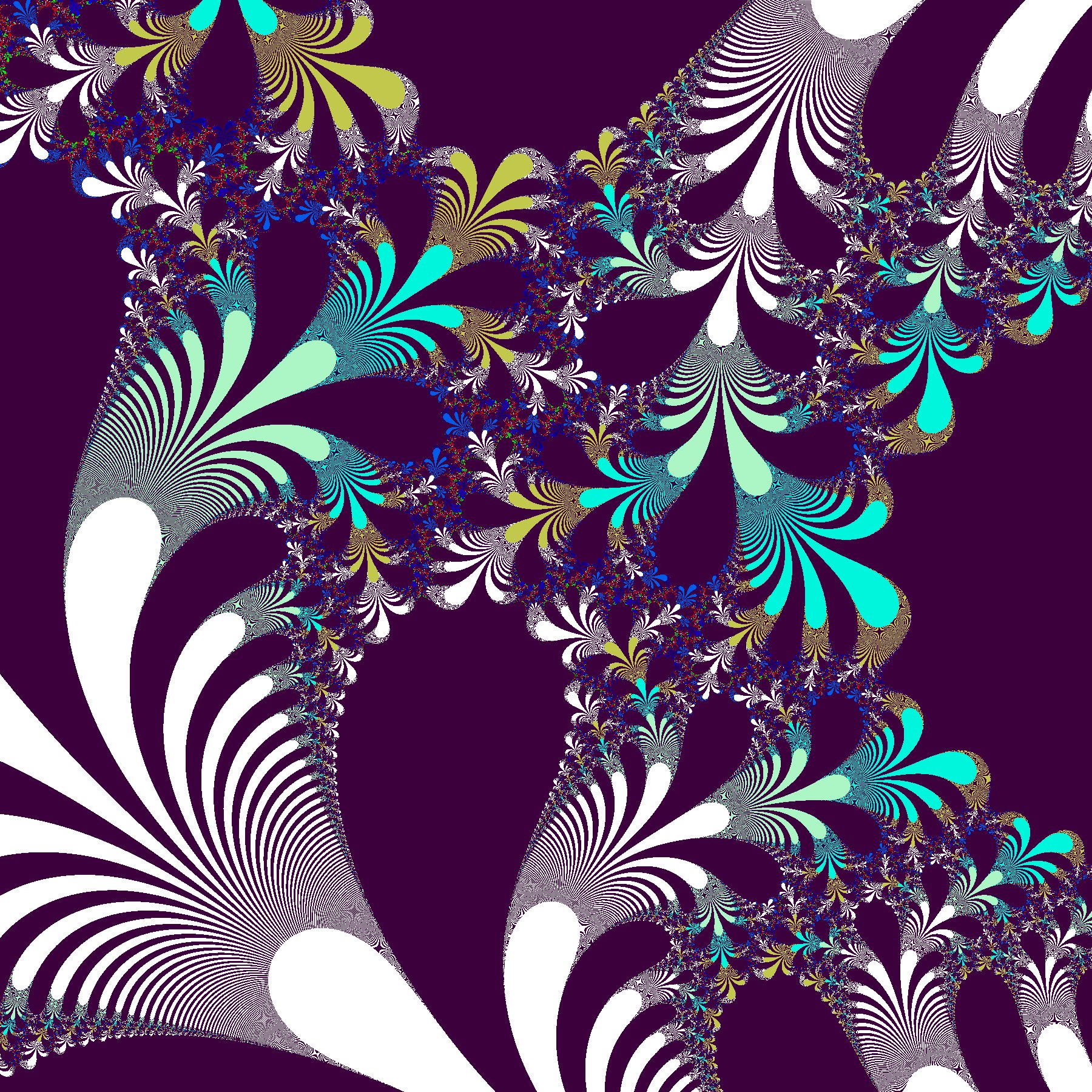}
\end{array}$
\end{center}
\caption{Parameter (on the left) and dynamical plane near the Misiurewicz parameter 1.81507+4.70945i; the spiralling of the  two rays landing at it can be inferred from the picture.}                  
\end{figure}
\vspace{0.5cm}

Before exploring further the consequences of the combinatorial classification of  Misiurewicz exponential maps, let us mention that the second main ingredient in proving triviality of fibers is offered by  contraction under the inverse map in a neighborhood of the postsingular periodic orbit.
\subsubsection*{A combinatorial property of Misiurewicz parameters}
One of the features of Misiurewicz parameters that we are going to use in the proof of our main theorem is a lemma connecting topology to combinatorics. It is  proved in \cite{SZ2} for exponentials and is probably known for unicritical  polynomials of degree $D$; for completeness we will include  a proof following the outline of \cite{SZ2}.
Before stating and proving the lemma, we need to introduce a dynamical partition for Misiurewicz parameters.

\vspace{4pt}
{\bf Dynamical partition.}
Let $f(z)=e^z+c_0$ or $f(z)=z^D+c_0$ where $c_0$ is a Misiurewicz parameter for the map under consideration, and
let $g_{s_1}$ be one of the finitely many dynamic rays landing at $c_0$ given by Theorem \ref{Correspondence between dynamical and parameter plane at Misiurewicz points} for exponentials and by Theorem \ref{Dyn par correspondence polys} for polynomials. 
The preimage of $g_{s_1}$ under $f$ is a set of countably many curves going to $-\infty$ in the case of exponentials, and a set of $D$ curves connecting at $0$ for a polynomial of degree $D$. In both cases, the preimages of $g_{s_1}$ partition the plane into open domains $W_j$.

Similarly, the preimages of $s_1$ under the shift map partition the combinatorial spaces $\SS$ and $\SS_D$ into the same number of sectors .
Label with the entry $0$ the dynamical and the combinatorial sector containing $c_0$ and $s_1$ respectively, and label all other sectors using consecutive integers respecting the cyclic/vertical order at infinity.

 Any non-escaping point, as well as any ray $g_s$ which does not belong to the backward orbit  of the ray  $g_{s_1}$, has a well defined itinerary whose entries keep track of the sectors visited by iterates of $s$ under the shift map.
 
We call this partition of the plane into the open domains $W_i$ a \emph{dynamical partition} for $f$. 

Different  choices of the dynamic ray landing at $c_0$ will lead to  different dynamical partitions; however, this choice will not matter to us.

For the sequel we will need to consider $\C\setminus\underset{n\geq-1}\cup f^n g_{s_1}$. This gives a new partition of the plane into domains $\widehat{W}_{i,j}$ where, for each fixed $i$,  $\widehat{W}_{i,j}$ denotes a connected component of $W_i$.

For convenience of the reader, let us recall that any domain $S$ of $\C$ whose complement contains at least two points, has for universal covering the unit disk. The standard hyperbolic metric of the unit disk can be pushed forward via a universal covering map to obtain a well defined hyperbolic metric on $S$ whose density we denote by $\rho_S$ (For details see e.g. Chapter 2 in \cite{M}).

We will use the following basic theorem about hyperbolic contraction (\cite{M}, Theorem 2.11):
\begin{thm}{\bf Schwarz-Pick lemma.}
If $f: S\ra S'$ is a holomorphic map between two domains admitting a hyperbolic metric, either $f$ is a local isometry, or $f$ strictly decreases all nonzero distances in the hyperbolic metrics of $S,S'$ respectively.
\end{thm}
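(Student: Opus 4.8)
The plan is to reduce the statement to the classical Schwarz-Pick inequality on the unit disk $\D$ and then transfer the conclusion back through the universal coverings that were used to define $\rho_S$ and $\rho_{S'}$.

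First I would pass to universal covers. Since $S$ and $S'$ each have complement containing at least two points, there are holomorphic universal covering maps $\pi:\D\ra S$ and $\pi':\D\ra S'$, and by construction $\rho_S,\rho_{S'}$ are the push-forwards of the standard density $\rho_\D(w)=2/(1-|w|^2)$. Because $\D$ is simply connected, the composition $f\comp\pi:\D\ra S'$ lifts through $\pi'$ to a holomorphic map $F:\D\ra\D$ with $\pi'\comp F=f\comp\pi$. Since covering maps are local isometries for these metrics, the whole problem reduces to comparing $\rho_\D$ with its pullback under $F$.

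Second, I would establish the disk case: any holomorphic $F:\D\ra\D$ satisfies
\[ \frac{|F'(w)|}{1-|F(w)|^2}\le\frac{1}{1-|w|^2}\qquad\text{for all }w\in\D, \]
with equality at some point if and only if $F$ is a conformal automorphism of $\D$. The standard route is to fix $w_0$, precompose and postcompose with the disk automorphisms $\phi_{w_0}$ and $\phi_{F(w_0)}$ that send the relevant points to the origin, and apply the Schwarz lemma to $G=\phi_{F(w_0)}\comp F\comp\phi_{w_0}^{-1}$, which fixes $0$. The estimate $|G'(0)|\le 1$ unwinds by the chain rule to the displayed inequality, and the rigidity clause of the Schwarz lemma ($|G'(0)|=1$ forces $G$ to be a rotation) yields the equality statement. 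The key point is that the equality case is rigid: if equality holds at one point then $F$ is an automorphism and hence preserves $\rho_\D$ everywhere, so either $F$ contracts strictly at every $w$ or $F$ is a global isometry, with no intermediate behaviour.

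Third, I would descend this dichotomy to $f$ and then upgrade from densities to distances. Using $\pi'\comp F=f\comp\pi$ together with the fact that $\pi,\pi'$ are local isometries, in suitable local coordinates the pullback density of $f$ relative to $\rho_{S'}$ agrees with the pullback density of $F$ relative to $\rho_\D$; hence either $f^*\rho_{S'}=\rho_S$ identically (a local isometry) or $f^*\rho_{S'}<\rho_S$ everywhere. Finally, since the hyperbolic distance between two points is the infimum of hyperbolic lengths of connecting paths, strict pointwise contraction of densities gives strict contraction of the length of every nonconstant path, and therefore strictly decreases the distance between any two distinct points. The main obstacle is not any individual inequality, all of which are classical, but the bookkeeping in this last step: verifying that the strict-everywhere versus equality-everywhere alternative for the lift $F$ transfers faithfully to $f$ through the two coverings, and that ``local isometry at the density level'' is precisely the complement of ``strictly decreases all nonzero distances.''
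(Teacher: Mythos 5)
The paper does not prove this statement at all: it is quoted verbatim as Theorem 2.11 of Milnor's \emph{Dynamics in one complex variable} (reference \cite{M}) and used as a black box to derive the monotonicity lemma and the contraction arguments in Lemma \ref{Significance of dynamical partition for Misiurewicz parameters}. So there is no in-paper proof to compare against; your route --- lift through the universal coverings $\pi:\D\ra S$, $\pi':\D\ra S'$ to a holomorphic $F:\D\ra\D$, apply the classical Schwarz--Pick inequality on the disk with its rigid equality case, and descend the everywhere-strict versus everywhere-equal dichotomy to the densities on $S$ --- is precisely the classical argument (essentially Milnor's own), and all of those steps are correct as you state them.

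The one place where your write-up has a genuine logical gap is the very last inference: from \lq\lq every nonconstant path is strictly shortened\rq\rq\ you conclude \lq\lq therefore the distance between distinct points strictly decreases.\rq\rq\ Strict inequalities do not survive an infimum: knowing $\length(f\comp\gamma)<\length(\gamma)$ for each competitor $\gamma$ does not by itself give $\dist(f(p),f(q))<\dist(p,q)$, since the defect could shrink to zero along a minimizing sequence. You need one extra ingredient, and either of two standard patches works. (a) Show the infimum is attained: fix a lift $\tilde p$ of $p$; since paths lift and project with equal length, $\dist_S(p,q)=\inf\{\dist_\D(\tilde p,\tilde q):\pi(\tilde q)=q\}$, and because the fiber $\pi^{-1}(q)$ is discrete and $\dist_\D(\tilde p,\cdot)\ra\infty$ toward $\partial\D$, this infimum is a minimum; projecting the disk geodesic gives a length-minimizing path $\gamma_0$ in $S$, and then $\dist(f(p),f(q))\le\length(f\comp\gamma_0)<\length(\gamma_0)=\dist(p,q)$. (b) Alternatively, note that all paths from $p$ to $q$ of length at most $\dist(p,q)+1$ stay in a compact subset $K$ of $S$, on which the hyperbolic norm of $Df$ is bounded by some $1-\eps<1$; then $\dist(f(p),f(q))\le(1-\eps)\dist(p,q)$. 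With either patch inserted, your proof is complete and coincides with the standard one behind the paper's citation.
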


A straightforward corollary is the so called \emph{monotonicity of the hyperbolic metric}:
\begin{lem}If $S,S'$ are domains admitting a hyperbolic metric with density $\rho_S,\rho_{S'}$ respectively and $S\subset S'$, then $\rho_{S'}(z)<\rho_S(z)$ for all $z\in S$.
\end{lem}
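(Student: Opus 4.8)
The plan is to apply the Schwarz--Pick lemma just stated to the \emph{inclusion} $\iota : S \hra S'$, which is holomorphic (it is the restriction of the identity) because $S \subset S'$, and which maps between two domains carrying hyperbolic metrics by hypothesis.

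The most transparent way to extract a density comparison is to pass to universal covers. Let $\pi : \D \to S$ and $\pi' : \D \to S'$ be universal covering maps, fix $z \in S$, and choose $w \in \pi^{-1}(z)$, $w' \in (\pi')^{-1}(z)$. Lifting $\iota \circ \pi : \D \to S'$ through $\pi'$ produces a holomorphic self-map $\phi : \D \to \D$ with $\pi' \circ \phi = \iota \circ \pi$ and $\phi(w) = w'$. Differentiating this relation at $w$ and using that $\pi, \pi'$ are local biholomorphisms, together with the fact that the hyperbolic densities on $S, S'$ are the push-forwards of the Poincar\'e density $\lambda(\zeta) = 2/(1-|\zeta|^2)$, I would obtain the identity
\[
\frac{\rho_{S'}(z)}{\rho_S(z)} \;=\; \frac{(1-|w|^2)\,|\phi'(w)|}{1-|\phi(w)|^2}.
\]
The classical Schwarz--Pick estimate on the disk bounds the right-hand side by $1$, with equality precisely when $\phi$ is a conformal automorphism of $\D$; this already gives $\rho_{S'}(z) \le \rho_S(z)$.

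To obtain the \emph{strict} inequality asserted in the lemma, I would rule out the equality case. Equality forces $\phi \in \Aut(\D)$, which happens exactly when $\iota$ is a covering map of $S'$, i.e. exactly when $\iota$ is surjective; but $\iota$ is the inclusion, so this means $S = S'$. Since the lemma is invoked with $S$ a proper subdomain of $S'$ (in our applications $S = \Wij \subsetneq W_i = S'$), $\phi$ is not an automorphism, and therefore $\rho_{S'}(z) < \rho_S(z)$ for every $z \in S$.

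The only point requiring care --- and the reason I prefer the universal-cover computation to a direct distance argument --- is the strictness: reading strict \emph{distance} contraction off the Schwarz--Pick dichotomy and then passing to the infinitesimal limit would only yield the weak inequality $\rho_{S'} \le \rho_S$, since a strict inequality need not survive a limit. Working at the level of the derivative $\phi'(w)$ via the disk lemma avoids this loss and pins down equality to the single excluded case $S = S'$.
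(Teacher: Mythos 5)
Correct, and essentially the paper's (implicit) argument: the paper gives no proof, stating the lemma as a straightforward corollary of the Schwarz--Pick lemma applied to the inclusion $S\hra S'$, and your universal-cover computation is exactly the standard way to make that deduction rigorous --- including the equality-case rigidity at the level of $\phi'(w)$, which you rightly note is needed because the distance form of Schwarz--Pick quoted in the paper would only survive the infinitesimal limit as the weak inequality $\rho_{S'}\le\rho_S$. Your one caveat is also accurate: as literally stated with $S\subset S'$ the strict inequality fails when $S=S'$, so the hypothesis must be read as proper inclusion, which is how the lemma is used in the paper (e.g.\ for $\Wij$ properly contained in $W_i$).
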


We are now ready to prove the required lemma. For exponentials this lemma appears already in \cite{SZ3}. We include the proof here, slightly modified, to also cover the case of unicritical polynomials.

\begin{lem}\label{Significance of dynamical partition for Misiurewicz parameters}{\bf{Significance of dynamical partition for Misiurewicz parameters.}}
Let $f$ be a Misiurewicz map, either exponential or a unicritical polynomial.
Then two (pre)periodic dynamic rays which are not preimages of the dynamic rays landing at the singular value land together if and only if they have the same itinerary with respect to the dynamical partition described above.
\end{lem}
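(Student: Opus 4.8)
The plan is to prove the two implications separately. The direction ``land together $\Rightarrow$ same itinerary'' is the soft one and rests on the invariance of co-landing under $f$ together with a crosscut/separation argument; the direction ``same itinerary $\Rightarrow$ land together'' is the substantial one and is where the hyperbolic contraction furnished by the Schwarz--Pick lemma and the monotonicity of the hyperbolic metric enter.

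For the first direction, suppose $g_s$ and $g_{s'}$ land together at a common point $z_0$. By the functional equation $f_c(g^c_s(t))=g^c_{\sigma s}(F(t))$ of \thmref{Existence of dynamic rays}, the rays $f^k g_s$ and $f^k g_{s'}$ then land together at $f^k(z_0)$ for every $k$, so it suffices to prove that two rays sharing a landing point always lie in the same sector $W_i$. The two rays together with $z_0$ and the point at infinity bound a region of the plane; a boundary ray $\beta\subset f^{-1}(g_{s_1})$ separating them would be a crosscut of this region, forcing its landing point $w$ to equal $z_0$. Then $z_0\in f^{-1}(c_0)$, whence $f(g_s)$ would land at the singular value and $g_s$ would be a preimage of one of $g_{s_1},\dots,g_{s_q}$, contrary to hypothesis. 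Hence $g_s$ and $g_{s'}$ occupy the same sector at every level and their itineraries coincide.

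For the converse, assume $g_s$ and $g_{s'}$ have the same itinerary; by \thmref{Landing theorem for periodic dynamic rays} both land, say at $z$ and $z'$, and after replacing the rays by suitable forward iterates we may assume they are periodic of some period $n$, so that $f^n$ fixes both $z$ and $z'$. Suppose for contradiction $z\neq z'$. The key geometric object is the finer partition into the domains $\Wij$ obtained by removing $\bigcup_{k\geq-1}f^k g_{s_1}$: these domains omit the repelling postsingular orbit, since that orbit consists precisely of the landing points of the rays $f^k g_{s_1}$, so each $\Wij$ carries a hyperbolic metric. Having the same itinerary means that at each step the two rays, and an arc $\gamma_0$ joining $z$ to $z'$, may be taken inside the single domain of this partition selected by the common itinerary, on which the appropriate branch of $f^{-n}$ is univalent and maps it strictly inside a smaller domain. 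Letting $\gamma_{m+1}$ be the pullback of $\gamma_m$ under this branch, each $\gamma_m$ again joins the fixed points $z$ and $z'$, and by the Schwarz--Pick lemma together with monotonicity of the hyperbolic metric its hyperbolic length is contracted by a definite factor, because $f^{-n}$ maps the relevant domain compactly inside itself near the repelling cycle. Consequently the hyperbolic length of $\gamma_m$ tends to $0$, and since all $\gamma_m$ remain in a fixed region on which the hyperbolic and Euclidean metrics are comparable, their Euclidean diameter tends to $0$ while they all join $z$ to $z'$; thus $z=z'$, a contradiction.

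The main obstacle is exactly the step controlling the geometry at infinity in the exponential case: the domains $\Wij$ are unbounded, so the hyperbolic metric degenerates near $\infty$ and its comparison with the Euclidean metric fails there. To run the contraction one must confine the arcs $\gamma_m$ to a bounded neighborhood of the repelling postsingular orbit, using that the inverse branches of $f$ contract towards this orbit; verifying that the pullbacks selected by the common itinerary do remain in such a neighborhood, with a contraction factor bounded away from $1$, is the crux of the argument, and it is here that the preperiodicity and repulsivity of the postsingular orbit are essential.
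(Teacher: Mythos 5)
Your first direction (landing together implies equal itineraries) is essentially the paper's argument, fleshed out: the connected set formed by the two rays and their common landing point can only meet the partition boundary at the landing point itself, which would force the rays to be iterated preimages of the rays landing at the singular value, excluded by hypothesis. That part is fine. The second direction, however, has a genuine gap, and it is precisely the case the paper treats separately. Equal itineraries only guarantee that at every step the two landing points lie in the same \emph{coarse} sector $W_i$; they do \emph{not} guarantee that the points lie in the same component $\Wij$ of the finer partition obtained by removing $\bigcup_{n\geq -1}f^n g_{s_1}$. Your construction of the arc $\gamma_0$ \lq\lq inside the single domain of this partition selected by the common itinerary\rq\rq\ silently assumes the latter; if $z$ and $z'$ sit in different components $\Wij$ of the same $W_i$ at every step, no such arc exists and your pullback scheme never starts. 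The paper handles exactly this case by a separate argument: it shows the two points would then share the itinerary of a postsingular periodic point $z$, and rules that out by taking an inverse branch $\Phi$ fixing both $z$ and a putative periodic point $w$ with the same itinerary, observing $\Phi^n\to \{z\}$ on a linearizing neighborhood intersected with $\Wij$ and hence on all of $\Wij$ by the identity principle, contradicting $\Phi(w)=w$. Nothing in your proposal substitutes for this step.

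Your contraction scheme is also heavier than necessary, and its load-bearing step is the one you concede is unverified. You shrink hyperbolic \emph{lengths} of arcs, which requires a contraction factor bounded away from $1$ along all pullbacks plus confinement to a region where hyperbolic and Euclidean metrics are comparable --- you flag this confinement as \lq\lq the crux\rq\rq\ but do not prove it, and near the boundary fixed points the inclusion of domains is not compact, so uniformity is genuinely in doubt. The paper's fixed-point trick makes all of this unnecessary: since both landing points $w_1,w_2$ are fixed by the same composition of inverse branches $\Psi$, a \emph{single} application of strict contraction (Schwarz--Pick plus monotonicity, available as soon as at one step both points lie in the same $\Wij$) gives $\dist(\Psi(w_1),\Psi(w_2))<\dist(w_1,w_2)$, an immediate contradiction with $\Psi(w_i)=w_i$; no uniform factor, no Euclidean comparison, no geometry at infinity is needed. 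Finally, your reduction \lq\lq we may assume the rays are periodic\rq\rq\ proves only that the forward images land together; to descend to the original preperiodic rays you must pull back both rays by the \emph{same} inverse branch (legitimate because they have equal itineraries and are not preimages of the rays at the singular value), a short step the paper includes and you omit.
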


\begin{proof}
Endow each of the domains $W_i$ and of the domains $\Wij$ with the corresponding hyperbolic metric.

For any domain $W_i$, let $f^{-1}_i$ be the inverse branch of $f$ mapping $\C\setminus g_{s_1}$ into $W_i$.
By the Schwarz Lemma, for any $i,i'$ the map $f^{-1}_{i'}|_{W_{i}}$ contracts the hyperbolic metric from $W_{i}$ to $W_{i'}$. By monotonicity of the hyperbolic metric, the  hyperbolic metric of each $\Wij$ is bigger than the hyperbolic metric of $W_i$, so $f^{-1}_i|_{\Wij}$ contracts the hyperbolic metric from $\Wij$ to $W_{i'}$.
 
Let us start by considering any two periodic dynamic rays which have the same itinerary with respect to the dynamic partition described in the previous section, and let $w_1$, $w_2$ be their periodic landing points. Up to selecting inverse branches of $f$, $w_1$, $w_2$ are  both  fixed under some $M$-th iterate $\Psi:=f_{a_1}^{-1}\circ\cdots\circ f_{a_M}^{-1}$ of the inverse of $f$, where the sequence $\ov{a_1..a_M}$ is equal to the common itinerary of $w_1$ and $w_2$.

If for at least one $j$, $1\leq j \leq M$, the $j$-th inverse iterate of $w_1$ and $w_2$ belong to the same $W_{i,a_j}$, and $\dist(w_1,w_2)\neq0$, we have that $\dist(\Psi(w_1),\Psi(w_2))<\dist(w_1,w_2)$,   which is a contradiction as  $\Psi(w_1)=w_1$ and $\Psi(w_2)=w_2$. 
 
This proves the theorem for periodic rays unless iterates of $w_1,w_2$ always belong to the same $W_i$ but  to different $\Wij$s.
 
So suppose that $w_1$ and $w_2$ belong to the same $W_i$ but to different $\Wij$. We will show that in this case $w_1, w_2$ have the same itinerary as a point of the postsingular periodic orbit and that this cannot happen.

Let us show that $w_1, w_2$ have the same itinerary as one of the postsingular periodic points. 
As $w_1, w_2$ belong to different $W_{a_1, i}$, for some postsingular periodic point $z$ one of them, say $w_1$, has to belong to one of the internal sectors defined in Section \ref{Combinatorics and Ray Portraits} (as they cannot be both in the external sector or they would belong to the same $W_{a_1, i}$). This implies that the first entry in the itinerary of $w_1$ (and hence in the itinerary of $w_2$, because the two itineraries are equal) coincides with the first entry in the itinerary of $z$. 
Using the fact that $f^j(w_1), f^j(w_2)$ belong to different $W_{a_j, i}$ for any $j$, the same  reasoning can be repeated to show that  $f^j(w_1), f^j(w_2)$ belong to the same $S_{a_j}$ as $f^j(z)$, hence that $w_1$ and $w_2$ have the same itinerary as $z$.
 
Remains to prove that no periodic point $w$ can have the same itinerary as a postsingular periodic point $z$. Suppose by contradiction  that this is the case and let $W_{i,j}$ be the domain such that $z,w\in\ov{W_{i,j}}$. 
As $z$  and $w$ have the same itinerary, we can find an inverse branch $\Phi$ of $f^{-k}$ fixing both $z$ and $w$, for some $k$. As $W_{i,j}$ does not intersect the postsingular set, $\Phi$ is well defined in all of $W_{i,j}$. If $L$ is a linearizing neighborhood of $z$, as $\Phi$ fixes $z$,   $\Phi^n\ra\{z\}$ on $W_{i,j}\cap L$, hence  $\Phi^n\ra\{z\}$ in $W_{i,j}$ by the identity principle, contradicting the fact that $\Phi(w)=w$.
 
%%%%%%%%%%%%%%%%%%%%%%%%%%%%%%%%%%%%%5

%

% Connect $w_1$ to $z$ inside the domain $\Wij$ which contains $w_1$ through a hyperbolic geodesic $\gamma$, which has infinite hyperbolic length because $z$ belongs to $\partial \Wij$. Consider a linearizing neighborhood $V$ for $z$, let $\gamma_2$ be the part of $\gamma$ contained in $V$ and $\gamma_1$ the part of $\gamma$ contained in $\C\setminus V$. As $w_1$ and $z$ belong to the same $\overline{\Wij}$, we can select inverse branches of $f^{-M}$ fixing both $z$ and $w_1$ under which the hyperbolic  length of the preimage of $\gamma_1$ decreases by the previous argument, and the euclidean length of the preimage of $\gamma_2$ decreases  because it is contained in the linearizing neighborhood.
% 
%  As the distance between preimages of $\gamma_1$ and $z$ eventually decreases, and $z$ belongs to $\partial \Wij$, the euclidean length of preimages of $\gamma_1$ also eventually decreases, and the euclidean length of preimages of $\gamma$ eventually goes to zero contradicting the fact that $z$ and $w_1$ are distinct points.
   
Now let us consider preperiodic rays. If two preperiodic rays have the same itinerary, their periodic images also have the same itinerary, hence land together by previous part; and since the preperiodic rays are not preimages of the rays landing at the singular value, and they have the same itinerary, we can take pullbacks using the same branch for both, so that they keep landing together. 

On the other side if two rays land together they form a connected set, which never intersects the original partition under iterates of $f$, so they always belong to the same domain of the partition.

\end{proof} 

%%%%%%%%%%%%%%%%%%%%%%%%%%%%%%%%%%%%%%%%%%%%%%%%%%%%%%%%%%%%%%%%%%%%%%%%%%
%%%%%%%%%%%%%%%%%%%%%%%%%%%%%%%%%%%%%%%%%%%%%%%%%%%%%%%%5%%%%%%%%%%%%%%%%%

\section{Combinatorics and Ray Portraits}\label{Combinatorics and Ray Portraits}

This section introduces orbit portraits (in analogy with \cite{Mi} for polynomials and  \cite{RS2} for exponentials) and presents some theorems about the correspondence between  parameter rays with (pre)periodic angles for polynomials and  parameter rays with (pre)periodic addresses for exponentials.  
\begin{defn}
We call a \emph{ray pair}  any  couple of dynamic/parameter rays landing together whose addresses are (pre)periodic. % When we refer to a ray pair as a couple of addresses or angles, we mean the ray pair corresponding to that couple of angles.
\end{defn}
\begin{defn}
Let $ \{ z_i \} _{i=1\ldots n}$ be a repelling or parabolic periodic orbit of period $n$ in $\Pi_c$ ($\Pi^D_c$ resp.), and \[\AA_i:=\{r\in\SS \text{ ($\Suno$ resp.)}, r \text{ is periodic and } g^c_r \text{ lands at } z_i \}.\] Then $\PP=\{ \AA_1,\ldots,\AA_n \}$ is said to be the \emph{combinatorial orbit portrait} for $\{z_i \}$.

% $\PP$ is called \emph{essential} if each $A_i$ contains at least two addresses, \emph{satellite} if the addresses form only one cycle and \emph{primitive} otherwise. 
\end{defn}
Similarly,
\begin{defn}
Let $ \{ z_i \} _{i=1\ldots n}$ be a repelling or parabolic periodic orbit of period $n$ in $\Pi_c$ ($\Pi^D_c$ resp.), and $A_i:=\{g^c_r, g^c_r \text{ lands at } z_i \}$. Then $P=\{ A_1,\ldots\,A_n \}$ is said to be the \emph{orbit portrait} for $\{z_i \}$.

% $\PP$ is called \emph{essential} if each $A_i$ contains at least two addresses, \emph{satellite} if the addresses form only one cycle and \emph{primitive} otherwise. 
\end{defn}
The next two lemmas are given by Lemma 3.2 in \cite{RS2} for exponentials, and by Lemma 2.3 in \cite{M} for polynomials. 

\begin{lem}{\bf{Properties of combinatorial orbit portraits. }}
Given a combinatorial orbit portrait $\PP$, every $\AA_i\in\PP$ consists of a finite number of periodic addresses (angles resp.), and the shift map sends $\AA_i$ bijectively onto $\AA_{i+1}$. All addresses share the same period $qn$.
\end{lem}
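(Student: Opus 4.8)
Write $z_{i+1}=f_c(z_i)$ with indices taken mod $n$, and recall that by \thmref{Existence of dynamic rays} a periodic address $r$ names a unique periodic ray $g^c_r$ with $f_c(g^c_r)=g^c_{\sigma r}$, which lands by \thmref{Landing theorem for periodic dynamic rays}. The whole argument is just the bookkeeping of how $f_c$, equivalently the shift $\sigma$, acts on the rays landing at the orbit; for unicritical polynomials one reads $\sigma$ as multiplication by $D$ on $\SS_D$ and the proof is verbatim. First I would show that $\sigma$ maps $\AA_i$ into $\AA_{i+1}$ injectively. If $r\in\AA_i$ then $g^c_r$ lands at $z_i$, so $f_c(g^c_r)=g^c_{\sigma r}$ lands at $f_c(z_i)=z_{i+1}$, and $\sigma r$ is again periodic; hence $\sigma r\in\AA_{i+1}$. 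For injectivity I use that $z_i$ is repelling or parabolic, so $f_c$ is an orientation-preserving local biholomorphism at $z_i$ (its derivative there is nonzero); it therefore carries distinct rays landing at $z_i$ to distinct rays landing at $z_{i+1}$, and distinct rays have distinct addresses. Composing around the cycle gives injections $\AA_1\hookrightarrow\AA_2\hookrightarrow\cdots\hookrightarrow\AA_n\hookrightarrow\AA_1$.

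Next I would establish finiteness. The first-return map $R:=f_c^{n}$ fixes $z_1$, is an orientation-preserving local biholomorphism there, and permutes the rays of $\AA_1$ while preserving the cyclic order in which they land at $z_1$, namely the vertical/cyclic order of their addresses. The content of this step is to bound $|\AA_1|$: one shows, by linearizing $R$ at the repelling point (resp. passing to Fatou coordinates in the parabolic case), that only finitely many periodic rays can land at $z_1$; this is the local input recorded in \cite{RS2} for exponentials and in \cite{M} for polynomials. Granting $|\AA_i|<\infty$, the chain of injections forces $|\AA_1|\le|\AA_2|\le\cdots\le|\AA_n|\le|\AA_1|$, so all cardinalities coincide, say $|\AA_i|=q$, and each injection $\sigma\colon\AA_i\to\AA_{i+1}$ is in fact a bijection.

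Finally the common period. Now $R=\sigma^{n}$ is a cyclic-order-preserving bijection of the finite cyclically ordered set $\AA_1$, hence a rotation $x\mapsto x+k \pmod{q}$ in the labelling induced by that order; every element of $\AA_1$ therefore lies in an $R$-orbit of the same size $q/\gcd(k,q)$, so all addresses of $\PP$ automatically share one common period. To identify this period as exactly $qn$ I must show the rotation is a single $q$-cycle, i.e.\ $\gcd(k,q)=1$. This is the combinatorial rotation-number computation: using that $\sigma$ strictly expands address-differences, the $q$ sectors cut out at $z_1$ must be permuted so that the denominator of the rotation number equals their number $q$. Then each address has $\sigma^{n}$-period $q$ and hence $\sigma$-period $qn$.

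\textbf{Main obstacle.} Everything except two genuinely dynamical inputs is formal: the two inputs are the finiteness of $\AA_1$ and the single-cycle (transitivity) of the return map $R$ on it, which together upgrade \lq\lq same period\rq\rq\ to \lq\lq period $qn$ with $q=|\AA_i|$\rq\rq. Both rest on the local picture at the repelling/parabolic point, and the difficulty specific to the exponential case is that landing rays may spiral into $z_1$, so one cannot argue via limiting directions as for polynomials and must instead invoke the local structure established in \cite{RS2}.
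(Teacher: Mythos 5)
Your write-up divides the lemma into a formal part and two dynamical inputs, and the formal part is handled correctly: $\sigma(\AA_i)\subseteq\AA_{i+1}$ because $f_c(g^c_r)=g^c_{\sigma r}$ and the landing point moves along the orbit; injectivity because the multiplier of a repelling or parabolic cycle is nonzero, so no orbit point is critical and $f_c$ is a local biholomorphism there; and, \emph{granting finiteness}, the chain of injections around the cycle gives bijectivity, while the order-preserving rotation structure of the return map gives a common period. Note, for calibration, that the paper itself offers no proof of this lemma: it imports it wholesale, citing Lemma 3.2 of \cite{RS2} for exponentials and Lemma 2.3 of Milnor (in fact the orbit-portraits account \cite{Mi} rather than the book \cite{M}) for polynomials. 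To the extent that you also cite those sources, your proposal matches the paper; but read as a blind proof it is circular at exactly the two steps that constitute the entire content of the lemma, and the mechanisms you sketch for them would not work.

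Concretely: (i) \emph{Finiteness.} Linearizing $R=f_c^{n}$ at $z_1$ cannot bound $|\AA_1|$: the model map $w\mapsto\lambda w$ has uncountably many invariant curves landing at $0$ (logarithmic spirals), so no purely local argument excludes infinitely many co-landing rays. For degree-$D$ polynomials the true mechanism is arithmetic, not local: once all angles share a period $p$, there are at most $D^p-1$ angles of that period. For exponentials even this is unavailable, since $\SS$ contains infinitely many addresses of any given period (already $\overline{j}$ for every $j\in\Z$ in period one); one needs a genuinely combinatorial constraint on the addresses of rays landing at a common point, which is what the proof in \cite{RS2} supplies — and your own closing remark about spiralling undercuts, rather than supports, the linearization route. (ii) \emph{Exact period $qn$.} Cyclic-order preservation plus expansion of $\sigma$ does not force $\gcd(k,q)=1$: a rotation by $k$ with $\gcd(k,q)=d>1$ preserves cyclic order and is entirely compatible with an expanding shift, and would give every address exact period $qn/d$. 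Ruling this out requires the singular/critical value: one singles out the characteristic sector (the distinguished smallest sector containing the singular value, cf.\ Lemma 3.3 of \cite{RS2} and the sector-width argument of \cite{Mi}) and tracks how sector widths transform under the dynamics to force the return map to act as a single $q$-cycle. As it stands, your argument proves injectivity of $\sigma\colon\AA_i\to\AA_{i+1}$ unconditionally, and bijectivity plus \emph{some} common period conditionally on finiteness — a strictly weaker statement than the lemma.
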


\begin{lem}{\bf{Properties of  orbit portraits.}}
Given an orbit portrait $P$, every $A_i\in P$ consists of a finite number of dynamic rays, and $f$ maps $A_i$ bijectively onto $A_{i+1}$. All dynamic rays in the portrait are periodic with the same period $qn$.
\end{lem}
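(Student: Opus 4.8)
The plan is to derive the three assertions -- finiteness of each $A_i$, the bijectivity of $f\colon A_i\to A_{i+1}$, and the common period -- from the functional equation for dynamic rays together with the local dynamics at the repelling or parabolic orbit, following the scheme of Lemma~2.3 in \cite{M} and Lemma~3.2 in \cite{RS2}.

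First I would set up the permutation structure. By the functional equation $f_c(g^c_s(t))=g^c_{\sigma s}(F(t))$ of Theorem~\ref{Existence of dynamic rays}, the image under $f$ of the ray $g^c_r$ is again a dynamic ray, namely $f(g^c_r)=g^c_{\sigma r}$, and by continuity of $f$ it lands at $f(z_i)=z_{i+1}$; this already gives a well-defined map $A_i\to A_{i+1}$, $g^c_r\mapsto g^c_{\sigma r}$. Since $f$ has no critical point on the orbit -- for $e^z+c$ the derivative $e^z$ never vanishes, and for $z^D+c$ the critical point $0$ is not periodic because the Misiurewicz orbit is strictly preperiodic -- the map $f$ is a local homeomorphism near each $z_i$, so distinct rays in $A_i$ have distinct images and the map is injective. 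Surjectivity follows by lifting: applying the inverse branch of $f$ that sends $z_{i+1}$ to $z_i$ to a terminal segment of a ray landing at $z_{i+1}$ produces a ray landing at $z_i$ mapping onto it. Hence $f\colon A_i\to A_{i+1}$ is a bijection.

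The main obstacle is the finiteness of the $A_i$, and I would concentrate the work there, at a single point $z_1$, which is a repelling or parabolic fixed point of the first-return map $g:=f^n$. In a Koenigs linearizing coordinate $w=\phi(z)$ in the repelling case (so that $\phi g\phi^{-1}(w)=\lambda w$ with $|\lambda|>1$), or in a Fatou coordinate in the parabolic case, the rays landing at $z_1$ become pairwise disjoint arcs accumulating at the origin that are permuted by the model map. Choosing a fundamental domain for this local action and using that distinct rays stay uniformly separated -- a consequence of the asymptotics $g^c_s(t)=2\pi i s_1+t+o(e^{-t})$ of Theorem~\ref{Existence of dynamic rays} together with the injectivity just established -- one sees that only finitely many rays can meet the fundamental domain, so $A_1$ is finite. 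I expect this separation and compactness estimate to be the genuinely delicate point, exactly as in the polynomial case.

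Once $A_1$ is finite, periodicity and the common period follow formally. The return map $g=f^n$ is an injective self-map of the finite set $A_1$, hence a bijection, so every ray in $A_1$ is periodic under $g$ and therefore under $f$; in particular no ray with non-periodic address can land at $z_1$. Moreover $g$ is an orientation-preserving local homeomorphism fixing $z_1$, so it preserves the cyclic order of the rays of $A_1$ and thus acts on $A_1$ as a rotation; a rotation of a finite cyclically ordered set has all of its orbits of a single common length $q$, whence every ray in the portrait has $f$-period exactly $qn$. Running the same argument with the shift $\sigma$ in place of $f$ and addresses in place of rays yields the parallel statement for combinatorial orbit portraits.
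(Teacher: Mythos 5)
The paper itself offers no proof of this lemma: it is quoted verbatim from Lemma 3.2 of \cite{RS2} (exponentials) and Lemma 2.3 of \cite{M} (polynomials; cf.\ also \cite{Mi}), so your proposal has to be measured against those arguments. Your first and last steps are sound and follow the standard scheme: the functional equation plus continuity of $f$ gives the well-defined map $A_i\to A_{i+1}$, local injectivity of $f$ at $z_i$ gives injectivity (though your reason that $0$ is off the orbit is misplaced --- the lemma concerns an arbitrary repelling or parabolic orbit, not the postsingular orbit of a Misiurewicz map; the correct reason is that a cycle through the critical point would have multiplier $0$, hence be superattracting), pulling back terminal segments gives surjectivity, and once finiteness is known, the return map is a cyclic-order-preserving permutation of the finite set $A_1$, hence a combinatorial rotation, so all orbits have a single length $q$ and all rays have period $qn$.

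The genuine gap is exactly where you put the weight: finiteness of $A_1$, and the mechanism you propose cannot deliver it. The asymptotics $g^c_s(t)=2\pi i s_1+t+o(e^{-t})$ are statements about $t\to\infty$: they separate rays near infinity and say nothing about the mutual position of the ray arcs at small potentials, which are precisely the pieces that cross a fundamental annulus of the linearization. Disjointness does not help either, since infinitely many pairwise disjoint arcs can perfectly well cross a fundamental annulus, so no compactness count of the kind you sketch is available. The cited proofs extract finiteness and periodicity together from the cyclic order instead: the return map permutes the rays landing at $z_1$ preserving their cyclic order at the landing point, so the induced action on angles/addresses ($t\mapsto d^n t$, resp.\ $\sigma^n$) has a well-defined combinatorial rotation number; an irrational rotation number is excluded by a sector (quotient-torus) estimate in the linearizing coordinate, because the sectors between adjacent rays would form an infinite family of disjoint sectors whose widths the dynamics controls; and a rational rotation number, combined with the rigidity of the model map, forces every element of $A_1$ to be periodic of one common period. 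For polynomials, finiteness is then automatic, since only finitely many angles satisfy $d^{nq}t\equiv t \pmod 1$; but for exponentials this last step genuinely fails --- there are infinitely many periodic addresses of any given period, the entries ranging over $\Z$ --- so your closing remark that one can ``run the same argument with the shift in place of $f$'' hides a second gap: $\SS$ is neither compact nor cyclically ordered, and \cite{RS2} needs an additional argument to bound the addresses that can occur in $A_1$.
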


\begin{rem} In a more abstract way, we will speak of a combinatorial orbit portrait without specifying a periodic orbit $\{z_i\}$; such a combinatorial object is not necessarily \emph{realized}  (i.e occurs for some parameter) as an actual orbit portrait for some polynomial or exponential map.
\end{rem}

 The following theorems will show the  relation between which combinatorial orbit portraits are realized for exponential maps and which ones are realized for unicritical polynomials. As there are necessary and sufficient conditions for a combinatorial orbit portrait to be realized for polynomials, this give unique and sufficient conditions for a combinatorial portrait to be realized for exponentials.
 
Let us first state a correspondence between Misiurewicz parameters for exponential maps and for unicritical polynomials:
\begin{thm}\label{polyexp Misiurewicz}{\bf Misiurewicz addresses for exponentials and polynomials.}
The parameter rays $G_{s_1},\dots ,G_{s_q}$ land together at some exponential Misiurewicz parameter in the exponential parameter plane if and only if  for each family of unicritical polynomials of sufficiently high degree D the parameter rays with the same addresses land together at some polynomial Misiurewicz parameter.
\end{thm}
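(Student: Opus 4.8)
The plan is to transport both statements into the dynamical plane and then recognize that each is equivalent to a single condition on the addresses $s_1,\dots,s_q$, phrased purely in terms of the shift $\sigma$ and the combinatorial partition of the address space, and hence one that does not distinguish the exponential from the polynomial setting.

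First I would reduce to the dynamical plane on both sides. By \thmref{Classification of Misiurewicz exponential maps} the address $s_1$ determines a unique postsingularly finite exponential map $f=f_{c_0}$, and by the classical polynomial analogue (\cite{PR}, \cite{DH}) it likewise determines, for each degree $D$, a unique Misiurewicz polynomial. Applying the dynamical--parameter correspondence \thmref{Correspondence between dynamical and parameter plane at Misiurewicz points} on the exponential side and \thmref{Dyn par correspondence polys} on the polynomial side, the parameter rays $G_{s_1},\dots,G_{s_q}$ land together at a Misiurewicz parameter exactly when the dynamic rays $g_{s_1},\dots,g_{s_q}$ all land at the singular, respectively critical, value of the corresponding map. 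It therefore suffices to show that this dynamical statement is governed by the same combinatorial condition in both families.

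Next I would make the dynamical statement combinatorial by means of \lemref{Significance of dynamical partition for Misiurewicz parameters}. The rays $g_{s_j}$ land at the singular value and are thus exactly the rays the lemma excludes, so I would apply it instead to the forward images $g_{\sigma s_1},\dots,g_{\sigma s_q}$. Because the postsingular orbit is strictly preperiodic, one checks by a preperiod count that no $\sigma s_j$ is the address of a ray landing at the singular value nor a preimage of one, so the lemma applies and tells us that these images land together if and only if the shifted addresses $\sigma s_1,\dots,\sigma s_q$ share an itinerary with respect to the dynamical partition. Since $g_{s_1}$ lands at the singular value by the construction of $c_0$, the common landing point of the images is forced to be $f(c_0)$; pulling back one step along the inverse branch selected by the leading symbol of each address then returns $g_{s_j}$ to the singular value precisely when these leading symbols agree. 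In this way the dynamical statement becomes a purely combinatorial condition $\mathcal{C}(s_1,\dots,s_q)$ --- the agreement of the leading symbols together with the common itinerary of the shifts under the partition by $\sigma$-preimages of $s_1$ --- expressed entirely through $\sigma$ and the combinatorial space.

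Finally I would transfer $\mathcal{C}$ across the combinatorial correspondence. The addresses $s_1,\dots,s_q$ are preperiodic with uniformly bounded entries, say $|s_j|\le N$, so for every $D>2N+2$ they embed into $\SS_D$ by the identity, and under this embedding the partition of $\SS_D$ by $\sigma$-preimages of $s_1$ and the resulting itinerary map agree with their counterparts in $\SS$ on this finite set of addresses. Hence $\mathcal{C}$ holds in $\SS$ if and only if it holds in $\SS_D$ for all sufficiently large $D$, and chaining this with the two reductions above gives the stated equivalence. I expect the crux to be the middle step: the rays landing at the singular or critical value are exactly those \lemref{Significance of dynamical partition for Misiurewicz parameters} does not cover, so one must verify carefully that the forward-image and pull-back device really returns each ray to the singular value and not to some other preimage --- which is where the uniqueness in \thmref{Classification of Misiurewicz exponential maps} and the matching of leading symbols enter --- and the polynomial case demands extra care because the critical point keeps $f$ from being locally injective there.
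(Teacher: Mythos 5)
Your overall architecture coincides with the paper's: reduce both statements to the dynamical plane via \thmref{Classification of Misiurewicz exponential maps}, \thmref{Correspondence between dynamical and parameter plane at Misiurewicz points} and \thmref{Dyn par correspondence polys}, characterize co-landing through itineraries with respect to the dynamical partition via \lemref{Significance of dynamical partition for Misiurewicz parameters}, and transport the itinerary condition between $\SS$ and $\SS_D$ by the identity embedding for $D$ large (your symmetric ``each side is equivalent to one combinatorial condition'' framing even handles both directions at once, where the paper writes out only one). However, the step you yourself flag as the crux rests on a misreading, and your workaround for it contains a false claim. The misreading: \lemref{Significance of dynamical partition for Misiurewicz parameters} does not exclude the rays $g_{s_1},\dots,g_{s_q}$ landing at the singular value; it excludes \emph{preimages} of such rays, i.e.\ rays whose forward orbit meets the partition boundary. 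Since the postsingular orbit is strictly preperiodic, $\sigma^k s_j = s_i$ is impossible for $k\geq 1$, so the $g_{s_j}$ have well-defined itineraries and the lemma applies to them directly --- which is exactly how the paper uses it, with no need to pass to $g_{\sigma s_1},\dots,g_{\sigma s_q}$ and pull back; your detour merely re-proves the preperiodic case already contained in the lemma's own proof.

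The genuine error is inside that detour: the assertion that the pullback ``returns $g_{s_j}$ to the singular value precisely when these leading symbols agree'' is wrong as stated. The relevant inverse branches are indexed by the sectors $W_i$ of the dynamical partition (the plane cut along the preimages of $g_{s_1}$), not by first address entries, and a single sector contains rays whose first address entries take two \emph{consecutive} values (the sector between $g_{m s_1}$ and $g_{(m+1)s_1}$ contains rays with first entry $m$ or $m+1$; compare the opening of the proof of \thmref{Correspondence of characteristic rays}, where the characteristic sector is bounded by rays whose first entries differ by one). So rays co-landing at $c_0$ need not share their leading address symbol; what they share is the first \emph{itinerary} entry, namely the label of the sector $W_0$ containing $c_0$, and it is the single branch $f^{-1}_0$ into that sector that carries all the $g_{\sigma s_j}$ back to rays landing at $c_0$. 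Your condition $\mathcal{C}$ must therefore be ``common full itinerary,'' not ``equal leading address symbols plus common itinerary of the shifts'' --- as written, $\mathcal{C}$ is strictly stronger and the equivalence would fail whenever the co-landing addresses straddle a boundary ray asymptotically. With that correction your argument closes and collapses back into the paper's. Finally, your worry about the polynomial critical point is unfounded: the rays land at the critical \emph{value}, and the pullback takes place away from the critical point, since the critical orbit of a Misiurewicz polynomial is non-escaping while dynamic rays consist of escaping points.
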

\begin{proof}
 Let  $G_{s_1},\dots ,G_{s_q}$ be the parameter rays landing together at some Misiurewicz parameter $c_0$ in exponential parameter plane. Then the dynamic rays with the corresponding addresses $g_{s_1},\dots ,g_{s_q}$ all have the same itineraries with respect to the dynamical partition induced by $g_{s_1}$ in $\Pi_{c_0}$, because together with $c_0$ they form a connected set whose orbit cannot intersect the boundaries of the dynamical partition. The address $s_1$ is preperiodic, so it is a sequence over finitely many values, so for polynomials of sufficiently high degree $D$ it represents the D-adic expansion of the  angle of some parameter ray. As  parameter rays with (pre)periodic angles  are well known to land for unicritical polynomials, there is a  Misiurewicz parameter $c_1$ depending on $D$ which is the landing point of the corresponding parameter ray. 
 
  By Theorem \ref{Correspondence between dynamical and parameter plane at Misiurewicz points}, the dynamic ray $g_{s_1}$ lands at $c_1$ in the polynomial dynamical plane for $f_{c_1}$.
  
 All the polynomial dynamic rays $g_{s_2},\dots, g_{s_q}$ also have the same itinerary with respect to the partition induced by $g_{s_1}$ so by Lemma \ref{Significance of dynamical partition for Misiurewicz parameters} they all land together in the dynamical plane for $f_{c_1}$. Then by \ref{Correspondence between dynamical and parameter plane at Misiurewicz points} the corresponding parameter rays land together at $c_1$ in the polynomial parameter plane.
\end{proof}
We will now define characteristic rays. We will state the definitions for exponentials, the corresponding definitions for polynomials can be inferred immediately.
\begin{defn}
Given an orbit portrait, the \emph{characteristic rays} are the rays $g_{s_1}, g_{s_2}$ which, together with their common endpoint, separate the singular value from all other rays in the portrait; compare with Lemma 3.3 in \cite{RS2} for existence and uniqueness.

  The \emph{characteristic sector} of an orbit portrait is given by the set of points enclosed between $g_{s_1}$ and  $g_{s_2}$.
\end{defn}

\begin{defn}
A \emph{characteristic ray pair} is  a pair of  parameter rays $G_{s_1}, G_{s_2}$ with periodic addresses landing together in parameter plane.
We will say that the domain enclosed by $G_{s_1}$ and $G_{s_2}$ (together with their commond endpoint) and which contains the rays of addresses between $s_1$ and $s_2$ is the \emph{wake} defined by $G_{s_1}, G_{s_2}$. 

\end{defn}
\begin{prop}{\bf Correspondence of bifurcations.}\label{Correspondence of bifurcations}
A parameter $c$ belongs to the wake defined by two parameter rays   $G_{s_1}, G_{s_2}$ together with their common landing point if and only if the dynamic rays $g_{s_1}, g_{s_2}$ land together in the dynamical plane $\Pi_c$ and are the characteristic rays for some orbit portrait in $\Pi_c$. We will call $s_1,s_2$ a pair of  \emph{characteristic addresses}.
\end{prop}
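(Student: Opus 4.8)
The plan is to identify the open wake bounded by $G_{s_1},G_{s_2}$ with the set $\mathcal{W}^{\mathrm{dyn}}$ of parameters $c$ for which $g_{s_1},g_{s_2}$ land together as the characteristic rays of an orbit portrait, and to do this by combining stability of periodic ray portraits with the parabolic bifurcation theory of \cite{RS2}. Since $s_1,s_2$ are periodic, the parameter rays $G_{s_1},G_{s_2}$ land (by \cite{S3}) at parabolic parameters; because they form a characteristic ray pair they share a common landing point $c_0$, which I would take as the root of the wake. At $c_0$ the orbit portrait is realized with a parabolic cycle, with $g_{s_1},g_{s_2}$ landing at the parabolic point as its characteristic rays.

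First I would show that $\mathcal{W}^{\mathrm{dyn}}$ is open. For $c\in\mathcal{W}^{\mathrm{dyn}}$ the cycle carrying the portrait is repelling by Theorem \ref{Landing theorem for periodic dynamic rays}, hence moves holomorphically under perturbation of $c$; the periodic dynamic rays $g_{s_1},g_{s_2}$ continue to land at the perturbed cycle (persistence of landing, as exploited in \cite{R1}), and the combinatorial data---in particular which rays are characteristic---is preserved because the vertical order of rays at infinity is locally constant. This yields openness and shows that the portrait, together with the identity of the characteristic rays, is locally constant on $\mathcal{W}^{\mathrm{dyn}}$.

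For the forward implication I would start at the root $c_0$ and bifurcate into the wake: by \cite{RS2} the parabolic cycle at $c_0$ splits into a repelling cycle as $c$ enters the wake, the portrait persisting with $g_{s_1},g_{s_2}$ as characteristic rays. The key point is that the singular value $c$ stays in the characteristic sector throughout; were it to leave, $c$ would have to cross one of the dynamic rays $g_{s_1}^c,g_{s_2}^c$, and by the defining correspondence $c\in G_s \iff c\in g_s^c$ of Theorem \ref{Existence of parameter rays} this happens exactly when $c$ crosses $G_{s_1}$ or $G_{s_2}$, i.e.\ exits the wake. Conversely, if $g_{s_1},g_{s_2}$ are characteristic in $\Pi_c$ then $c$ lies in their characteristic sector; the set of such $c$ is open, with boundary consisting of parabolic parameters (where the cycle degenerates, governed again by \cite{RS2}) together with the parameter rays $G_{s_1},G_{s_2}$, so the connected component of $c$ is precisely the wake.

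The hard part will be the precise matching of the dynamical-plane position of the singular value with the parameter-plane position of $c$---the assertion that the parameter wake and the dynamical characteristic sector correspond under $c\in G_s \iff c\in g_s^c$---together with the control of what happens along the boundary of $\mathcal{W}^{\mathrm{dyn}}$. Both rest on the parabolic bifurcation analysis of \cite{RS2}, which guarantees that the only way the characteristic ray pair can cease to land together as one moves in parameter space is through the parabolic landing point of $G_{s_1},G_{s_2}$; ruling out any other degeneration of the portrait is where the main care is needed.
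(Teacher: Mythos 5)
Your plan follows essentially the same route as the paper, which offers no independent argument for this proposition: it is stated there with only a remark that it follows from Proposition 5.4 of \cite{RS2} when $c$ is hyperbolic or parabolic and extends to all other parameters by holomorphic motions as in \cite{R1}. Your two main steps --- parabolic bifurcation at the root governed by \cite{RS2}, and persistence of the repelling cycle, ray landing, and characteristic position under holomorphic motion as in \cite{R1}, with the crossing of the wake boundary matched to the crossing of the dynamical sector via $c\in G_s \iff c\in g^c_s$ --- are exactly these two ingredients, and like the paper you correctly delegate the genuinely hard boundary control of the portrait to \cite{RS2}.
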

As corollary, $G_{s_1}, G_{s_2}$ is  a characteristic ray pair, landing together at some parameter $c$,  if and only if the dynamic rays $g_{s_1}, g_{s_2}$ are the characteristic rays of some orbit portrait in $\Pi_c$.
Remark: The theorem above is not explicitly stated in this way. It follows from Proposition 5.4 in \cite{RS2} when $c$ is hyperbolic or parabolic, and can be extended by holomorphic motions to all other parameters similarly as in \cite{R1}.

\begin{thm}\label{Correspondence of characteristic rays}{\bf Correspondence of characteristic rays.}
A pair of addresses is characteristic for exponentials if and only if it is characteristic for some unicritical polynomial of some degree $D$. In other words, given a combinatorial orbit portrait $\PP$, there exists  an exponential map $f_{\tilde{c}}$ realizing $\PP$ if and only if there is a unicritical polynomial $P^D_c$ realizing $\PP$, for some sufficiently high degree $D$.  
\end{thm}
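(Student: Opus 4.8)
The plan is to reduce the statement about characteristic ray pairs to the already-established correspondence of Misiurewicz landing data in Theorem \ref{polyexp Misiurewicz}, using the corollary to Proposition \ref{Correspondence of bifurcations} as the bridge between ``characteristic'' and ``landing together at a parameter.'' The key observation is that, by that corollary, a pair of periodic addresses $s_1, s_2$ is characteristic (for exponentials, or for $P^D_c$) precisely when the parameter rays $G_{s_1}, G_{s_2}$ land together at a common parameter. So it suffices to show that $G_{s_1}, G_{s_2}$ land together in the exponential parameter plane if and only if the parameter rays with the same addresses land together in $\Pi_P^D$ for all sufficiently high $D$. Since the addresses are periodic, by the theory of \cite{S3} these parameter rays land at parabolic parameters, which sit on the boundaries of wakes; I would like to transfer the ``landing together'' statement across families.

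First I would handle the periodic/parabolic case directly. For a characteristic ray pair of periodic addresses, the common landing parameter is parabolic, and by the parabolic bifurcation theory of \cite{RS2} the combinatorics of the bifurcation is controlled by the orbit portrait realized just inside the wake. The strategy is to perturb slightly into the wake to a parameter where the orbit portrait $\PP$ is realized at a repelling cycle, and then invoke the Misiurewicz correspondence indirectly: a characteristic ray pair bounds a wake, and wakes are detected by which combinatorial orbit portraits are realized. Concretely, I would argue that $s_1, s_2$ being characteristic for exponentials means $\PP$ is realized as an actual orbit portrait for some exponential $f_{\tilde c}$ (by Proposition \ref{Correspondence of bifurcations} and its corollary), and I must produce a polynomial $P^D_c$ realizing the same $\PP$, and conversely.

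For the realization transfer I would exploit the combinatorial correspondence from Section \ref{Dynamic and parameter rays}: since $s_1, s_2$ are periodic with bounded entries, for every $D > 2|s_1| + 2$ (and likewise for $s_2$) the addresses embed into $\SS_D$ via the identity, and the shift dynamics, the vertical/cyclic order at infinity, and hence the whole combinatorial structure of the portrait $\PP$ are preserved under this embedding. The necessary-and-sufficient conditions for a combinatorial orbit portrait to be realized for polynomials are purely combinatorial (divisor/rotation-number conditions on the angles, as in \cite{Mi}), and these conditions depend only on the shift-orbit and the cyclic order of the addresses — data that the embedding $\SS_D \hookrightarrow \SS$ leaves unchanged. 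Therefore $\PP$ satisfies the polynomial realization conditions if and only if it satisfies the exponential ones, which gives the equivalence of realizability in both families and hence the equivalence of being characteristic.

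The main obstacle I anticipate is the direction going from polynomials to exponentials: the combinatorial realization conditions are classically formulated and verified for polynomials, whereas for exponentials one must know that the same combinatorial conditions are exactly the conditions for exponential realization. This is where I would lean most heavily on \cite{RS2} (Proposition 5.4 and the bifurcation structure) together with Theorem \ref{polyexp Misiurewicz}, passing from the characteristic \emph{periodic} ray pair to the associated \emph{preperiodic} Misiurewicz data by pulling back, so that the already-proven Misiurewicz correspondence does the transfer and the periodic portrait is recovered as the portrait of the orbit on whose boundary wake the Misiurewicz parameter sits. Verifying that this passage between periodic (parabolic) and preperiodic (Misiurewicz) combinatorics is faithful in both families — i.e.\ that no portrait data is lost or created by the degree-$D$ truncation — is the delicate point, but it follows once one checks the entries of all relevant addresses stay uniformly bounded so that a single choice of $D$ works simultaneously for $s_1, \dots, s_q$.
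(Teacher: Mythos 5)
Your main argument contains a circularity at its decisive step. You assert that the necessary-and-sufficient combinatorial conditions for realizing an orbit portrait (the rotation-number/cyclic-order conditions of \cite{Mi}) govern realizability for exponentials as well, and you deduce the equivalence of realizability from the fact that the embedding $\SS_D\hookrightarrow\SS$ preserves the shift and the vertical/cyclic order. But no realization criterion for exponential orbit portraits has been established independently anywhere in the paper; indeed the paper explicitly presents such a criterion as a \emph{consequence} of this theorem (``As there are necessary and sufficient conditions for a combinatorial orbit portrait to be realized for polynomials, this give unique and sufficient conditions for a combinatorial portrait to be realized for exponentials''), not as an input to it. That the embedding preserves all the symbolic data of $\PP$ is true but beside the point: the question is which symbolic data are realized by actual landing relations of exponential dynamic rays, and that is precisely what must be proved.

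The missing mechanism is the one the paper actually uses. First, since a realized portrait persists throughout its wake (Proposition \ref{Correspondence of bifurcations} on the exponential side, classically on the polynomial side), one may assume $\PP$ is realized at a \emph{Misiurewicz} parameter inside the wake. Then Lemma \ref{Significance of dynamical partition for Misiurewicz parameters} does the transfer: at a Misiurewicz parameter, two (pre)periodic dynamic rays land together if and only if they have the same itinerary with respect to the partition induced by a ray of preperiodic address $s$ landing at the singular value. This itinerary is computed purely symbolically from the addresses and from $s$, so it is literally identical in $\SS_D$ and in $\SS$; Theorem \ref{Classification of Misiurewicz exponential maps} supplies the exponential Misiurewicz parameter $\tilde c$ associated to the same address $s$, and applying the lemma in $\Pi_{\tilde c}$ forces the rays of $\PP$ to land together there, realizing $\PP$ for an exponential (the converse direction runs symmetrically, choosing $D$ large enough for the finitely many, uniformly bounded addresses involved). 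Your closing paragraph gestures toward this route (``passing \dots to the associated preperiodic Misiurewicz data''), but without the itinerary lemma you have stated no mechanism by which the Misiurewicz correspondence moves the landing pattern of the \emph{periodic} rays of $\PP$ across families: Theorem \ref{polyexp Misiurewicz} by itself only concerns the preperiodic rays landing at the Misiurewicz parameter, not the periodic rays of the portrait, so your sketch stops exactly where the real work begins.
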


\begin{rem} By the definitions and Proposition \ref{Correspondence of bifurcations} this will  show that if the parameter rays with periodic address $G_{s_1}, G_{s_2}$ land together in the parameter space  $\Pi^D_P$ for some $D$ then  the parameter rays with the corresponding addresses land together in the parameter  space of exponential maps $\Pi_P$; on the other side, if the parameter rays  $G_{s_1}, G_{s_2}$ land together in $\Pi_P$, they land together in $\Pi^D_P$ for all sufficiently high $D$.
\end{rem}

\begin{proof}
Let $\PP=\{\AA_i\}$ be a combinatorial orbit portrait. The inverse of the shift  map brings each non-characteristic sector to a sector bounded by rays whose addresses have the same first entry, and the characteristic sector to a sector bounded by rays for whose addresses the first entry differs by one.

So, being a characteristic sector is encoded in the topological orbit portrait, and the claim will follow if we can show that every combinatorial portrait is realized in the exponential family if and only if it is realized for $P_D$ of sufficiently high degree $D$.

If $\PP$ is realized for some polynomial in $\{P_c^D\}$, it persists in the whole wake bounded by its characteristic addresses so in particular it is  realized for some polynomial Misiurewicz parameter $c$ as well.

 This Misiurewicz parameter is the landing point of a dynamic ray of angle $s$, inducing a dynamical partition as described in the section \ref{Misiurewicz parameters}. The rays whose angles belong to the same $\AA_i$ land together, so they have the same itinerary with respect to this partition by \ref{Significance of dynamical partition for Misiurewicz parameters}; in particular, their angles have the same itineraries under the shift map with respect to the partition induced by $s$. 
  
By the combinatorial correspondence between polynomials and exponentials the angle $s$ in $D$-adic expansion can be seen as an address $s$ which  identifies a Misiurewicz parameter $\tilde{c}$ in the exponential family by \ref{Classification of Misiurewicz exponential maps}.  
All the dynamic  rays whose  addresses belong to $\PP$ exist in the dynamical plane of $\tilde{c}$, and  by Lemma \ref{Significance of dynamical partition for Misiurewicz parameters} they land together as they have the same itinerary with respect to the partition induced by the ray landing at the Misiurewicz parameter. 

If $\PP$ is realized for an exponential parameter, it persists in a wake by Theorem \ref{Correspondence of bifurcations},  so it is realized for some Misiurewicz parameter and can be transfered to a polynomial Misiurewicz parameter whose degree is sufficiently high to ensure the existence of the dynamic rays whose addresses belong to $\PP$ and of the dynamic ray landing at the Misiurewicz parameter. 
\end{proof}
 %%%%%%%%%%%%%%%%%%%%%%%%%%%%%%%%%%%%%%%%%%%%%%%%%%%%%%%%%%%%%%%%%%%%%%%%%%%%%%%%%%%%%%%%%%%%%%%%%%%%%%%%%%%%%%%%%%%%%%%%%%%%%%%%%%%%%%%%%%%%%%%%%%%%%%%%%%%%%%%%%%%%%%%%%%%%%%%%%%%%%%%%%%%%%%%%%%%%%%%%%%%%%%%%%%%%%%%%%%%%%%%%%%%%%%%%%%%%%%%%
\section{Fibers and Rigidity}\label{Fibers and Rigidity}

One of the main problems for one-parameter families in one-dimensional complex dynamics is to show that hyperbolic maps are dense. Whenever  the set of structurally stable parameters $S$ is dense, like for example for exponential maps and unicritical polynomials, saying that hyperbolic maps are dense is equivalent to saying that $S$ consists only of hyperbolic components.

 If there was a non-hyperbolic component, all maps in a neighborhood of a parameter in this component would be conjugate, so that any two maps in the component would have exactly the same set of combinatorial orbit portraits.
  By the theory of parabolic bifurcation in \cite{RS2}, this means that two parameters in the same non-hyperbolic component could not be separated by a parameter ray pair, or otherwise one of the two would have an additional orbit portrait. 
  This leads to the following definitions:  

\begin{defn}
The \emph{ parameter fiber} of a parameter $c_0 $ is the set of parameters which cannot be separated from $c_0$ by some pair of  parameter rays with (pre)periodic addresses landing together at a parabolic or Misiurewicz parameter, or by two parameter rays with periodic addresses landing at the boundary of the same hyperbolic component.

By analogy, the \emph{dynamical fiber} of a point $c_0 $ is the set of points which cannot be separated from $c_0$ by some pair of (pre)periodic rays landing together at some (pre)periodic point.
\end{defn}

\begin{defn}
We will say that the fiber of a point $c_0$ in dynamical/parameter space is \emph{trivial}, if any point $c\neq c_0$ can be separated from $c_0$ via a pair of (pre)periodic dynamic/parameter rays landing together (a ray pair), except for the points belonging to the dynamic/parameter rays which might land at the point $c_0$ itself.
\end{defn}
Note that the definition of fiber in dynamical and in parameter space are analogous by replacing dynamic rays with parameter rays.

We will call any result about triviality of fibers a \emph{rigidity result}.
 This comes from the fact that any map whose singular value does not escape and with trivial fiber cannot be conjugate to any other map in a neighborhood because two maps with different orbit portraits can not be topologically conjugate.
  
The next result follows immediately from the previous definitions. (See again \cite{RS3} for a slightly different formulation of this discussion.)

\begin{thm}
If the fiber of every non-hyperbolic, non-escaping parameter  is trivial, then every component in the set of structurally stable parameters is hyperbolic.
\end{thm}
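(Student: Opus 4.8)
The plan is to argue by contradiction: assume $S$ has a non-hyperbolic component $U$, and produce two parameters in $U$ that the hypothesis forces to be separated by a ray pair, while structural stability of $U$ forbids exactly this. The only structural fact I need about $U$ is that, being a connected component of the open set $S$, it is itself open, hence a genuine two-dimensional open subset of $\C$; this observation gets used twice to dodge one-dimensional exceptional sets.

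First I would record the combinatorial rigidity inside $U$. All parameters in $U$ are topologically conjugate (structural stability together with connectedness of $U$), so they carry exactly the same combinatorial orbit portraits. By the theory of parabolic bifurcations in \cite{RS2} — the very principle quoted in the discussion preceding the statement — two parameters sharing all their orbit portraits cannot be separated by any pair of parameter rays landing together, whether at a parabolic or Misiurewicz parameter or at the boundary of a hyperbolic component, since crossing such a ray pair would endow one of the two with an extra orbit portrait. Hence no two points of $U$ can be separated by a ray pair.

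Next I would choose a base parameter to which the hypothesis applies. The escaping parameters form a union of the one-dimensional parameter rays $G_s$, hence a nowhere dense subset of $\C$; since $U$ is open I may pick $c_0 \in U$ non-escaping, and $c_0$ is automatically non-hyperbolic because $U$ is a non-hyperbolic component. The hypothesis then yields that the fiber of $c_0$ is trivial. The parameter rays landing at $c_0$ again form a union of curves, so openness of $U$ lets me choose $c \in U$ with $c \neq c_0$ lying on none of them. Triviality of the fiber of $c_0$ now forces a ray pair that separates $c$ from $c_0$, contradicting the rigidity of the previous paragraph; therefore no non-hyperbolic component exists, and every component of $S$ is hyperbolic.

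The genuinely substantive input — and the step I expect to be the main obstacle — is the combinatorial rigidity, namely the assertion that equality of all orbit portraits prevents separation by a ray pair. This is precisely where \cite{RS2} is invoked and where the clause \emph{``two parameter rays with periodic addresses landing at the boundary of the same hyperbolic component''} in the definition of fiber must be reconciled with the orbit-portrait picture. The remaining ingredients — openness of the components of $S$, and the nowhere-density of both the escaping parameters and the rays landing at $c_0$ — are soft; the only care required is to ensure the chosen $c_0$ is \emph{simultaneously} non-escaping and non-hyperbolic, so that the hypothesis on fibers is legitimately applicable.
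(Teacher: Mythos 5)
Your proposal is correct and follows essentially the same route as the paper, which states that the theorem ``follows immediately from the previous definitions'' --- meaning exactly your argument: parameters in a non-hyperbolic component are all topologically conjugate, hence share all combinatorial orbit portraits, hence by the parabolic bifurcation theory of \cite{RS2} cannot be separated by any parameter ray pair, contradicting triviality of the fiber of a non-escaping, non-hyperbolic parameter in the component. Your additional care --- using openness of the component to pick $c_0$ non-escaping and $c$ off the rays landing at $c_0$ --- simply makes explicit the soft points the paper leaves implicit.
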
 

There are two main points in considering fibers to study density of hyperbolicity: for the exponential case, parameter rays with periodic address for exponentials are closely related to parameter rays for unicritical polynomials (see Theorem \ref{Correspondence of characteristic rays}), so that it is possible to infer results about exponentials using known results about polynomials; the second one, and more general one, is that fibers are a way to "localize" the global conjecture, and select  specific classes of parameters which are easier to study.

Our combinatorial rigidity statement (Theorem \ref{Triviality of fibers}) deals with the easiest class of parameters, the Misiurewicz parameters described in Section \ref{Misiurewicz parameters}. We restate it here for convenience:

\begin{thm}\label{Triviality of Misiurewicz}
Fibers of Misiurewicz parameters in parameter space are trivial, i.e. given any postsingularly finite parameter $c_0$, for any other parameter $c$ which does not belong to one of the finitely many parameter rays landing at $c_0$ there is a pair of parameter rays with periodic addresses landing together at a parabolic parameter which separate $c$ from $c_0$.
\end{thm}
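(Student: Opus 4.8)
The plan is to reduce triviality of the parameter fiber to a contraction statement inside the single dynamical plane $\Pi_{c_0}$ and to transport the outcome to parameter space through the bifurcation correspondence. By the corollary to \propref{Correspondence of bifurcations}, a characteristic parameter ray pair $(G_{s_1},G_{s_2})$ with periodic addresses lands together at a parabolic parameter and bounds a wake whose defining property is purely dynamical: a parameter $c$ lies in that wake exactly when $g_{s_1},g_{s_2}$ land together and are characteristic for an orbit portrait in $\Pi_c$. Hence, to separate a given $c\neq c_0$ from $c_0$, it suffices to exhibit one characteristic ray pair that is realized as a characteristic portrait in $\Pi_{c_0}$ (so that $c_0$ sits in its wake) but fails to be so in $\Pi_c$. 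The theorem will therefore follow once I produce a cofinal nested family of characteristic portraits in $\Pi_{c_0}$ whose associated parabolic wakes shrink onto $c_0$.

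To build this family I would work near the postsingular cycle. Since $c_0$ is Misiurewicz, its postsingular orbit $\{z_i\}$ is a repelling cycle of some period $m$ with $f_{c_0}^{k}(c_0)=z_{i_0}$, and by \thmref{Landing theorem for periodic dynamic rays} periodic dynamic rays land on it. Because $J(f_{c_0})=\C$, periodic points accumulate at the singular value; I would select periodic orbits whose relevant points cluster toward $z_{i_0}$, together with periodic rays landing at them, whose characteristic rays (relative to the singular value) bound characteristic sectors $V_n$ all containing $c_0$. Realizability of the corresponding combinatorial portraits—and hence of the associated characteristic \emph{parameter} ray pairs landing at parabolic parameters—is guaranteed by \thmref{Correspondence of characteristic rays}, which matches each portrait with one realized for a unicritical polynomial of high degree, where triviality of Misiurewicz fibers is already known; this also fixes the address-nesting of the wakes and identifies their parabolic tips.

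The heart of the argument, and the step I expect to be the main obstacle, is showing that the sectors $V_n$ actually contract to the single point $c_0$, so that the associated parabolic wakes close down on $c_0$ and its finitely many landing rays. This is exactly where contraction under the inverse map near the repelling postsingular orbit enters, and I would run it as in the proof of \lemref{Significance of dynamical partition for Misiurewicz parameters}: realize the $V_n$ as deep pullbacks under the inverse branches tied to $\{z_i\}$, endow the pieces $\Wij$ of the dynamical partition (which avoid the postsingular set) with their hyperbolic metrics, and use the strict Schwarz--Pick contraction to force the hyperbolic, hence Euclidean, diameters to zero. Given $c\neq c_0$ off the finitely many rays landing at $c_0$, it then fails to lie in all but finitely many of the shrinking wakes, and the bounding periodic ray pair of the first wake excluding $c$ separates $c$ from $c_0$, which is the required conclusion.

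Two technical points must be monitored. First, the characteristic rays of the selected portraits must land at periodic points genuinely distinct from $c_0$ (which is preperiodic, not periodic), so that the transported pairs are authentic periodic parameter ray pairs landing at parabolic parameters rather than degenerating onto $c_0$ itself; the injectivity of the contracting inverse branches keeps the landing points apart from $c_0$. Second, the periodic addresses along the nest stay uniformly bounded, since the relevant portraits are supported on finitely many strips near $z_{i_0}$, so a single polynomial degree $D$ suffices throughout and the correspondence of \thmref{Correspondence of characteristic rays} applies uniformly at every level.
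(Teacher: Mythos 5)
There is a genuine gap at the step you yourself flag as the heart of the argument. Your Schwarz--Pick contraction near the repelling postsingular cycle shows that the characteristic \emph{dynamical} sectors $V_n$ shrink to $c_0$ inside the single dynamical plane $\Pi_{c_0}$, and you then assert that ``the associated parabolic wakes close down on $c_0$.'' Nothing in your argument transports the shrinking from $\Pi_{c_0}$ to parameter space: the wake bounded by a characteristic parameter ray pair is a subset of $\Pi_P$, and there is no map from $\Pi_{c_0}$ to $\Pi_P$ under which the Euclidean diameters of the $V_n$ control the diameters of the wakes. What your construction actually yields is that the \emph{addresses} of the wakes converge to $s_1,\dots,s_q$, i.e.\ combinatorial approximation. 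That suffices to separate $c_0$ from any parameter admitting a combinatorial description (escaping, parabolic, Misiurewicz), but for a general parameter $c$ in the bifurcation locus --- exactly the case the theorem is about --- one must show that the intersection of the nested parameter wakes is no larger than $c_0$ together with its finitely many rays, and this does not follow from contraction in $\Pi_{c_0}$. The paper flags precisely this danger right after \propref{Combinatorial approximation of parameter rays}: the relation between the combinatorial topology and the topology on $\C$ is ``far from clear.''

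The paper closes this hole with two ingredients your plan lacks. First, \propref{Persistence of dynamical triviality}: the postsingular cycle is continued analytically to $\{z_i(c)\}$ for $c$ near $c_0$, and the contraction argument is rerun \emph{in the dynamical plane $\Pi_c$ of the parameter one wants to separate}, giving triviality of the dynamical fiber of $\tilde z(c)$ there. Second, in the proof of Theorem \ref{Triviality of Misiurewicz fibers}, for $c$ in an internal sector one separates $c$ from $\tilde z(c)$ in $\Pi_c$ by a periodic ray pair $(\alpha,\alpha')$, notes that this configuration persists over a parameter neighborhood $U$ of $c$, and then uses density of escaping parameters in the bifurcation locus together with the vertical order of rays: every escaping parameter in $U$ lies on a parameter ray of address between $\alpha$ and $\alpha'$, hence is cut off from $c_0$ by a fixed combinatorially closer pair $(\beta,\beta')$, and a limiting argument separates $c$ itself. (Only the external sector is handled in $\Pi_{c_0}$, which is the one place where your single-plane contraction genuinely does the job; it proves the paper's \propref{Triviality in dynamical plane}, i.e.\ triviality of the \emph{dynamical} fiber of $c_0$.) A secondary, fixable point: you generate the approximating portraits by selecting periodic orbits clustering at the postsingular cycle, whereas the paper imports the approximating addresses from the polynomial statement (Lemma \ref{Combinatorial approximation polys}) through the combinatorial correspondence; the paper's route also delivers the uniform bound on entries that you need in order to work with a single polynomial degree $D$, which in your version is asserted rather than proved.
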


%%%%%%%%%%%%%%%%%%%%%%%%%%%%%%%%%%%%%%%%%%%%%%%%%%%%%%%%%%%%%%%%%%%%%%%%%%%%%%%%%%%%%%%%%%%%%%%%%%%%%%%%%%%%%%%%%%%%%%%%%%%%%%%%%%%%%%%%%%%%
%%%%%%%%%%%%%%%%%%%%%%%%%%%%%%%%%%%%%%%%%%%%%%%%%%%%%%%%%%%%%%%%%%%%%%%%%%%%%%%%%%%%%%%%%%%%%%%%%%%%%%%%%%%%%%%%%%%%%%%%%%%%%%%%%%%%%%%%%%%%%
\section{Triviality of Misiurewicz fibers}
In this section we prove Theorem \ref{Triviality of Misiurewicz}; for the remainder of the section, let $c_0$ be a Misiurewicz parameter. 
The proof follows the  outline of the corresponding result for polynomials (Lemma 7.1 and Theorem 7.3 in  \cite{S1}), using Theorem \ref{polyexp Misiurewicz} to establish a bridge between the combinatorics for polynomials and the combinatorics for exponentials. 
The following is Lemma 7.1 in \cite{S1}.

\begin{lem}\label{Combinatorial approximation polys}
Let $c_0$ be a Misiurewicz parameter for a family of unicritical polynomials, and let $G_{s_1},...,G_{s_q}$ be the parameter rays landing at $c_0$. Then  for all $ \epsilon>0$ there exist parameter ray pairs $P_i$ of angles  $(\alpha_i,\ \alpha_i')$ such that
$s_i<\alpha_i<\alpha_i'<s_{i+1}$ for $i=1,...,q-1$ and dist$(\alpha_i,s_i )<\epsilon$ ,  $\dist(\alpha_i',s_{i+1})<\epsilon$; moreover there is a  parameter ray pair $P_0$ of angles ($\alpha_0, \alpha_0')$ such that $\alpha_0< s_1< s_q<\alpha_0'$ and $\dist(\alpha_0,s_1 )<\epsilon$ ,  $\dist(\alpha_0',s_q)<\epsilon$.
\end{lem}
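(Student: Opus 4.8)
The plan is to prove the statement first in the dynamical plane of $P^D_{c_0}$ and then to transport it to the parameter plane. By \thmref{Dyn par correspondence polys} the rays $g_{s_1},\dots,g_{s_q}$ land at the critical value $c_0$ in $\Pi^D_{c_0}$ with the same preperiodic angles $s_1<\dots<s_q$, cutting a punctured neighbourhood of $c_0$ into $q$ sectors (the characteristic sector together with the non-characteristic ones, in the terminology of \secref{Combinatorics and Ray Portraits}). It therefore suffices to produce, for each $\eps>0$, periodic dynamic ray pairs that are characteristic for some orbit portrait, whose two rays lie in the prescribed sector and whose angles are $\eps$-close to the two bounding angles $s_i,s_{i+1}$ (respectively $s_1,s_q$ from the outside for $P_0$). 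Indeed, once such characteristic periodic dynamic ray pairs are found, \propref{Correspondence of bifurcations} and its corollary convert each of them into a parameter ray pair with the same periodic angles; since the vertical/cyclic order of the rays matches the order of their angles, the inequalities $s_i<\alpha_i<\alpha_i'<s_{i+1}$ and $\alpha_0<s_1<s_q<\alpha_0'$ are automatic, and the distance estimates pass over verbatim.

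The core is thus the dynamical construction. Let $k$ be the preperiod, so that $w:=P^k_{c_0}(c_0)$ is a repelling periodic point, say of period $p$; under $P^k_{c_0}$ the rays $g_{s_i}$ are carried to the periodic rays $g_{\sigma^k s_i}$ landing at $w$, which form part of the orbit portrait of $w$. Linearising the first return map $P^p_{c_0}$ near $w$, whose multiplier has modulus $>1$, I would examine the two ray germs bounding a given sector at $w$ and look for periodic ray pairs inside that sector arbitrarily close to its boundary. The mechanism is the self-similar structure of wakes: a characteristic periodic ray pair bounds a wake, and inside any wake there are sub-wakes bounded by characteristic periodic ray pairs of higher period; pulling these back under iterates of $P^p_{c_0}$ and using the expansion at $w$, one obtains characteristic periodic ray pairs whose rays converge in the address metric to either boundary ray of the sector. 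Applying the inverse branch of $P^k_{c_0}$ that maps the sectors at $w$ back to the sectors at $c_0$ --- well defined because these sectors meet the postcritical set only at their tips --- turns these into characteristic periodic ray pairs accumulating at the $g_{s_i}$ from the required side; \lemref{Significance of dynamical partition for Misiurewicz parameters} guarantees that the paired rays still land together, since pulling back preserves their common itinerary. The outer pair $P_0$ is obtained the same way from the sector lying outside all of $g_{s_1},\dots,g_{s_q}$.

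Finally, transporting each such characteristic periodic dynamic ray pair to the parameter plane by \propref{Correspondence of bifurcations} yields the desired parameter ray pairs $P_i$; their angles are periodic, so by the bifurcation theory of \cite{RS2} they land together at the parabolic root of a hyperbolic component, and the order and closeness of angles established in the dynamical plane are inherited.

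The step I expect to be the main obstacle is the existence, inside a prescribed sector at $w$ and arbitrarily close to a chosen boundary ray, of a genuine periodic ray \emph{pair} --- two periodic rays landing at a common point --- rather than merely a nearby isolated periodic ray or periodic point. Controlling simultaneously the landing-together (equivalently, equality of itineraries via \lemref{Significance of dynamical partition for Misiurewicz parameters}) and the $\eps$-approximation requires the full self-similar wake structure near the repelling orbit and careful bookkeeping of how the shift acts on the relevant addresses; this is where the Misiurewicz hypothesis, ensuring that $w$ is repelling and the local dynamics uniformly expanding, is essential.
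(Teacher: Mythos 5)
You should first note what the paper actually does with this statement: it gives no proof at all, quoting it verbatim as Lemma 7.1 of \cite{S1}, so your attempt has to be measured against Schleicher's argument rather than anything in the text. Your overall skeleton --- build the ray pairs in the dynamical plane of $P^D_{c_0}$ near the sector boundaries and transfer them by \propref{Correspondence of bifurcations} --- is the right frame, but the core mechanism you propose cannot produce the objects the lemma asks for. Pulling a ray pair back under iterates of the local inverse of $P^p_{c_0}$ at $w$, or under a branch of $P^{-k}_{c_0}$ along the critical orbit, acts on angles by prepending a fixed word, and an angle with a prepended word is \emph{strictly preperiodic}: no finite pull-back of a periodic pair is ever periodic. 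The resulting pairs land at preimages of the original landing point, so they carry no (periodic) orbit portrait, are never characteristic in the sense of the paper, and \propref{Correspondence of bifurcations} does not apply to them; they also do not land at the critical value, so the Misiurewicz correspondence is unavailable as a substitute transfer. Since the downstream use of the lemma (Proposition~\ref{Combinatorial approximation of parameter rays} and Theorem~\ref{Triviality of Misiurewicz}) needs \emph{periodic} addresses landing together at parabolic parameters, the step you yourself flagged as ``the main obstacle'' is not a technical wrinkle but the point where the construction fails; invoking the self-similar sub-wake structure inside wakes begs the question, since the accumulation of wake boundaries at the rays $G_{s_i}$ is essentially the statement being proved.

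There is a second, independent defect of shape: both angles of an $n$-fold pull-back converge to the unique fixed sequence $\overline{w_1\ldots w_r}$ of the prepending map, i.e.\ to \emph{one} boundary angle of the sector, whereas the lemma requires a single pair spanning the sector, with $\alpha_i$ within $\epsilon$ of $s_i$ and simultaneously $\alpha_i'$ within $\epsilon$ of $s_{i+1}$. The missing content, supplied in \cite{S1}, is a direct combinatorial construction: writing $s_i=u_i\overline{v_i}$ with $|u_i|=k$, one closes up the expansions to form genuinely periodic angles such as $\overline{u_i v_i^N}$ and $\overline{u_{i+1} v_{i+1}^N}$ (these satisfy $\dist(\overline{u_i v_i^N},s_i)\leq 2^{-(k+N\cdot mq)}$), then verifies --- with careful bookkeeping at the seam where the periodic block restarts, using that the orbit $\{\sigma^j s_i\}$ is finite and stays at definite combinatorial distance from the partition boundaries --- that the two angles have equal itineraries, hence land together by \lemref{Significance of dynamical partition for Misiurewicz parameters}, and that the pair is characteristic via the first-entry criterion appearing in the proof of \thmref{Correspondence of characteristic rays}; only then does the transfer to parameter plane yield $P_i$, and the outer pair $P_0$ is obtained the same way from the outside of the sector bounded by $s_1$ and $s_q$. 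None of this appears in your proposal, so the proof has a genuine gap precisely at its center.
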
 

We will use Lemma \ref{Combinatorial approximation polys} to prove the following exponential version.

\begin{prop}\label{Combinatorial approximation of parameter rays}{\bf Combinatorial approximation of parameter rays.}
Let $c_0$ be a Misiurewicz parameter for the exponential family, and let $G_{s_1},...,G_{s_q}$ be the parameter rays landing at $c_0$. Then  for all $ \epsilon>0$ there exist parameter ray pairs $P_i$ of angles  $(\alpha_i,\ \alpha_i')$ such that
$s_i<\alpha_i<\alpha_i'<s_{i+1}$ for $i=1,...,q-1$ and dist$(\alpha_i,s_i )<\epsilon$ ,  $\dist(\alpha_i',s_{i+1})<\epsilon$; moreover there is a  parameter ray pair $P_0$ of angles ($\alpha_0, \alpha_0')$ such that $\alpha_0< s_1< s_q<\alpha_0'$ and $\dist(\alpha_0,s_1 )<\epsilon$,  $\dist(\alpha_0',s_q)<\epsilon$.
\end{prop}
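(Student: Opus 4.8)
The plan is to deduce the exponential statement directly from its polynomial counterpart \lemref{Combinatorial approximation polys} through the combinatorial bridge built in the previous section. Since the addresses $s_1,\dots,s_q$ landing at $c_0$ are preperiodic, their entries are bounded, say $|s_i|<N$ for all $i$; I would fix a degree $D>2N+2$, so that under the combinatorial correspondence each $s_i$ is the $D$-adic expansion of the angle of a parameter ray for $P_D$. By \thmref{polyexp Misiurewicz} the polynomial parameter rays carrying these angles land together at a polynomial Misiurewicz parameter $c_0^{D}$ in $\Pi^D_P$, so the hypotheses of \lemref{Combinatorial approximation polys} are met at $c_0^{D}$.

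Next I would apply \lemref{Combinatorial approximation polys} to $c_0^{D}$ with the same $\epsilon$. This produces periodic parameter ray pairs $P_i=(\alpha_i,\alpha_i')$, $i=0,\dots,q-1$, satisfying the stated order and distance inequalities in $\SS_D$. Being periodic, the sequences $\alpha_i,\alpha_i'$ have bounded entries, hence are exponentially bounded and define genuine addresses in $\SS$. Each $P_i$ is, by definition, a pair of periodic parameter rays landing together in $\Pi^D_P$, i.e. a characteristic ray pair; by \thmref{Correspondence of characteristic rays} together with \propref{Correspondence of bifurcations} (see in particular the remark following \thmref{Correspondence of characteristic rays}), a pair characteristic for $P_D$ is characteristic for the exponential family, so the exponential parameter rays $G_{\alpha_i},G_{\alpha_i'}$ land together in $\Pi_P$ and form the required exponential parameter ray pairs, landing at parabolic parameters by \cite{S3}.

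It then remains to check that the order and distance conditions survive the transfer. The distance $\dist$ depends only on the positions at which two sequences disagree, so the bounds $\dist(\alpha_i,s_i)<\epsilon$ and $\dist(\alpha_i',s_{i+1})<\epsilon$ are literally the same statements in $\SS_D$ and in $\SS$. For the ordering, the inequality $\dist(\alpha_i,s_i)<\epsilon$ forces $\alpha_i$ and $s_i$ to share a long common prefix, so the whole finite configuration $\{s_j,\alpha_j,\alpha_j'\}$ clusters on a short combinatorial arc around the $s_j$; since the $s_j$ have first entries bounded by $N$, this arc stays away from the extreme symbols at which the cyclic order of $\SS_D$ and the linear order of $\SS$ part company. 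For $D$ large the cyclic order restricted to this configuration therefore coincides with the linear order of $\SS$, and the inequalities $s_i<\alpha_i<\alpha_i'<s_{i+1}$ as well as $\alpha_0<s_1<s_q<\alpha_0'$ pass unchanged to $\SS$.

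The main obstacle is exactly this order-transfer: reconciling the cyclic order governing the polynomial picture with the genuine linear order on $\SS$, and most delicately for the enclosing pair $P_0$, whose two rays lie on opposite ends of the fan and in the polynomial plane wrap around behind $c_0^{D}$. The work lies in making quantitative that a configuration of bounded-entry addresses keeps clear of the wrap-around symbol once $D$ is chosen large enough, so that every order relation produced by \lemref{Combinatorial approximation polys} is faithfully linear in $\SS$.
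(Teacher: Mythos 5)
Your overall route --- transferring the approximating pairs directly between the two parameter planes via \thmref{Correspondence of characteristic rays} and the remark following it, instead of passing through the dynamical planes --- is legitimate for this particular statement; in fact the paper's own remark after its proof of \propref{Combinatorial approximation of parameter rays} concedes that the combinatorial approximation could have been obtained directly in parameter plane this way, and goes through the dynamical planes only because the dynamical version of the approximation is needed afterwards, in \propref{Triviality in dynamical plane} and \propref{Persistence of dynamical triviality}. (Your shortcut also quietly avoids a real subtlety the paper must face: the internal-sector pairs $P_i$, $i\geq 1$, are \emph{not} realized as ray pairs in $\Pi_{c_0}$ itself, which is why the paper needs an auxiliary Misiurewicz parameter inside each sector.) Also, the issue you flag as the ``main obstacle'' --- cyclic versus linear order --- is in fact the harmless part: the embedding $\SS_D\hookra\SS$ is the identity on sequences over the symmetric alphabet and preserves the lexicographic order, and $\dist$ is computed by the same formula in both spaces, so for small $\epsilon$ the inequalities pass over with essentially no work.

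The genuine gap is at your first step. From \thmref{polyexp Misiurewicz} you only get that $G^D_{s_1},\dots,G^D_{s_q}$ \emph{land together} at $c_0^D$; the hypothesis of \lemref{Combinatorial approximation polys} is that these are \emph{all} the parameter rays landing at $c_0^D$. This is not cosmetic: if an extra ray $G^D_t$ with $s_i<t<s_{i+1}$ also landed at $c_0^D$, then no parameter ray pair $(\alpha_i,\alpha_i')$ with $s_i<\alpha_i<\alpha_i'<s_{i+1}$ and both distances small could exist at all, since such a pair would have to straddle $t$ and would then separate the ray $G^D_t$ from its landing point $c_0^D$ --- impossible. So the lemma's conclusion, applied to the true (possibly larger) configuration of rays at $c_0^D$, would only approximate the subdivided sectors and would not yield the pairs your statement requires. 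The paper closes exactly this hole: any further angle landing with $s_1,\dots,s_q$ is a bounded-entry sequence, hence an admissible exponential address with the same itinerary with respect to the partition induced by $s_1$; by \lemref{Significance of dynamical partition for Misiurewicz parameters} and \thmref{Correspondence between dynamical and parameter plane at Misiurewicz points} the corresponding exponential ray would then land at $c_0$ too, contradicting the fact that exactly $G_{s_1},\dots,G_{s_q}$ land there. Note that you cannot simply quote \thmref{polyexp Misiurewicz} in the converse direction to rule out the extra ray, since its polynomial-to-exponential implication is stated for all sufficiently high degrees, whereas you work with one fixed $D$. Adding this itinerary argument repairs your proof; the remainder of your transfer then goes through as written.
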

  
At first sight it might seem that this proposition would solve the  problem of triviality of fibers, but the relation between the "combinatorial topology" and the topology on $\C$ are far from clear, so we still have to show that the ray pairs which approximate the Misiurewicz rays combinatorially actually converge to them with respect to the standard topology on  $\C$ in a neighborhood of $c_0$. We will derive this from the following propositions:   
  
\begin{prop}\label{Triviality in dynamical plane}{\bf Triviality in dynamical plane. }
Let $c_0$ be a Misiurewicz parameter for the exponential family. Then dynamical fibers of the postsingular periodic orbit $\{z_i\}$ are trivial.
\end{prop}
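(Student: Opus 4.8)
The plan is to combine the two ingredients highlighted in the introduction: a combinatorial approximation of the rays landing at the postsingular orbit, and the uniform contraction of the inverse dynamics in a neighbourhood of the repelling cycle $\{z_i\}$. Fix one point $z := z_i$ of the cycle; since the $z_i$ are permuted by $f_{c_0}$, triviality of the fiber at one of them transports to all. By the Landing Theorem \ref{Landing theorem for periodic dynamic rays} the orbit portrait at $z$ consists of finitely many periodic dynamic rays $g_{u_1},\dots,g_{u_r}$ landing at $z$, and these rays cut a punctured neighbourhood of $z$ into finitely many sectors $\Sigma_1,\dots,\Sigma_r$; every $w\neq z$ not lying on one of the $g_{u_\ell}$ lies in a unique $\Sigma_j$. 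It therefore suffices to produce, for each sector, a ray pair not landing at $z$ whose wake engulfs $w$ while leaving $z$ on the opposite side.

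First I would invoke the dynamical-plane analogue of Lemma \ref{Combinatorial approximation polys} --- which holds by the same combinatorial argument, or is transported from the polynomial dynamical plane through Theorem \ref{Correspondence of characteristic rays} --- to obtain, for every $\epsilon>0$, addresses $u_j<\alpha_j<\alpha_j'<u_{j+1}$ with $\dist(\alpha_j,u_j)<\epsilon$ and $\dist(\alpha_j',u_{j+1})<\epsilon$ (together with an analogous pair $\alpha_0<u_1,\ u_r<\alpha_0'$ surrounding the whole portrait). Since $\alpha_j$ and $\alpha_j'$ are (pre)periodic and share their itinerary with respect to the dynamical partition by construction, Lemma \ref{Significance of dynamical partition for Misiurewicz parameters} guarantees that $g_{\alpha_j}$ and $g_{\alpha_j'}$ land together; call their common landing point $p_j$ and the enclosed region the wake $V_j(\epsilon)$.

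The crux --- and the step I expect to be the main obstacle --- is to show that $p_j\to z$ in the Euclidean topology as $\epsilon\to0$, that is, to convert combinatorial proximity of addresses into genuine proximity of landing points; the introduction already flags that ``the relation between combinatorial topology and the topology on $\C$ is far from clear.'' Here I would use the repelling nature of the cycle. If $\alpha_j$ agrees with $u_j$ on its first $K$ entries, then $p_j$ and $z$ have the same itinerary with respect to the dynamical partition for the first $K$ steps, so $p_j$ is obtained from a point of the $K$-th partition piece by the same composition of inverse branches that produces $z$. Because $z$ lies on a repelling cycle, these inverse branches are, after the preperiodic part, iterates of the inverse first-return map near $\{z_i\}$, which by the Schwarz--Pick Lemma and monotonicity of the hyperbolic metric contract uniformly, by a factor comparable to $|\lambda|^{-1}$ where $\lambda$ is the multiplier of the cycle. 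Hence the hyperbolic distance between $p_j$ and $z$ is bounded by $C|\lambda|^{-K/m}\to0$ as $\epsilon\to0$ (forcing $K\to\infty$), giving $p_j\to z$; the same estimate controls the surrounding pair $\alpha_0,\alpha_0'$. One must check that the relevant inverse branches are well defined, which holds because $c_0$ is strictly preperiodic and hence the cycle and a neighbourhood of it avoid the omitted singular value.

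Granting this, the conclusion is routine. As $\epsilon\to0$ the angular extent $(\alpha_j,\alpha_j')$ of $V_j(\epsilon)$ exhausts the sector $\Sigma_j$, while its vertex $p_j$ tends to $z$; consequently, for any fixed $w\in\Sigma_j$ with $w\neq z$ and $w$ not on a $g_{u_\ell}$, one has $w\in V_j(\epsilon)$ for all sufficiently small $\epsilon$, and then the ray pair $\{g_{\alpha_j},g_{\alpha_j'}\}$ separates $w$ from $z$. Since every point other than $z$ and the points of the rays landing at $z$ lies in some $\Sigma_j$, the dynamical fiber of $z$ --- and hence of the whole postsingular periodic orbit $\{z_i\}$ --- is trivial.
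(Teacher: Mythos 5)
There is a genuine gap at exactly the step you flag as the crux. From the fact that $\alpha_j$ agrees with $u_j$ in its first $K$ entries you conclude that $p_j$ and $z$ share their first $K$ itinerary entries, and then claim that the corresponding composition of inverse branches contracts ``uniformly, by a factor comparable to $|\lambda|^{-1}$,'' giving $\dist$-type control $C|\lambda|^{-K/m}$ on the distance from $p_j$ to $z$. But sharing an itinerary with respect to the dynamical partition only says that the forward orbit of $p_j$ visits the same partition pieces $W_{a_n}$ (or $\Wij$) as the orbit of $z$; these pieces are unbounded, and nothing forces the orbit of $p_j$ to stay inside a linearizing neighborhood of the cycle, which is the only place where the multiplier $\lambda$ gives a definite contraction factor. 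On the unbounded pieces, Schwarz--Pick yields strict but \emph{not uniform} contraction of the hyperbolic metrics, so no bound of the form $C|\lambda|^{-K/m}$ follows. This is not a pedantic point: the paper's own Lemma \ref{Significance of dynamical partition for Misiurewicz parameters} has to treat precisely the failure mode (orbits staying in the same $W_i$ but different $\Wij$) by a qualitative argument with the identity principle, because no uniform estimate is available. Indeed, in the paper the Euclidean convergence of the approximating ray pairs is a \emph{consequence} of this proposition and of Proposition \ref{Persistence of dynamical triviality}, not an input to it; your proposal inverts that logical order and thereby assumes the hard part.

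The paper's proof circumvents the estimate entirely. Since $J(f_{c_0})=\C$ and $J=\overline{I(f_{c_0})}$, escaping points are dense, so in each sector at $z$ (cut out by the rays $g_{\sigma^k(s_1)},\dots,g_{\sigma^k(s_q)}$, the $k$th images of the rays at $c_0$) some dynamic ray enters the linearizing neighborhood $L$, and it can be surrounded by one of the combinatorially approximating ray pairs; this produces, per sector, a \emph{single} ray pair entering $L$, with no claim about where its landing point sits. One then takes the branch $\psi$ of $f^{-m}$ fixing $z$: the closed region $V$ bounded by the sector boundary, $\partial L$, and that ray pair contains no postsingular point other than $z$, so all iterates $\psi^n$ are defined on $V$, and the linearization gives $\psi^n(V)\ra\{z\}$. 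The pullbacks $\psi^n$ of the fixed ray pair are again (pre)periodic ray pairs, and for any $w\neq z$ in the sector one eventually has $w\notin\psi^n(V)$, which is the desired separation. Thus uniform contraction is invoked only inside $L$, where it is legitimate, and the density of escaping points replaces the unprovable claim $p_j\ra z$. (Even granting that claim, your final step would still need an argument that the wakes $V_j(\epsilon)$ exhaust the sector in the plane, which is again a combinatorics-to-topology assertion of the same kind.) If you want to repair your write-up, replace the quantitative estimate by the fixed-pair-plus-pullback mechanism above.
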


Given a parameter $\ov{c}$ (for either unicritical polynomials or exponentials), and a repelling periodic orbit $\{z_i(\ov{c})\}$ with period $M$ and combinatorial orbit portrait $\PP$, for any $c$ in a sufficiently small parameter  neighborhood $U$ of $\ov{c}$ there are $M$ analytic functions $z_i(c)$ such that $\{z_i(c)\}$ is a repelling periodic orbit with  period $M$ and orbit portrait $\PP$ in the dynamical plane for $c$. We will call the orbit $\{z_i(c)\}$ the \emph{analytic continuation} of $\{z_i(\ov{c})\}$ (see \cite{M}, Appendix B). 
 
\begin{prop}\label{Persistence of dynamical triviality}{\bf{Persistence of dynamical triviality.}}
Let $c_0$ be as above. The postsingular periodic orbit $\{z_i\}$ has a well defined analytic continuation $\{z_i(c)\}$ for $c$ in a neighborhood of $c_0$, such that the dynamical fibers of $\{z_i(c)\}$ in $\Pi_c$ are trivial.
\end{prop}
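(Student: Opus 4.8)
The plan is to carry the separating ray pairs that witness triviality at $c_0$ over to nearby parameters by a holomorphic-motion argument, the point being that only finitely much data has to be continued. First I would record the analytic continuation asserted in the statement: since $\{z_i\}$ is repelling of period $m$ with orbit portrait $\PP$, the implicit function theorem (see Appendix B of \cite{M}) yields holomorphic functions $z_i(c)$ on a neighborhood $U$ of $c_0$ forming a repelling orbit of the same period, and---after shrinking $U$---with multipliers bounded away from the unit circle and with portrait $\PP$. On such a $U$ the characteristic ray pair of $\PP$ keeps landing together at the continued orbit, its landing point being repelling; this is precisely the stability underlying the holomorphic-motion extension described in the remark following \propref{Correspondence of bifurcations}.

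Next I would isolate the finite generating data. In the proof of \propref{Triviality in dynamical plane} the fiber of each $z_i$ is shown to be trivial through the contraction of the inverse map near the postsingular orbit: one takes finitely many initial $(\text{pre})$periodic ray pairs $P_0$ and pulls them back as $P_n = \Psi^n(P_0)$, where $\Psi$ is the first-return inverse branch of $f_{c_0}$ fixing the orbit and contracting (in the hyperbolic metric of a suitable domain, as in \lemref{Significance of dynamical partition for Misiurewicz parameters}), so that the $P_n$ separate $z_i$ from every other non-escaping point. Thus persistence only requires (a) that the finitely many pairs $P_0$, which land at repelling or strictly preperiodic points, continue to land together for $c\in U$---again the stability above, now for finitely many pairs, so a single small $U$ suffices; and (b) that the inverse branch $\Psi_c$ remain well defined and uniformly contracting on a fixed neighborhood of $\{z_i(c)\}$, which holds because the multipliers stay bounded away from the unit circle.

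Granting (a) and (b), persistence of the whole nest is automatic. The equivariance $f_c(g^c_s(t))=g^c_{\sigma s}(F(t))$ of \thmref{Existence of dynamic rays} shows that inverse branches send rays landing together to rays landing together, so each pullback $P_n(c)=\Psi_c^n(P_0(c))$ is again a genuine ray pair for every $c\in U$, with no further bifurcation to check; and the uniform contraction of $\Psi_c$ forces $P_n(c)$ to separate $z_i(c)$ from points at a rate independent of $c$. Hence for each $c\in U$ the family $\{P_n(c)\}$ separates $z_i(c)$ from every $w\neq z_i(c)$ lying off the rays that land at $z_i(c)$, which is exactly the triviality of the dynamical fiber of $z_i(c)$ in $\Pi_c$.

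The hard part will be the two uniformity claims that make the infinite nest survive the perturbation on a \emph{fixed} $U$. For (a) one must rule out a parabolic bifurcation of the finitely many landing points of $P_0$; this is where being Misiurewicz enters, since at $c_0$ the Julia set is all of $\C$ and there are no non-repelling cycles, so every landing point in play is repelling and its landing portrait is stable on a small neighborhood. For (b) one must verify that $\Psi_c$ maps a fixed neighborhood of the orbit compactly into itself with a contraction factor bounded uniformly below $1$ over $U$, so that the combinatorial self-similarity of the nest---and therefore its cofinal shrinking to $z_i(c)$---is preserved; once this is in place the passage from separation of the orbit to triviality of its fiber is the same bookkeeping as at $c_0$.
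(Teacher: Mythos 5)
Your proposal is correct and follows essentially the same route as the paper's proof: the paper likewise continues analytically the postsingular orbit together with the finitely many landing points of the ray pairs entering the linearizing neighborhood (keeping the same rays landing at them), shrinks the parameter neighborhood so that the rays still enter the new linearizing neighborhood, and then uses contraction under the branch of $f^{-m}$ fixing $\{z_i(c)\}$ to shrink the regions between the approximating ray pairs and the rays landing at $z_i(c)$ down to points. Your discussion of uniform contraction and stability of landing at repelling points simply makes explicit the details the paper leaves implicit.
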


At this point we will be able to prove our final theorem (equivalent to Theorem \ref{Triviality of Misiurewicz})

\begin{thm}\label{Triviality of Misiurewicz fibers}{\bf{Triviality of Misiurewicz fibers.}}
Let $c_0$ be a Misiurewicz parameter for the exponential family. Then any parameter $c$ can be separated from $c_0$ by a parameter ray pair, except for those parameters lying on the rays $G_{s_i}$ landing at $c_0$.
\end{thm}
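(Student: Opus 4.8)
The plan is to realize the parameter fiber of $c_0$ as a nested intersection of regions cut out by parameter ray pairs and to show, using the dynamical results, that this intersection collapses onto $c_0$ together with the rays landing at it. First I would fix a parameter $c\neq c_0$ lying off every ray $G_{s_i}$ and observe that if no parameter ray pair separates $c$ from $c_0$, then $c$ lies on the same side as $c_0$ of each approximating pair furnished by \propref{Combinatorial approximation of parameter rays}. Writing $W_0$ for the wake of $P_0$ and $W_1,\dots,W_{q-1}$ for the wakes of the interstitial pairs $P_i$, one checks from the address inequalities that $c_0\in W_0$ but $c_0\notin W_i$ for $i\geq1$; hence $c$ must lie in $R(\e):=W_0\setminus\bigcup_{i\geq1}W_i$ for every $\e>0$. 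Thus $c\in\bigcap_{\e>0}\ov{R(\e)}$, and the theorem reduces to showing that this intersection meets a neighborhood of $c_0$ only in $\{c_0\}\cup\bigcup_i G_{s_i}$.

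The heart of the matter is to upgrade the purely combinatorial closeness $\dist(\alpha_i,s_i)<\e$ to genuine closeness in $\C$, and this is where the dynamical input enters. By \propref{Triviality in dynamical plane} the dynamical fibers of the postsingular periodic orbit are trivial; since the singular value $c_0$ is a preimage of that orbit and the relevant pullback branch is defined away from the postsingular set, pulling back shows that in $\Pi_{c_0}$ the singular value itself is trapped inside arbitrarily small neighborhoods bounded by periodic dynamic ray pairs. The engine behind this trapping is the hyperbolic contraction of the inverse branches near the repelling orbit already exploited in \lemref{Significance of dynamical partition for Misiurewicz parameters}: two dynamic rays whose addresses agree on a long initial segment get pulled back many times through a linearizing neighborhood, forcing their landing points together. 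In this way the combinatorial approximation of $s_i$ by periodic addresses $\alpha_i$ translates, in the dynamical plane of $c_0$, into dynamic ray pairs $(g_{\alpha_i},g_{\alpha_i'})$ whose common landing points converge to $c_0$.

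It remains to transport this dynamical convergence into parameter space. Here I would invoke \propref{Persistence of dynamical triviality}: triviality of the dynamical fibers of the continued orbit $\{z_i(c)\}$ holds uniformly for $c$ in a neighborhood $U$ of $c_0$, so the orbit portraits realized near $c_0$ vary coherently. Because the dynamic pairs trapping the singular value are the ones cutting it off from the rest of their portraits, they are characteristic, and the corollary to \propref{Correspondence of bifurcations} identifies each with a characteristic parameter ray pair, namely $P_i$, landing at a parabolic parameter $c_i^\ast$; the correspondence at Misiurewicz parameters (\thmref{Correspondence between dynamical and parameter plane at Misiurewicz points}) together with the uniform control on $U$ then pins $c_i^\ast$ near $c_0$, with $c_i^\ast\to c_0$ as $\e\to0$. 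Consequently the roots of the wakes $W_i$ close in on $c_0$ while the rays $G_{\alpha_i}$ converge to $G_{s_i}$ near $c_0$, so $R(\e)$ shrinks in the topology of $\C$ to $\{c_0\}\cup\bigcup_i G_{s_i}$. As $c$ lies off every $G_{s_i}$, this forces $c=c_0$, the desired contradiction.

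The step I expect to be the main obstacle is precisely this last transfer: guaranteeing that the parabolic landing parameters $c_i^\ast$ converge to $c_0$ in $\C$, rather than merely combinatorially. A priori a pair $(G_{\alpha_i},G_{\alpha_i'})$ with addresses close to $s_i$ could land at a parabolic parameter far from $c_0$, and ruling this out is exactly what \propref{Persistence of dynamical triviality} is designed for, by making the dynamical trapping of the singular value persist over the full neighborhood $U$ and thereby confining the wakes $W_i\cap U$ and their roots to shrink onto $c_0$. Making the dependence of $c_i^\ast$ on $\e$ quantitative—tying the parameter-plane modulus of continuity to the dynamical-plane contraction rate near the repelling orbit—is the delicate point the proof must handle with care.
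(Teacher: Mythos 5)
Your setup (nested regions $R(\e)$ cut out by the pairs $P_0,\dots,P_{q-1}$ of \propref{Combinatorial approximation of parameter rays}) and your dynamical-plane analysis in $\Pi_{c_0}$ are consistent with the paper. But the proof has a genuine gap at exactly the step you yourself flag as the \lq\lq main obstacle\rq\rq: the claim that the parabolic parameters $c_i^\ast$ at which the approximating pairs $(G_{\alpha_i},G_{\alpha_i'})$ land converge to $c_0$ in $\C$, so that the wakes $W_i$ shrink onto $c_0$. Nothing you invoke delivers this. \propref{Persistence of dynamical triviality} says only that for each individual $c$ in a neighborhood of $c_0$ the dynamical fibers of the continued orbit $\{z_i(c)\}$ are trivial in $\Pi_c$; it carries no information about the location of parabolic landing points of parameter rays, and there is no uniform or quantitative link between the combinatorial distance $\dist(\alpha_i,s_i)<\e$ and Euclidean distance in parameter space, either in the paper or in your proposal. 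Writing that the proposition \lq\lq is designed for\rq\rq\ this transfer is an assertion, not an argument; a priori a pair with addresses $\e$-close to $s_i$ could still land far from $c_0$, which is precisely the difficulty the paper warns about after \propref{Combinatorial approximation of parameter rays}.

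The paper closes this gap by a mechanism absent from your proposal: it never locates the wake roots at all, and instead separates each parameter $c$ individually. For $c$ in an internal sector, it works in the dynamical plane $\Pi_c$ of the parameter to be separated: there the singular value $c$ lies in the sector at $\tilde{z}(c)$, and by \propref{Persistence of dynamical triviality} it can be cut off from $\tilde{z}(c)$ by a periodic dynamic ray pair $(\alpha,\alpha')$. This configuration persists over a neighborhood $U$ of $c$, so every escaping parameter in $U$ lies on a dynamic ray, hence (by the defining property $c\in G_s \Leftrightarrow c\in g^c_s$) on a parameter ray, of address strictly between $\alpha$ and $\alpha'$; all of these are then separated from $c_0$ simultaneously by a single combinatorially approximating pair $(\beta,\beta')$ whose addresses are closer to $s_1,s_2$. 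Density of escaping points in the bifurcation locus, together with the fact that ray pairs cannot cross non-hyperbolic components, then separates $c$ itself (with the trivial adjustment when $c$ lies on $\beta$ or $\beta'$). The external sector is handled separately in $\Pi_{c_0}$ via \propref{Triviality in dynamical plane} and \propref{Correspondence of bifurcations}. This pointwise transfer through escaping parameters, the only points whose parameter-plane position is directly readable from their dynamical-plane address, is what replaces the convergence $c_i^\ast\to c_0$ that you would need but do not prove; to repair your write-up, adopt this route rather than attempting wake-root convergence, which the paper deliberately avoids.
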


\begin{proof}[Proof of Proposition \ref{Combinatorial approximation of parameter rays}: Combinatorial approximation of parameter rays]$\phantom{.}$

The core of the proof relies on the correspondence between combinatorial spaces for polynomials and for exponential parameters described in the end of Section \ref{Dynamic and parameter rays}; when the angles labeling rays for polynomials of degree $D$ are written in $D$-adic expansion as sequences over $D$ symbols, they can be seen as a subset of the exponentially bounded sequences encoding the combinatorics for exponential maps. 

Consider  the dynamic rays of addresses $s_1, \dots, s_q $ landing at the Misiurewicz parameter $c_0$ in $\Pi_\czero$. As noted in Lemma \ref{Significance of dynamical partition for Misiurewicz parameters}, each $g_{s_i}$ defines a partition with respect to which dynamic rays which are never mapped to $g_{s_i}$ have the same itinerary if and only if they land together in the dynamical plane. 

Also, $c_0$ is the landing point of the parameter rays $G_{s_1},...,G_{s_q}$. As $s_1, \dots, s_q $ include only finitely many symbols because they are finitely many preperiodic addresses, we can fix  a sufficiently high degree $D$ such that the parameter rays of angles $s_1,...,s_q$ all  exist for unicritical polynomials of degree $D$. Choose one of the addresses $s_1,...,s_q$, say $s_1$. As $s_1$ is preperiodic, the polynomial parameter ray  $G^D_{s_1}$ lands at some polynomial Misiurewicz parameter $\ctzero$ in the family $P^D_c$.

For $\ctzero$ the dynamic ray of angle $s_1$ also lands at the singular value in $\Pi^D_\ctzero$. All other angles $s_2,\dots, s_q$ have the same itinerary as $s_1$ with respect to the partition induced by  $s_1$ because they land together in $\Pi_{c_0}$, so by Lemma \ref{Significance of dynamical partition for Misiurewicz parameters} the dynamic rays $g_{s_1},...,g_{s_q}$ all land together at $\ctzero$. No other dynamic ray can land together with them, otherwise its angle would be an admissible sequence for exponentials and would have the same itinerary, so the corresponding exponential  ray would land together with $g_{s_1},\dots, g_{s_q}$ in the exponential dynamical plane as well. 

In the dynamical plane $\Pi_{\ctzero}^D$ by Lemma \ref{Combinatorial approximation polys} we have characteristic dynamic ray pairs approximating  each sector arbitrarily close. For any such ray pair of addresses $(\alpha, \alpha)$, the two rays in the ray pair have the same itinerary  by Lemma \ref{Significance of dynamical partition for Misiurewicz parameters}(with respect to the partition given by preimages of $g^D_{s_1}$), so the two angles $(\alpha, \alpha)$ have the same itinerary with respect to the partition induced in the combinatorial space $\SS_D$ by preimages of $s_1$. Hence the same addresses $(\alpha, \alpha)$ have the same itinerary in $\SS$, so that by  Lemma \ref{Combinatorial approximation polys} again the exponential  rays of addresses $(\alpha, \alpha)$ land together in  $\Pi_\czero$ giving the wanted approximating  ray pairs in the exponential dynamical plane. 

Let us  transfer the approximating dynamic ray pairs to the parameter plane for exponential maps. By Proposition \ref{Correspondence of bifurcations} as the approximating rays are characteristic the parameter rays with the corresponding addresses land together in the exponential parameter plane giving the wanted approximation for the sector defined by $G_{s_1}$ and $G_{s_q}$.
To approximate the other parameter sectors as well, fix a sector, say the sector between $G_{s_1}$ and $G_{s_2}$, call it $\widehat{s_1 s_2}$.

Let $V \subset \Pi_P$ be a neighborhood of $c_0$ such that there is an analytic continuation $\tilde{z}(c)$ of $c_0$ which keeps all the rays landing at $c_0$, and pick a Misiurewicz parameter $c$ in $V \cap \widehat{s_1 s_2}$. In $\Pi_c$ we will have the same relative position between $\tilde{z}$ and $c$ as we have in parameter plane between $c_0$ and $c$, in the sense that $c$ in $\Pi_c$ belongs to the sector defined by the rays of addresses $s_1$ and $s_2$: this follows from the fact that rays respect the vertical order induced by their addresses both in dynamical and in parameter plane.
 
Lemma \ref{Combinatorial approximation polys} gives characteristic dynamic ray pairs approximating $g^c_{s_1}$ and $g^c_{s_2}$ for polynomials (now $g^c_{s_1}$ and $g^c_{s_2}$ are landing at the repelling point $\tilde{z}(c)$, not at the singular value $c$); the corresponding rays can be obtained in the exponential dynamical plane by the same technique described above, and they  can be transfered in parameter plane by Proposition \ref{Correspondence of bifurcations}.    

\end{proof}
Note that this proposition  proves that we can separate a Misiurewicz parameter from all other Misiurewicz parameters, and from any parameter which is described combinatorially, for example parabolic and escaping parameters and landing points of parameter rays.

%%%%%%%%%%%%%%%%%%%%%%%%%%%%%Aggiungere disegno!!!!!!!!
\begin{rem}
By the correspondence of characteristic ray pairs between polynomials and exponentials as stated in Theorem \ref{Correspondence of characteristic rays}, we could have obtained the combinatorial approximation directly in the parameter plane, but we need it also in dynamical plane in order to prove that dynamical fibers of the postsingular orbit are trivial and to proceed with the topological part of the proof. 
\end{rem}

%{\bf Proof of Proposition \ref{Triviality in dynamical plane}: Triviality in dynamical plane}
\begin{proof}[Proof of Proposition \ref{Triviality in dynamical plane}: Triviality in dynamical plane]
Let $z$ be the first periodic point in the postsingular orbit, and $L$ be a  linearizing neighborhood. Let $k$ be the preperiod of $c_0$ and $m$ be the period of the postsingular periodic orbit, as in the definition of Misiurewicz parameters. Taking the $k$th image of  the approximating dynamic ray pairs found in the proof of  Proposition \ref{Combinatorial approximation of parameter rays} we obtain  dynamic ray pairs which approximate combinatorially the $q$ rays $g_{\sigma^k(s_1)}, \dots, g_{\sigma^k(s_q)}$ landing at $z$. We want to show that this combinatorial separation corresponds to an actual separation of all points in $L$ from $z$.

So for each sector defined by the $g_{\sigma^k(s_i)}$ consider an approximating ray pair which enters $L$. Note that such a ray pair must exist: as the Julia set $J(f_c)=\overline{I(f_c)}$ and $J(f_c)=\C$, any open set contains escaping points, so at least one ray must enter each sector, and once there is a ray inside it can be surrounded by one of the combinatorially approximating ray pairs.

Let $\psi$ be the branch of $f^{-m}$ fixing $z$,  and for a given sector call $V$ the closed region enclosed by the boundary of the sector, the boundary of $L$ and one of the approximating ray pairs entering that sector.  As $V$ does not contain any postsingular point except for $z$, which is fixed by $\psi$, $\psi^n$ is well defined for all $n$, and because we are in $L$ we have that  $\psi^n(V)\ra\{z\}$ as $n\ra\infty$.
 
\end{proof}

%{\bf Proof of Proposition \ref{Persistence of dynamical triviality}: Persistence of dynamical triviality} 
\begin{proof}[Proof of Proposition \ref{Persistence of dynamical triviality}: Persistence of dynamical triviality]
 Let $\{p_i\}_{i=1\dots q}$ 
 be the landing points of the ray pairs which enter the linearizing neighborhood in the proof of Propositon \ref{Triviality in dynamical plane}; then we can find a parameter neighborhood $V$ of $c_0$ in which we can continue analytically both the $p_i$'s and the postsingular periodic orbit $\{z_i\}$ with the same rays landing at them. 
 
Up to shrinking $V$, we can also assume that the rays enter the new linearizing neighborhood, and by contraction under the branch of $f^{-m}$ fixing $\{z_i(c)\}$, the neighborhoods between the approximating rays and the actual rays landing at $z_i(c)$ shrink to points like in the previous proposition.
 \end{proof}
 
 %{\bf Proof of Theorem \ref{Triviality of Misiurewicz fibers}: Triviality of Misiurewicz fibers}
 \begin{proof}[Proof of Theorem \ref{Triviality of Misiurewicz fibers}: Triviality of Misiurewicz fibers]
 Let us find  a parameter neighborhood V of $c_0$ so that every $c\in V$
 can be separated from $c_0$ by a parameter ray pair.
 
  Note that it is enough to separate from $c_0$ any parameter $c$ in the bifurcation locus, as rays cannot cross non-hyperbolic components.
  
 We use Propositions \ref{Triviality in dynamical plane} and \ref{Persistence of dynamical triviality} to show that the combinatorially approximating ray pairs found in Proposition \ref{Combinatorial approximation of parameter rays} converge to the rays landing at $c_0$ in the complex plane, so that the domains which we can separate combinatorially actually fill $V\setminus\cup G_{s_i}$. 
  
Like we did before in dynamical plane, let us distinguish the cases in which the parameter $c$ that we want to separate from $c_0$ is in the external sector which contains $-\infty$ (the one bounded by $G_{s_1}$ and $G_{s_q}$) and the case in which $c$ belongs to some of the other internal sector.

If $c$ belongs to the external sector, consider the dynamical plane for $c_0$, and separate $c$ from $c_0$ there by a preperiodic dynamic ray pair as from Proposition \ref{Triviality in dynamical plane} follows directly that the dynamical fiber of $c_0$ is trivial. Now separate this preperiodic ray pair by by one of the approximating characteristic ray pairs found in Proposition \ref{Combinatorial approximation of parameter rays} and then transfer this characteristic ray pair into parameter plane by Proposition \ref{Correspondence of bifurcations}. Note that $c$ in general does not have a ray landing at it. However the parameter ray pair and $c$ keep the same relative position in parameter plane that had $c$ and the corresponding dynamic rays in $\Pi_{c_0}$ by Proposition \ref{Correspondence of bifurcations}.

If $c$ belongs to one of the internal sectors,  say $\widehat{s_1 s_2}$, and also belongs to the neighborhood $V$ as in Proposition \ref{Persistence of dynamical triviality} then consider the dynamical plane $\Pi_c$. There, $c$ belongs to the corresponding dynamical sector $\widehat{s_1 s_2}$ defined at the analytic continuation $\tilde{z}(c)$. The dynamical fiber of $\tilde{z}$ is trivial by  Proposition \ref{Persistence of dynamical triviality}, so we can separate $c$ and $\tilde{z}(c)$ by some periodic ray pair ($\alpha,\alpha '$). This ray pair is persistent over a parameter neighborhood $U$ of $c$. This means that, for parameters in $U$, in dynamical plane  the  singular value will be inside the sector bounded by the  dynamical rays $(\alpha, \alpha')$. In particular,  by vertical order, escaping parameters in this neighborhood lie on a dynamic ray of address between $\alpha$ and $\alpha'$ in dynamical plane, so they lie on a parameter ray of address  between  $\alpha$ and $\alpha'$. By the combinatorial approximation given by Proposition \ref{Combinatorial approximation of parameter rays}, such a parameter is separated from $c_0$ by any of the ray pairs whose addresses are closer to $s_1$ and $s_2$ than $\alpha$ and $\alpha'$. This means that we can separate all those escaping parameters simultaneously from $c_0$ using the same ray pair $(\beta,\beta')$.  By density of escaping points in the bifurcation locus, we can approximate $c$ by escaping parameters, so the ray pair ($\beta, \beta'$) also separates $c_0$ from $c$ unless $c$ lies on $\beta$ or $\beta'$ in which case it has a well defined address and can be separated from $c_0$ by any ray pair closer than $\beta$ or $\beta'$.
\end{proof}

\begin{small}

%\setcounter{tocdepth}{2}

%\maketitle
%\tableofcontents

\end{small}


\begin{thebibliography}{99}
\bibitem[BR]{BR} No\"el Baker, Phil J. Rippon, \emph{Iteration of exponential functions},   Ann. Acad. Sci. Fenn. Ser. A I Math. 9 (1984), 4--77.%Annales Academia Scientiarum Fennicae Ser. A I {\bf 9} (1984), 49-77
%\bibitem{D0} Robert L. Devaney, \emph{Complex exponential dynamics}, http://math.bu.edu/people/bob/papers/notes.pdf
\bibitem[B et al.]{D} Clara Bodelon, Robert L. Devaney, Lisa R. Goldberg, Michael Hayes, John H. Hubbard, Gareth Roberts, \emph{Dynamical convergence of polynomials to the exponential}, J. Differ. Equations Appl.  6  (2000),  no. 3, 275--307.
%\bibitem{DK} Devaney, Krych, \emph{Dynamics of exp(z)},Ergodic Theory Dynamical Systems 4 (1984), no. 1, pp.35-52
\bibitem[CG]{CG} Lennart Carleson, Theodore W. Gamelin, Complex Dynamics, Springer (1993).
\bibitem[DK]{DK} Robert L. Devaney, Michael Krych, \emph{Dynamics of Exp(z)},  Ergodic Theory Dynam. Systems  4  (1984),  no. 1, 35--52.
\bibitem[DH]{DH} Adrien Douady, John H. Hubbard, \emph{\'Etude dynamique des polyn\^omes complexes}, Pr\'epublications math\'ematiques d'Orsay (1984/1985), no. 2/4.
\bibitem[EL]{EL} Alexandre Eremenko, Mikhail Lyubich, \emph{Dynamical properties of some classes of entire functions},  Ann. Inst. Fourier (Grenoble)  42  (1992),  no. 4, 989--1020. %Annales de l'Institut Fourier (Grenoble) {\bf 42} (1992) no.4, 989-1020
\bibitem[FRS]{FRS1} Markus F\"orster, Lasse Rempe, Dierk Schleicher,\emph{Classification of escaping exponential maps}, Proc. Amer. Math. Soc.  136  (2008),  no. 2, 651--663. % Proceedings of the  American Mathematical Society  {\bf 136}  (2008), 651-663
\bibitem[FS]{FS} Markus F\"orster, Dierk Schleicher, \emph{Parameter rays in the space of exponential maps},  Ergodic Theory Dynam. Systems  29  (2009),  no. 2, 515--544.  % Ergodic Theory and Dynamical Systems (2009) {\bf 29},515-544 Cambridge University Press
\bibitem[GM]{GM} Lisa Goldberg, John Milnor, \emph{Fixed points of polynomial maps. Part II. Fixed point portraits}, Ann. Sci. École Norm. Sup. (4)  26  (1993),  no. 1, 51--98. % Annales scientifiques de l'École Normale Supérieure Sér. 4, 26 no. 1 (1993), p. 51-98 
\bibitem[HSS]{HSS} John H. Hubbard, Dierk Schleicher, Mitsuhiro Shishikura,\emph{ Exponential Thurston maps and limits of quadratic differentials},
J. Amer. Math. Soc. 22 (2009), no. 1, 77--117. 
\bibitem[LSV]{LSV} Bastian Laubner, Dierk Schleicher, Vlad Vicol, \emph{A combinatorial classification of postsingularly finite complex exponential maps}, Discrete Contin. Dyn. Syst.  22  (2008),  no. 3, 663-682.%Discrete and Continuous Dynamical Systems {\bf 22} (2008), 663-682
\bibitem[McM]{Mc} Curtis McMullen, \emph{Complex Dynamics and Renormalization}, Annals of Mathematics Studies (1994), Princeton University Press.  
\bibitem[Mi1]{Mi} John Milnor, \emph{Periodic orbits, external rays  and the Mandelbrot set: an expository account}, Ast\'erisques 261 (2000), xiii, 277--333.
\bibitem[Mi]{M} John Milnor, \emph{Dynamics in one complex variable}, Princeton University Press (2006).
%\bibitem[Po]{Po} Alfredo Poirier, \emph{On Post Critically Finite Polynomials Part Two: Hubbard Trees}, SUNY Stony Brook IMS Preprint (1993/7)
\bibitem[PR]{PR} Carsten L. Petersen, Gustav Ryd, \emph{Convergence of rational rays in parameter spaces}, in: Tan Lei et al., \emph{The Mandelbrot Set: Theme and Variations}, Cambridge University Press (2000).
\bibitem[R0]{R0} Lasse Rempe, \emph{Dynamics of exponential maps}, Doctoral Thesis, Christian-Albrechts-Universitat Kiel (2003), http://e-diss.uni-kiel.de/diss781/.
\bibitem[R1]{R1} Lasse Rempe, \emph{A landing theorem for periodic rays of exponential maps}, Proc. Amer. Math. Soc.  134  (2006),  no. 9, 2639--2648. % Proceedings of the American Mathematical Society {\bf 134}, pp.2639-2648
\bibitem[R2]{R2} Lasse Rempe, \emph{Topological dynamics of exponential maps},  Ergodic Theory Dynam. Systems  26  (2006),  no. 6, 1939--1975. 
%Ergodic Theory and Dynamical Systems  {\bf 26} (2006), 1939-1975 
\bibitem[RS1]{RS2}  Lasse Rempe, Dierk Schleicher, \emph{Combinatorics of bifurcations in exponential parameter space}, in: Phil Rippon, Gwyneth Stallard, \emph{Transcendental Dynamics and Complex Analysis}, London Mathematical Society Lecture Note Series 348 Cambridge University Press (2008),317–370; arXiv:math/0408011 [math.DS].
\bibitem[RS2]{RS3}  Lasse Rempe, Dierk Schleicher, \emph{Bifurcation loci of exponential maps and quadratic polynomials: local connectivity, triviality of fibers and density of hyperbolicity}, 	in: Mikhail Lyubich et al, Fields Institute Communications Volume 53: \emph{Holomorphic dynamics and renormalization. A volume in honor of John Milnor's 75th birthday} (2008), 177-196;  arXiv:0805.1658 [math.DS].
%\bibitem{RvS}  Lasse Rempe, Sebastian Van Strien, \emph{Absence of line fields and Mane's theorem for non-recurrent transcendental functions}, arXiv:0802.0666 [math.DS]
%\bibitem{ST} Mitsuhiro Shishikura, Tan Lei \emph{An alternative proof of Mane's theorem on non-expanding Julia sets}, in: Tan Lei et al., \emph{The Mandelbrot Set: Theme and Variations}, Cambridge University Press (2000)
\bibitem[S0]{S3} Dierk Schleicher, \emph{On the dynamics of iterated exponential maps}, Habilitation Thesis, TU Munchen (1999).
\bibitem[S1]{S1} Dierk Schleicher, \emph{On fibers and local connectivity of Mandelbrot and Multibrot sets}, Fractal geometry and applications: a jubilee of Benoit Mandelbrot. Part 1,  477--517,
Proc. Sympos. Pure Math., 72, Part 1, Amer. Math. Soc., Providence, RI, (2004). % Proceedings of Symposia in Pure mathematics {\bf 72}  (2004)
%\bibitem{S2} Dierk Dierk Schleicher\emph{Attracting Dynamics of exponential maps}, Ann. Acc. Sc. Fenn. Vol 28, 2003, 3-34 
\bibitem[SZ1]{SZ2} Dierk Schleicher, Johannes Zimmer, \emph{Escaping points of exponential maps},  J. London Math. Soc. (2)  67  (2003),  no. 2, 380--400.  %Journal of the London Mathematical Society {\bf 67} (2003), 380-400 %construct of rays for exps
\bibitem[SZ2]{SZ3} Dierk Schleicher, Johannes Zimmer, \emph{Periodic Points and dynamic rays of exponential maps},  Ann. Acad. Sci. Fenn. Math.  28  (2003),  no. 2, 327--354. %Annales Accademiae Scientiarum Fennicae {\bf 28} (2003), 327-354
%\bibitem[TL]{TL} Tan Lei, \emph{Voisinages connexes des points de Misiurewicz}, Annales Institut Fourier {\bf 42} N.4 (1992), 707-735
\end{thebibliography}
\end{document}